\documentclass[a4paper,11pt,leqno]{article}

\usepackage{amsfonts,latexsym,amsmath,amssymb,amsthm}
\usepackage{fullpage}
\usepackage{dsfont}
\usepackage{hyperref}
\newtheorem{theorem}{Theorem}[section]
\newtheorem{lemma}[theorem]{Lemma}
\newtheorem{proposition}[theorem]{Proposition}
\newtheorem{corollary}[theorem]{Corollary}

\newtheorem{example}[theorem]{Example}
\theoremstyle{definition}

\numberwithin{equation}{section}

\newcommand{\ls}{\leqslant}
\newcommand{\gr}{\geqslant}
\newcommand{\E}{\mathbb{E}}
\newcommand{\Pp}{\mathbb{P}}
\newcommand{\R}{\mathbb{R}}
\newcommand{\conv}{\operatorname{conv}}
\newcommand{\vrad}{{\rm vrad}}

\usepackage{setspace}
\begin{document}
\small

\title{\bf Negative moments of the support function and applications to isotropic convex bodies}
\author{Antonios Hmadi and Dimitrios-Marios Liakopoulos}
\date{}
\maketitle

\begin{abstract}\footnotesize
We study lower tails of support functions of isotropic convex bodies.
An endpoint negative moment estimate, combined with a spherical cap argument, gives polynomial deviation estimates for the inradius of random projections in every dimension; in the unconditional class this yields the optimal order of the median inradius and shows that the cube is extremal.
The same argument gives estimates for convex hulls of independent rotations and, after an additional projection, a mixed rotation--projection theorem.
We also prove weighted lower and upper estimates for Minkowski averages of rotated polars and their random projections, as well as an inradius dependent estimate for the geometric distance of global averages from the Euclidean ball.
A family of isotropic product cylinders shows that the latter estimate is sharp, up to absolute constants, throughout the possible range of the inradius.
For the cube, an explicit negative moment computation gives a quantitative projected $(m,k,n)$-profile and, in full dimension, recovers the classical order $n/\ln(en)$ for bounded geometric distance.
Finally, we determine the sharp volume profile of intersections of independent rotations in the unconditional isotropic class and show that the cube is extremal.
\end{abstract}

\section{Introduction}\label{section:1}

The geometry of an isotropic convex body is often studied through the distribution of its norm, support function and radial function in random directions.
Random projections and random rotations provide two natural ways of revealing Euclidean structure, while negative moments measure the lower tail which is not seen by the usual concentration inequalities.
The purpose of this paper is to combine these points of view.
Our main results concern small values of the support function, inradii of random projections, intersections of independently rotated isotropic convex bodies, Minkowski averages and the geometric distance of global averages from the Euclidean ball in the isotropic position.

Let $K\subseteq\R^n$ be a convex body with the origin in its interior.
We denote by $h_K$ and $p_K$ its support function and Minkowski functional.
For $q\neq0$ we set
$$ M_q(K)=\left(\int_{S^{n-1}}p_K(\theta)^q\,d\sigma(\theta)\right)^{1/q},\qquad w_q(K)=\left(\int_{S^{n-1}}h_K(\theta)^q\,d\sigma(\theta)\right)^{1/q}, $$
and write $M(K)=M_1(K)$, $w(K)=w_1(K)$ and $b(K)=\max_{S^{n-1}}p_K$.

Polar integration gives
$$ w_{-n}(K)^{-n}=\frac{|K^\circ|}{\omega_n}, \qquad \vrad(K)^n=\frac{|K|}{\omega_n}, $$
where $\vrad(K)$ is the volume radius of $K$, and therefore
$$ \left(\frac{w_{-n}(K)}{\vrad(K)}\right)^n =\frac{\omega_n^2}{|K|\,|K^\circ|}. $$
If $K$ is centered, the Blaschke--Santal\'o inequality~\cite{BGVV-book} gives
\begin{equation}\label{eq:intro-santalo-negative-support}
 w_{-n}(K)\gr\vrad(K),\qquad
 \sigma\{\theta:h_K(\theta)\ls t\,\vrad(K)\}\ls\min\{1,t^n\},\quad t>0.
\end{equation}
Equality in the first inequality holds exactly for ellipsoids centered at the origin.
In particular, if $|K|=1$, then
\begin{equation}\label{eq:intro-small-support}
 \sigma\{\theta:h_K(\theta)\ls t\sqrt n\}\ls\min\{1,(Ct)^n\},\qquad t>0.
\end{equation}
This is the endpoint power mean form of the volume product inequality; related formulations appear in Santal\'o~\cite{Santalo-1949} and Lutwak~\cite{Lutwak-1983}.

For $0<\vartheta<1$ put
$$ d_\ast(K,\vartheta)=\min\left\{-\ln\sigma\{h_K\ls\vartheta w(K)\},n\right\}. $$
The parameter $d_\ast(K,1/2)$ was introduced by Klartag and Vershynin~\cite{Klartag-Vershynin-2007}.
The endpoint estimate has the following consequence.

\begin{theorem}\label{th:dstar}
Let $K\subseteq\R^n$ be a centered convex body of volume one.
Then, for every $0<\vartheta<1$,
$$ d_\ast(K,\vartheta)\gr \min\left\{n,n\max\left\{0,\ln\left(\frac{c_1\sqrt n}{\vartheta w(K)}\right)\right\}\right\}. $$
In particular, $d_\ast(K,\vartheta)=n$ whenever
$$ \vartheta\ls c_2\frac{\sqrt n}{w(K)}. $$
\end{theorem}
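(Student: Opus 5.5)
We deduce the statement directly from the small-ball estimate \eqref{eq:intro-small-support}, which is the Blaschke--Santal\'o inequality \eqref{eq:intro-santalo-negative-support} in the normalization $|K|=1$. In that normalization $\vrad(K)=\omega_n^{-1/n}\simeq\sqrt n$. Concretely, for every $t>0$ we have $\sigma\{h_K\ls t\sqrt n\}\ls\min\{1,(C t)^n\}$, where $C=\omega_n^{-1/n}/\sqrt n$ is bounded by an absolute constant.

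Fix $0<\vartheta<1$ and apply this bound with $t=\vartheta w(K)/\sqrt n$. This gives
$$ \sigma\{h_K\ls\vartheta w(K)\}\ls\min\left\{1,\left(\frac{C\vartheta w(K)}{\sqrt n}\right)^n\right\}. $$
Taking $-\ln$ yields
$$ -\ln\sigma\{h_K\ls \vartheta w(K)\}\gr n\max\left\{0,\ln\left(\frac{\sqrt n}{C\vartheta w(K)}\right)\right\}, $$
so the first assertion holds with $c_1=1/C$.

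For the ``in particular'' part we have to be careful, because $\ln$ of the ratio must be at least $1$ in order for the minimum to equal $n$. This happens as soon as $c_1\sqrt n/(\vartheta w(K))\gr e$, that is, whenever $\vartheta\ls c_2\sqrt n/w(K)$ with $c_2=c_1/e$. In that case the lower bound is at least $n$, and since $d_\ast\ls n$ by definition, we get $d_\ast(K,\vartheta)=n$.

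One point should be checked. When $\sigma\{h_K\ls\vartheta w\}=0$, we read $-\ln 0=+\infty$, and the minimum in the definition of $d_\ast$ is then $n$, which is consistent with the bound.

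The only genuine input is the Blaschke--Santal\'o step \eqref{eq:intro-santalo-negative-support}, which is a Markov inequality applied to $h_K^{-n}$. We have $\int h_K^{-n}\,d\sigma=|K^\circ|/\omega_n\ls\omega_n/|K|=\omega_n$. Hence
$$ \sigma\{h_K\ls s\}\ls s^n\,\omega_n=\left(\frac{s}{\omega_n^{-1/n}}\right)^n, $$
and we use $\omega_n^{-1/n}\simeq\sqrt n$. This step is routine, so there is no real obstacle. The only care needed is in tracking the constants, and in noting that $w(K)\gr \vrad(K)\simeq c\sqrt n$ by Urysohn's inequality, so that the hypothesis in the second part is non-vacuous only for $\vartheta$ bounded by a constant.
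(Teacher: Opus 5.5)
Your argument is correct and is essentially the paper's own proof: apply \eqref{eq:intro-small-support} with $t=\vartheta w(K)/\sqrt n$, take $-\ln$, and then shrink the constant (your $c_2=c_1/e$) so that the logarithm is at least $1$. One harmless slip: the constant in \eqref{eq:intro-small-support} is $C=\sqrt n\,\omega_n^{1/n}$, as your own Markov computation shows, not its reciprocal $\omega_n^{-1/n}/\sqrt n$; both are absolute constants, so nothing in the argument changes.
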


For isotropic convex bodies, the optimal mean-width estimate of Bizeul~\cite{Bizeul-mmstar-2026}, after rescaling from covariance-one normalization, gives $w(K)\ls CL_K\sqrt{n\ln(en)}$. Together with the slicing theorem of Klartag and Lehec~\cite{KL}, this implies $w(K)\ls C\sqrt{n\ln(en)}$, and hence $d_\ast(K,\vartheta)=n$ whenever $\vartheta\ls c/\sqrt{\ln(en)}$.

\smallskip 

Our first projection result follows from an estimate of Klartag and Vershynin~\cite{Klartag-Vershynin-2007} relating negative moments of projection inradii to negative moments of the support function, together with the estimate of Pajor and Tomczak-Jaegermann~\cite{Pajor-Tomczak-Jaegermann-1986} for the low-$M^\ast$ theorem.

\begin{theorem}\label{th:inradius}
There exist absolute constants $c,c'>0$ such that the following holds.
Let $K\subseteq\R^n$ be a symmetric convex body of volume one and let $1\ls k<(n-1)/4$.
Then a Haar-distributed $F\in G_{n,k}$ satisfies
$$ r(P_FK)\gr c\max\left\{\frac n{w(K)},\frac1{M(K)}\right\} $$
with probability at least $1-e^{-c'k}$, where $r(P_FK)$ denotes the inradius of $P_FK$.
\end{theorem}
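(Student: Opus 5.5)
The plan is to prove the two lower bounds separately, each with probability at least $1-e^{-ck}$, and then take a union bound, adjusting constants. Throughout I use the duality $(P_FK)^\circ=K^\circ\cap F$ (polar taken inside $F$), so that
$$ r(P_FK)=\min_{\theta\in S_F}h_K(\theta)=\frac{1}{\max_{\theta\in S_F}p_{K^\circ}(\theta)^{-1}\cdots} = \frac{1}{\operatorname{diam}(K^\circ\cap F)/2}, $$
that is, the inradius of the projection is the reciprocal of the circumradius of the section of the polar.

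\emph{The bound $1/M(K)$.} I would apply the low-$M^\ast$ estimate of Pajor and Tomczak-Jaegermann to the symmetric body $K^\circ$. For a random subspace $F$ of dimension $k$, the codimension is $n-k\gr 3n/4$. Their theorem gives, with probability at least $1-e^{-c(n-k)}$,
$$ \operatorname{diam}(K^\circ\cap F)\ls C\sqrt{\frac{n}{n-k}}\,w(K^\circ). $$
Since $h_{K^\circ}=p_K$, we have $w(K^\circ)=M(K)$. The factor $\sqrt{n/(n-k)}$ is bounded by an absolute constant, and $e^{-c(n-k)}\ls e^{-ck}$. Hence $r(P_FK)\gr c/M(K)$ with the required probability.

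\emph{The bound $n/w(K)$.} Here I would use the endpoint negative moment \eqref{eq:intro-santalo-negative-support}. Since $|K|=1$, it gives $w_{-n}(K)\gr \vrad(K)\gr c\sqrt n$, and hence $w_{-q}(K)\gr c\sqrt n$ for every $q\ls n$.

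The Klartag--Vershynin estimate bounds a negative moment of projection inradii by a negative moment of the support function. I expect it in the form
$$ \left(\E\, r(P_FK)^{-q}\right)^{-1/q}\gr c\, w_{-q'}(K)\qquad\text{for suitable } q\simeq k,\ q'\simeq n, $$
valid in the range $k<(n-1)/4$; this range restriction is presumably where the hypothesis of the theorem comes from. Applying Markov's inequality to $r(P_FK)^{-q}$ with $q\simeq k$ then gives $r(P_FK)\gr c\sqrt n$ with probability at least $1-e^{-c'k}$.

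Urysohn's inequality gives $w(K)\gr\vrad(K)\gr c\sqrt n$, so $n/w(K)\ls C\sqrt n$. Therefore $r(P_FK)\gr c''\,n/w(K)$ on the same event.

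If the Klartag--Vershynin result is only available in its $d_\ast$ form, I would take $\vartheta=c_2\sqrt n/w(K)$, which is less than $1$ by Urysohn after shrinking $c_2$. Theorem~\ref{th:dstar} then gives $d_\ast(K,\vartheta)=n\gr k$. Their theorem yields, with probability at least $1-e^{-ck}$, that $h_K\gr c\vartheta w(K)=c\sqrt n$ on $S_F$, which is the same conclusion.

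\emph{Main obstacle.} I expect the main difficulty to be matching the exact form and parameter range of the Klartag--Vershynin estimate. Specifically, one has to check that with $q\simeq k$ the moment comparison loses only an absolute constant and that the resulting tail is $e^{-c'k}$. This requires choosing the moment exponent carefully, so that the exponential factor from the spherical cap argument is absorbed under the assumption $k<(n-1)/4$.
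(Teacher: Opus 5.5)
\textbf{The $1/M(K)$ bound.} Your argument here is exactly the paper's: the low-$M^\ast$ estimate applied to $K^\circ$, $w(K^\circ)=M(K)$, and $e^{-c(n-k)}\ls e^{-ck}$.

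\textbf{The $n/w(K)$ bound has a genuine gap.} You route it through the intermediate claim that $r(P_FK)\gr c\sqrt n$ with probability $1-e^{-c'k}$, followed by Urysohn. That intermediate claim is false for general symmetric bodies of volume one. Take $K=[-N,N]\times[-\varepsilon,\varepsilon]^{n-1}$ with $2N(2\varepsilon)^{n-1}=1$. For $k\gr2$, every $F\in G_{n,k}$ contains a unit vector $\theta\perp e_1$, and
\[
h_K(\theta)=\varepsilon\|\theta\|_1\ls\varepsilon\sqrt n=\frac{\sqrt n}{2}(2N)^{-1/(n-1)},
\]
which tends to $0$ as $N\to\infty$. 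So $r(P_FK)$ is tiny for \emph{every} $F$, although $w_{-n}(K)\gr c\sqrt n$ by Santal\'o. Hence the form you expect from Klartag--Vershynin, $(\E\, r(P_FK)^{-q})^{-1/q}\gr c\,w_{-q'}(K)$ with $q'\simeq n$, cannot hold. Your $d_\ast$ fallback fails for the same reason: any statement yielding $h_K\gr c\vartheta w(K)=c\sqrt n$ on $S_F$ at $\vartheta\simeq\sqrt n/w(K)$ would again give $r(P_FK)\gr c\sqrt n$, which this example rules out. The step you flagged as the main obstacle, that the moment comparison loses only an absolute constant, is precisely what fails.

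\textbf{The missing ingredient.} It is the actual shape of the Klartag--Vershynin estimate: for symmetric $K$, $1\ls q<n$ and $1\ls k<q/4$,
\[
\left(\int_{G_{n,k}}r(P_FK)^{-k}\,d\nu_{n,k}(F)\right)^{1/k}\ls C\,\frac{w(K)}{w_{-q}(K)^2}.
\]
The loss is the factor $w(K)/w_{-q}(K)$, not an absolute constant. This factor is what produces $n/w(K)$ instead of $\sqrt n$; in the $d_\ast$ language, the conclusion at level $\vartheta\simeq\sqrt n/w(K)$ is of order $\vartheta^2w(K)$, not $\vartheta w(K)$.

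\textbf{How to finish.} Take $q=n-1$; this is where the restriction $k<(n-1)/4$ comes from. Then
\[
w_{-(n-1)}(K)\gr w_{-n}(K)\gr\omega_n^{-1/n}\gr c\sqrt n
\]
gives $(\E\, r(P_FK)^{-k})^{-1/k}\gr cn/w(K)$ directly. Markov's inequality applied to $r(P_FK)^{-k}$ then yields $r(P_FK)\gr cn/w(K)$ with probability at least $1-e^{-k}$. No Urysohn step is needed. Intersecting this event with your low-$M^\ast$ event completes the proof.
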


In particular, if $K$ is symmetric and isotropic, then \eqref{eq:isotropic-spherical-means} gives
$$ r(P_FK)\gr c\sqrt{\frac n{\ln(en)}} $$
with probability at least $1-e^{-c'k}$ in the range of Theorem~\ref{th:inradius}.

We also obtain two all dimensional projection results.
Random sections of isotropic convex bodies were studied in~\cite{Giannopoulos-Hioni-Tsolomitis-2016}; high-probability regular positions and random Gelfand numbers were developed in~\cite{Milman-Yifrach-2021}.
Let $K\subseteq\R^n$ be isotropic, let $1\ls k<n$, let $F$ be Haar-distributed on $G_{n,k}$ and put $ \Delta=n-k+1. $

\begin{theorem}\label{th:projection-tail}
There exist absolute constants $c,C>0$ such that, for every $\tau\gr1$,
\begin{equation}\label{eq:projection-tail-all-dimensional}
 \Pp\left\{r(P_FK)<\frac{c\sqrt n}{\tau}\exp\left(-\frac{Ck}{\Delta}\right)\right\} \ls\tau^{-\Delta}.
\end{equation}
In particular, writing $d=n-k$, for every $0<\delta<1$ there is $c_\delta>0$ such that, whenever $d\gr\delta n$,
$$ \Pp\left\{P_FK\not\supseteq\frac{c_\delta\sqrt n}{\tau}B_F\right\} \ls\tau^{-(d+1)}. $$
For $d\gr n/2$, the constant $c_\delta$ may be chosen absolute.
\end{theorem}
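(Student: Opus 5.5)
The plan is to turn the global small-ball estimate \eqref{eq:intro-small-support} for $h_K$ into a tail bound for $r(P_FK)=\min_{\theta\in S_F}h_K(\theta)$. We use that $h_{P_FK}=h_K$ on $F$, integrate over the Grassmannian, and apply a spherical cap argument inside $F$. An isotropic body is centered and has volume one, so \eqref{eq:intro-small-support} gives
$$\sigma\{\theta\in S^{n-1}:h_K(\theta)\ls 2s\sqrt n\}\ls (Cs)^n .$$
Rotation invariance of Haar measure gives the averaging identity
$$\E_F\,\sigma_F\{\theta\in S_F:h_K(\theta)\ls u\}=\sigma\{h_K\ls u\},$$
where $\sigma_F$ is the uniform measure on $S_F$. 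The structure of the argument is a Markov-type inequality: on the event $\{r(P_FK)<t\}$ we show that $\sigma_F\{h_K\ls 2t\}\gr \beta(t)$ for some explicit $\beta$. Then
$$\Pp\{r(P_FK)<t\}\ls \beta(t)^{-1}(Ct/\sqrt n)^n .$$

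\emph{Cap step.} Fix $F$ with $r(P_FK)<t$ and pick $\theta_0\in S_F$ with $h_K(\theta_0)<t$. For $v\in S_F\cap\theta_0^\perp$ and small $\alpha$, put $u=\cos\alpha\,\theta_0+\sin\alpha\,v$. Sublinearity of $h_K$ gives
$$h_K(u)\ls t+\alpha\, h_K(v).$$
Hence every $u$ with $v$ in the set $V_A=\{v:h_K(v)\ls A\}$ and $\alpha\ls t/A$ satisfies $h_K(u)\ls 2t$. The measure of this set is at least $\sigma_{F\cap\theta_0^\perp}(V_A)\cdot(ct/A)^{k-1}$, by the standard lower bound for caps in $S^{k-1}$ written in polar coordinates around $\theta_0$. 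The choice of $A$ is where the isotropic normalization enters. The crude choice $A=R(K)\ls CnL_K$ loses a factor $n^{(k-1)/2}$. Instead we take $A$ of order $\sqrt n$, so that $V_A$ carries at least half of the measure of $S_{F\cap\theta_0^\perp}$. Then
$$\beta(t)\gr \tfrac12\,(ct/\sqrt n)^{k-1}.$$

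\emph{Conclusion.} With $t=c\sqrt n\,s$ the Markov bound becomes
$$\Pp\{r(P_FK)<c\sqrt n\,s\}\ls C^{n}s^{n}\,(C/s)^{k-1}=(C_1^{n/\Delta}s)^{\Delta}.$$
Since $n/\Delta=1+(k-1)/\Delta$, the factor $C_1^{n/\Delta}$ equals $C_1 e^{C'(k-1)/\Delta}$. Choosing $s=\tau^{-1}C_1^{-1}e^{-C'k/\Delta}$ gives \eqref{eq:projection-tail-all-dimensional}. For the corollary, $d=n-k\gr\delta n$ gives $k/\Delta\ls 1/\delta$, so $e^{-Ck/\Delta}$ is a constant depending only on $\delta$. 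We absorb it into $c_\delta$ and note that $\Delta=d+1$. When $d\gr n/2$ we have $k/\Delta\ls1$, so $c_\delta$ is absolute.

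\emph{Main obstacle.} The hard step is justifying $A\approx C\sqrt n$ uniformly in $\theta_0$. The median of $h_K$ on a random subsphere is a priori only bounded by $w(K)\ls C\sqrt{n\ln(en)}$, and a spare $\sqrt{\ln n}$ per dimension would ruin the $\exp(-Ck/\Delta)$ form. The first thing I would try is to avoid a uniform $A$ altogether and keep $h_K(v)$ inside the integral. Averaging over the cap gives
$$\sigma_F\{h_K\ls 2t\}\gtrsim t^{k-1}\int_{S_{F\cap\theta_0^\perp}}h_K(v)^{-(k-1)}\,d\sigma(v),$$
which is a negative moment of the support function. Integrating over $F$ and $\theta_0$ should then reduce everything to a bound of the type $w_{-n}(K)\gr\vrad(K)$, i.e. to \eqref{eq:intro-santalo-negative-support}, which holds without logarithmic loss. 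If that fails, the fallback is to restrict to $v$ with $h_K(v)\ls C\sqrt n\,e^{Ck/\Delta}$ and lower-bound the measure of this set using Theorem~\ref{th:dstar} applied inside $F$. The exponential factor is then absorbed into the stated loss.
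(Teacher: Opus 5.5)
Your skeleton is the paper's: Markov's inequality for $\sigma_F\{h\ls 2t\}$ on the bad event, the incidence identity, the Santal\'o small-ball bound, and the bookkeeping $n/\Delta=1+(k-1)/\Delta$. However, the step you flag as the main obstacle is a genuine gap. With a uniform $A$ it cannot be repaired while you work with $K$ itself.

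The claim that $\{v:h_K(v)\ls C\sqrt n\}$ fills half of $S_{F\cap\theta_0^\perp}$ is false in general. Take the isotropic regular simplex. There $R(K)\simeq n$ and $h_K\simeq 1$ at a facet normal $\theta_0$. On the other hand, $\{h_K\ls C\sqrt n\}$ has exponentially small measure both in $S^{n-1}$ and in $S^{n-1}\cap\theta_0^\perp$, where the typical value of $h_K$ is $\simeq\sqrt{n\ln n}$. Every $F\ni\theta_0$ lies in the bad event $\{r(P_FK)<t\}$ once $t\gr C$. For most such $F$, your set $V_A$ with $A\simeq\sqrt n$ is a negligible part of $S_{F\cap\theta_0^\perp}$. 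You are therefore forced to take $A\gtrsim\sqrt{n\ln n}$, which costs $(\ln n)^{(k-1)/(2\Delta)}$ in the radius.

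Both fallbacks use inequalities in the wrong direction. The bound $w_{-n}(K)\gr\vrad(K)$ and Theorem~\ref{th:dstar} are \emph{upper} bounds for the measure of directions where $h_K$ is small. Your cap step needs \emph{lower} bounds: for $\int_{S_E}h_K^{-(k-1)}\,d\sigma_E$ in the first fallback, and for $\sigma_E\{h_K\ls A\}$ in the second. Here $E=F\cap\theta_0^\perp$ is an $F$-dependent, adversarially placed subspace. Such a lower bound would be a reverse-Santal\'o-type statement about $P_EK$, uniform in $E$. Averaging over $F$ does not supply it, because this quantity sits in the denominator of your Markov step.

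The missing idea is to change the body rather than the cap. Lemma~\ref{lem:recentered-core} truncates $K$ at radius $C_0\sqrt nL_K$, then recenters and renormalizes. This produces a centered volume-one body $\widehat K$ with $R(\widehat K)\ls C\sqrt n$ and $\widehat K\subseteq\lambda_nK$, where $\lambda_n\ls1+Ce^{-c\sqrt n}$. Paouris' deviation inequality controls both the lost volume and the barycenter shift, and $r(K)\gr cL_K$ absorbs the shift.

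Two facts then close the argument. First, $r(P_FK)\gr\lambda_n^{-1}r(P_F\widehat K)$. Second, the Santal\'o small-ball bound holds for $\widehat K$. Your cap step now goes through with the crude uniform choice $A=R(\widehat K)\ls C\sqrt n$, since $h_{\widehat K}(u)\ls h_{\widehat K}(\theta_0)+R(\widehat K)\,|u-\theta_0|$.

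The paper carries out exactly this in polar form. It uses $\widehat K^\circ\supseteq\frac c{\sqrt n}B_2^n$, Lemma~\ref{lem:cap-circumradius}, and Corollary~\ref{cor:master-polynomial} with $a\simeq r\simeq n^{-1/2}$, $\beta=n$, $d=k-1$. After that, the rest of your computation applies unchanged.
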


A second argument, based on Gordon's theorem~\cite{Gordon-1988}, gives, with probability at least $1-Ce^{-cd}$, the complementary bound
$$ r(P_FK)\gr c\sqrt{\frac d{\ln(en)}}, \qquad d=n-k. $$
In the unconditional class the optimal order of the median inradius can be determined.
Let $\mathcal I_n^{\rm unc}$ be the class of volume-one unconditional isotropic bodies and, for $1\ls d<n$ and Haar-distributed $F\in G_{n,n-d}$, set
$$ \mathfrak r_{n,d}^{\rm unc} =\sup\left\{s>0:\inf_{K\in\mathcal I_n^{\rm unc}} \Pp\{r(P_FK)\gr s\}\gr\frac12\right\}. $$

\begin{theorem}\label{th:unconditional-extremal-profile}
For every $1\ls d<n$,
\begin{equation}\label{eq:unconditional-extremal-profile-estimate}
 \mathfrak r_{n,d}^{\rm unc} \simeq\max\left\{1,\sqrt{\frac d{\ln(en/d)}}\right\}.
\end{equation}
In particular, the volume-one cube is extremal, up to absolute constants, among unconditional isotropic bodies in every projection dimension.
\end{theorem}

We next turn to independent random rotations.
The comparison between sections and intersections of a few rotations originates in~\cite{Milman-1991,Giannopoulos-Milman-1997}; quantitative local-to-global forms were obtained in~\cite{Vershynin-2006}.
The theorem below follows from a mixed rotation--projection statement proved in Section~\ref{section:4}.

\begin{theorem}\label{th:m-rotation-main}
There exist absolute constants $c,C>0$ such that the following holds.
Let $n\gr2$ and $m\gr2$, let $K_1,\ldots,K_m\subseteq\R^n$ be isotropic convex bodies and let $U_1=I_n,U_2,\ldots,U_m$ be independent Haar-distributed orthogonal transformations.
Put $ q_m=(m-1)n+1. $
Then, for every $\tau\gr1$,
$$ \Pp\left\{R\left(\bigcap_{j=1}^mU_jK_j^\circ\right)>\frac{C\tau}{\sqrt n}\right\} \ls\tau^{-q_m}. $$
Equivalently, with probability at least $1-\tau^{-q_m}$,
\begin{equation}\label{eq:m-rotation-hull}
 \conv\{U_1K_1,\ldots,U_mK_m\} \supseteq\frac{c\sqrt n}{\tau}B_2^n.
\end{equation}
\end{theorem}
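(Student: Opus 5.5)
The plan is a first-moment (Fubini) argument on the sphere, combined with a spherical cap argument that passes from ``small at one direction'' to ``small on a cap''. Write $L=\conv\{U_1K_1,\ldots,U_mK_m\}$, so that $h_L(\theta)=\max_j h_{K_j}(U_j^{-1}\theta)$ and $L^\circ=\bigcap_jU_jK_j^\circ$. The equivalence of the two formulations is then immediate: $R(L^\circ)\ls 1/r$ iff $L\supseteq rB_2^n$ iff $h_L\gr r$ on $S^{n-1}$. I will therefore bound $\Pp\{\exists\theta_0:\ h_L(\theta_0)<r\}$ with $r=c\sqrt n/\tau$.

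\emph{Step 1 (averaged small-ball bound).} Set
$$X=\sigma\{\theta:h_L(\theta)\ls 2r\}.$$
By Fubini,
$$\E X=\int_{S^{n-1}}\Pp\{h_{K_j}(U_j^{-1}\theta)\ls 2r\ \forall j\}\,d\sigma(\theta).$$
For fixed $\theta$, the vectors $U_j^{-1}\theta$, $j\gr2$, are independent and uniform on $S^{n-1}$. Hence each factor with $j\gr 2$ is at most $\sigma\{h_{K_j}\ls 2r\}$, which by \eqref{eq:intro-small-support} (isotropic bodies are centered and of volume one) is at most $(2Cc/\tau)^n$. The remaining factor $j=1$ is deterministic, but integrating it over $\theta$ contributes $\sigma\{h_{K_1}\ls 2r\}\ls(2Cc/\tau)^n$ as well. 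Altogether,
$$\E X\ls (C_1c/\tau)^{mn}.$$

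\emph{Step 2 (spherical cap).} Suppose $h_L(\theta_0)<r$ for some $\theta_0$. By subadditivity,
$$h_L(\theta)\ls h_L(\theta_0)+|\theta-\theta_0|\,R(L)\qquad\text{for all }\theta\in S^{n-1}.$$
Since each isotropic $K_j$ lies in $C n L_{K_j}B_2^n\subseteq C'nB_2^n$ by the slicing theorem~\cite{KL}, we have $R(L)\ls C'n$. Consequently the whole cap of angular radius $\varepsilon\simeq r/n$ around $\theta_0$ lies in $\{h_L\ls 2r\}$, and so $X\gr(c_2\varepsilon)^{n-1}$. Markov's inequality then gives
$$\Pp\{L\not\supseteq rB_2^n\}\ls (C_1c/\tau)^{mn}(c_2\varepsilon)^{-(n-1)}.$$
The exponent count $mn-(n-1)=(m-1)n+1=q_m$ is exactly the one in the statement, which strongly suggests this is the intended mechanism. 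Choosing $c$ small makes the resulting base of the $\tau$-power at most $1$.

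\emph{Main obstacle.} With the crude value $\varepsilon\simeq r/n\simeq c/(\tau\sqrt n)$, the Markov bound becomes
$$(C_1c)^{mn}\,(c_2c)^{-(n-1)}\,\tau^{-q_m}\,n^{(n-1)/2}.$$
The factor $n^{(n-1)/2}$ cannot be absorbed by absolute constants. So the cap step must use an effective radius of order $\sqrt n$ rather than $R(L)\sim n$. I would try the following, in order.
\begin{enumerate}
\item Replace $R(L)$ by $\max_j h_{K_j}$ evaluated only in directions near $\theta_0$, using that isotropic bodies have $\sigma\{h_K>s\sqrt n\}$ small for $s\gr C$. Then work on the intersection of the bad event with the high-probability event that no $U_jK_j$ has large support function on a cap of size $1/\sqrt n$. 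The latter would be controlled by a separate net bound.
\item Alternatively, argue on $L^\circ$ instead. The condition $h_L(\theta_0)<r$ means $\theta_0/r\in L^\circ$. Since $L^\circ\subseteq K_1^\circ$ and $K_1\supseteq cB_2^n$, the convex hull of $\theta_0/r$ with the part of $L^\circ$ that lies in a cap of radial size $\gtrsim 1/\sqrt n$ gives a cone of directions of angle $\gtrsim 1/\tau$ rather than $1/(\tau\sqrt n)$.
\end{enumerate}
Either route should reduce the cap loss to $(c/\tau)^{n-1}$. Combined with Step 1, this yields $\tau^{-q_m}$ after fixing $c$ small and $C=1/c$.
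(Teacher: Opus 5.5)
There is a genuine gap, and it sits exactly where you flag it. Your Step 1 is correct: it is the paper's averaging step in dual form, $\{h_L\ls 2r\}=\{\rho_{L^\circ}\gr 1/(2r)\}$. Your exponent count $mn-(n-1)=q_m$ is also the right one. But neither proposed repair closes Step 2, because both work with $L$ itself. For general isotropic bodies the support function of $L$ is too large, and not only in a few directions but in most of them.

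Take $K_1=a_nB_1^n$ from the paper's Example. Then $h_L\gr h_{K_1}=a_n\|\cdot\|_\infty\simeq\sqrt{n\ln n}$ on most of the sphere, with $a_n\simeq n$.
\begin{itemize}
\item Your first route rests on $\sigma\{h_K>s\sqrt n\}$ being small for $s\gr C$, and this premise is false. Besides, the increment $h_L(\theta-\theta_0)$ involves directions nearly orthogonal to $\theta_0$, not near it.
\item Your second route needs a ball $(c/\sqrt n)B_2^n\subseteq L^\circ$, i.e.\ $R(L)\ls C\sqrt n$. Here $R(L)\gr a_n\simeq n$. The inclusion $K_1\supseteq cB_2^n$ you quote only gives the upper bound $L^\circ\subseteq c^{-1}B_2^n$, which plays no role in the cap step.
\end{itemize}
Even restricted to ``most directions'', the Lipschitz cap step applied to $L$ would only give caps of radius $\simeq 1/(\tau\sqrt{\ln n})$. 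That loses a factor $(\ln n)^{(n-1)/2}$, which no absolute constant absorbs.

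The missing idea is to replace each $K_j$ by an inner approximation before running your argument; this is Lemma~\ref{lem:recentered-core}.
\begin{itemize}
\item Cut $K_j$ at radius $A\sqrt n\,L_{K_j}$. By Paouris' inequality \eqref{eq:paouris-background}, this removes volume at most $e^{-c\sqrt n}$ and moves the barycenter by at most $CL_{K_j}e^{-c\sqrt n}$.
\item Recenter and rescale to volume one, using $r(K_j)\gr cL_{K_j}$ to absorb the translation. You get a centered volume-one body $\widehat K_j$ with $R(\widehat K_j)\ls C\sqrt n$ and $\widehat K_j\subseteq\lambda_nK_j$, where $1\ls\lambda_n\ls 1+Ce^{-c\sqrt n}$.
\item Run your Steps 1--2 on $\widehat L=\conv\{U_j\widehat K_j\}$. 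Step 1 only uses that the bodies are centered of volume one, so it is unchanged.
\item In Step 2, $R(\widehat L)\ls C\sqrt n$ gives a cap of radius at least $c/(C\tau)$. Hence $\Pp\{\widehat L\not\supseteq rB_2^n\}\ls 2(C'\tau/c)^{n-1}(C_1c/\tau)^{mn}\ls\tau^{-q_m}$ once $c$ is small; use $n-1\ls q_m$ and $mn\ls 2q_m$ for $m\gr2$.
\item Finally, $L\supseteq\lambda_n^{-1}\widehat L$ transfers the inclusion to $L$.
\end{itemize}

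This is the paper's proof, written there in polar form. It uses $\widehat D_m=\bigcap_jU_j\widehat K_j^\circ\supseteq(c/\sqrt n)B_2^n$, the bound $\bigcap_jU_jK_j^\circ\subseteq\lambda_n\widehat D_m$, and Corollary~\ref{cor:master-polynomial}. Once the truncation is in place, your single-threshold Markov step suffices: it is essentially the $\ell=0$ term of \eqref{eq:master-amplification}.
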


For every fixed $\tau_0>1$ the exceptional probability is exponentially small in $(m-1)n+1$.
The factor $\sqrt n$ in this inclusion is optimal: for the volume-one cube $Q_n=[-1/2,1/2]^n$,
$$ \conv\{U_1Q_n,\ldots,U_mQ_n\}\subseteq\frac{\sqrt n}{2}B_2^n $$
for every choice of the rotations.
More generally, if an independent $F\in G_{n,k}$ is introduced, the tail exponent becomes $mn-k+1$.

The next result is obtained as the equal body consequence of a weighted estimate for heterogeneous bodies.

\begin{theorem}\label{th:minkowski-average-negative-moments}
Let $K\subseteq\R^n$ be a symmetric convex body and let $U_1,\ldots,U_m$ be independent Haar-distributed orthogonal transformations.
Put
$$ A_m=\frac1m\sum_{j=1}^mU_j^*(K^\circ),\qquad M=M(K),\qquad b=b(K). $$
Assume that, for some $p_0>0$,
$$ M_{-p}(K)\gr c_0M\qquad(0<p\ls p_0). $$
Let $2n\ls q\ls mp_0$ and put $\Delta=q-n+1$.
Then, for every $u\gr0$, with probability at least $1-e^{-u}$,
\begin{equation}\label{eq:minkowski-average-lower-inclusion}
 A_m\supseteq cM\left(\frac{cM}{b}\right)^{\frac{n-1}{\Delta}}e^{-u/\Delta}B_2^n.
\end{equation}
Moreover, with probability at least $1-e^{-cn}$,
\begin{equation}\label{eq:minkowski-average-upper-inclusion}
 A_m\subseteq C\left(M+\frac b{\sqrt m}\right)B_2^n.
\end{equation}
Consequently, with probability at least $1-e^{-u}-e^{-cn}$,
\begin{equation}\label{eq:minkowski-average-geometric-distance}
 d_{\rm G}(A_m,B_2^n) \ls C\left(1+\frac b{M\sqrt m}\right) \left(\frac{Cb}{M}\right)^{\frac{n-1}{\Delta}}e^{u/\Delta}.
\end{equation}
\end{theorem}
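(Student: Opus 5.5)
The plan is to work with support functions. Since $K$ is symmetric, $h_{K^\circ}=p_K$, and therefore
$$ h_{A_m}(\theta)=\frac1m\sum_{j=1}^m p_K(U_j\theta),\qquad \theta\in S^{n-1}. $$
For fixed $\theta$ the points $U_j\theta$ are independent and uniformly distributed on $S^{n-1}$. The inclusions \eqref{eq:minkowski-average-lower-inclusion} and \eqref{eq:minkowski-average-upper-inclusion} amount to a lower bound for $\min_\theta h_{A_m}$ and an upper bound for $\max_\theta h_{A_m}$. One fact is used throughout: $p_K\ls b|\cdot|$, so each $p_K$ is $b$-Lipschitz, and hence $h_{A_m}$ is $b$-Lipschitz on the sphere.

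\emph{Lower inclusion.} First fix $\theta$. The arithmetic--geometric mean inequality, Markov's inequality, and independence give
$$\Pp\{h_{A_m}(\theta)\ls t\}\ls\Pp\Big\{\prod_j p_K(U_j\theta)^{1/m}\ls t\Big\}\ls t^{q}\prod_{j=1}^m\E\, p_K(U_j\theta)^{-q/m}=t^qM_{-q/m}(K)^{-q}\ls\Big(\frac t{c_0M}\Big)^q .$$
The last step uses $q/m\ls p_0$. To pass to the minimum over $\theta$ I will use a spherical cap argument instead of a net, as in the earlier projection results. Suppose $h_{A_m}(\theta_0)\ls t$ for some $\theta_0$. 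By the Lipschitz bound, $h_{A_m}\ls 2t$ on the cap of geodesic radius $\varepsilon=t/b$ around $\theta_0$. This cap has measure at least $(c\varepsilon)^{n-1}$; note $\varepsilon\ls1$, since we will take $t\ls cM\ls b$. Hence, by Fubini,
$$\Pp\{\min h_{A_m}\ls t\}\ls (Cb/t)^{n-1}\,\E\,\sigma\{h_{A_m}\ls 2t\}\ls (Cb/t)^{n-1}\Big(\frac{2t}{c_0M}\Big)^q=\Big(\frac{Ct}{M}\Big)^{\Delta}\Big(\frac{Cb}{M}\Big)^{n-1}.$$
Setting the right-hand side equal to $e^{-u}$ and solving for $t$ gives exactly
$$t=cM\,(cM/b)^{(n-1)/\Delta}e^{-u/\Delta}.$$
The assumption $q\gr 2n$ ensures $\Delta\gr n+1$, so this choice keeps $t\ls cM$ and the constants stay absolute.

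\emph{Upper inclusion.} Here I will use concentration on the product group $O(n)^m$ with the Hilbert--Schmidt product metric, which has log-Sobolev constant of order $1/n$. Fix $\theta$ and consider $g_\theta(U)=h_{A_m}(\theta)$. By Cauchy--Schwarz,
$$|g_\theta(U)-g_\theta(V)|\ls \frac bm\sum_j\|U_j-V_j\|_{HS}\ls\frac b{\sqrt m}\Big(\sum_j\|U_j-V_j\|_{HS}^2\Big)^{1/2},$$
so $g_\theta$ has Lipschitz constant $b/\sqrt m$. Since $\E g_\theta=M$, concentration gives
$$\Pp\{g_\theta\gr M+s\}\ls\exp(-cnms^2/b^2).$$
Taking $s=C_1b/\sqrt m$ makes this at most $e^{-c C_1^2 n}$, which beats the cardinality $5^n$ of a $1/2$-net $\mathcal N$ of $S^{n-1}$. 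Writing $\theta=x+y$ with $x\in\mathcal N$ and $|y|\ls1/2$, the standard bound $\max h\ls \max_{\mathcal N}h+\tfrac12\max h$ gives $\max h_{A_m}\ls 2\max_{\mathcal N}h_{A_m}\ls 2(M+C_1b/\sqrt m)$ with probability at least $1-e^{-cn}$.

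\emph{Distance.} On the intersection of the two events, which has probability at least $1-e^{-u}-e^{-cn}$, we have $rB_2^n\subseteq A_m\subseteq RB_2^n$. Then $d_{\rm G}(A_m,B_2^n)\ls R/r$, and absorbing $c^{-1}$ into $C$ gives \eqref{eq:minkowski-average-geometric-distance}.

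I expect the main obstacle to be the upper inclusion. A naive Hoeffding bound uses only the boundedness $0\ls p_K\ls b$ and loses a factor $\sqrt n$, giving $M+b\sqrt{n/m}$. The factor $n$ in the exponent must come from the curvature of $O(n)$, and the real issue is whether a precise form of Gromov--Milman concentration on $O(n)^m$ for the product Hilbert--Schmidt metric is available. If it is not, I would work on $SO(n)^m$, where it holds, and handle the two components of $O(n)$ by a reflection.
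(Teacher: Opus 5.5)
Your proposal is correct and is essentially the paper's proof, which obtains the theorem as the equal-weight case of Theorem~\ref{th:weighted-negative-profile}. The paper bounds $\E h_{A_m}(\theta)^{-q}$ by $M_{-q/m}(K)^{-q}$ via AM--GM and independence, then applies the spherical-cap amplification of Lemma~\ref{lem:cap-circumradius} and Corollary~\ref{cor:master-polynomial} to the polar $A_m^\circ$. Your Lipschitz-cap step for $h_{A_m}$ is the dual form of that amplification, done at a single threshold instead of a dyadic sum. The upper inclusion is, in both, concentration plus a $1/2$-net. The only difference is that the paper runs the concentration on the product of spheres: for fixed $\theta$, $(U_1\theta,\ldots,U_m\theta)$ is uniform on $(S^{n-1})^m$, and $(x_j)\mapsto\frac1m\sum_jp_K(x_j)$ is $b/\sqrt m$-Lipschitz there, so the $O(n)$ versus $SO(n)$ issue you anticipate never arises.
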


The same method also applies after an independent random projection.
In Section~\ref{section:5} we prove a projected weighted estimate in which the residual exponent is $q-k+1$ and the upper deviation term is reduced by the factor $\sqrt{k/n}$.
In particular, if $F$ is an independent Haar-distributed element of $G_{n,k}$ and
$$ A_m=\frac1m\sum_{j=1}^mU_j^*(Q_n^\circ), \qquad L_{m,k}=\ln\left(e+\min\left\{n,\frac{mn}{k\ln(en)}\right\}\right), $$
then, with probability at least $1-Ce^{-ck}$,
$$ d_{\rm G}(P_FA_m,B_F) \ls C\left(\sqrt{\frac{\ln(en)}{L_{m,k}}}+\sqrt{\frac{k}{mL_{m,k}}}\right). $$

Next, we consider the global averaging problem. 
Bourgain, Lindenstrauss and V.~Milman~\cite{BLM} proved that sufficiently many random rotations of a symmetric convex body give an almost Euclidean average, while Chasapis and Giannopoulos~\cite{Chasapis-Giannopoulos-2016} obtained an isomorphic version for fixed number of rotations of bodies in John's position. 
Litvak, V.~D.~Milman and Schechtman~\cite{Litvak-Milman-Schechtman-1997,Litvak-Milman-Schechtman-1998} determined, up to absolute constants, the number of rotations needed for a uniformly Euclidean average in terms of $(b(K)/M(K))^2$.
Combining the Bourgain--Lindenstrauss--Milman theorem with the Euclidean regularization scheme of Fresen~\cite{Fresen-2015} and Chasapis--Giannopoulos~\cite{Chasapis-Giannopoulos-2016}, adapted to the isotropic position, gives the following inradius dependent form.

\begin{theorem}\label{th:global-dvoretzky}
There exist an absolute integer $k_0\gr2$ and absolute constants $c,C>0$ such that the following holds.
Let $K\subseteq\R^n$ be a symmetric isotropic convex body.
For every integer $k\gr k_0$, independent Haar-distributed orthogonal transformations $U_1,\ldots,U_k\in O(n)$ satisfy
\begin{equation}\label{eq:isotropic-global}
 d_{\rm G}\left(\frac1k\sum_{i=1}^kU_i^*(K^\circ),B_2^n\right) \ls C\max\left\{1,\frac1{r(K)}\sqrt{\frac nk}\right\}
\end{equation}
with probability at least $1-e^{-cn}$.
\end{theorem}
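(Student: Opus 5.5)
We must prove Theorem \ref{th:global-dvoretzky}. Write $T=K^\circ$, so $h_T=p_K$, $R(T)=1/r(K)$, $M^\ast(T)=w(T)=M(K)$ and $b(K)=\max p_K=1/r(K)$. Put $A=\frac1k\sum_{i\ls k}U_i^*T$, so that $h_A=\frac1k\sum_i h_T\circ U_i$. We need a lower inclusion $A\supseteq c_1 M(K)B_2^n$ and an upper inclusion $A\subseteq C\bigl(M(K)+\frac{b(K)}{\sqrt k}\bigr)B_2^n$, both with probability $1-e^{-cn}$. Their ratio is
$$C\Bigl(1+\frac{1}{r(K)M(K)\sqrt k}\Bigr).$$
Two isotropic facts then finish the argument. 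First, $K\subseteq C nB_2^n$ for isotropic $K$ (using $L_K\simeq1$ by Klartag--Lehec), so $p_K(\theta)\gr c/n$. Second, $M(K)\gr1/\vrad(K)\cdot c\gr c/\sqrt n$ by Jensen, $\int p_K^{-n}d\sigma=|K|/\omega_n$ and $|K|=1$. With $M(K)\gr c/\sqrt n$ the ratio becomes $\ls C\max\{1,\frac1{r(K)}\sqrt{n/k}\}$, which is \eqref{eq:isotropic-global}.

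\textbf{Upper inclusion.} This is the Bourgain--Lindenstrauss--Milman / Litvak--Milman--Schechtman estimate. For fixed $\theta$, $h_A(\theta)$ is an average of $k$ i.i.d. variables $h_T(U_i\theta)$, each Lipschitz in $U_i$ with constant $b=R(T)$ and with mean $M$. Spherical concentration gives sub-Gaussian tails with parameter $b/\sqrt n$ for each term. Averaging, $h_A(\theta)-M$ has tails $\exp(-ckn t^2/b^2)$. We take a $1/2$-net $\mathcal N$ of $S^{n-1}$ with $|\mathcal N|\ls5^n$ and choose $t=C b/\sqrt k$ to beat $5^n$, getting probability $1-e^{-cn}$. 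Then $\max_{S^{n-1}}h_A\ls2\max_{\mathcal N}h_A$ because $h_A$ is a norm. The same bound is stated as \eqref{eq:minkowski-average-upper-inclusion}, which I would simply invoke.

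\textbf{Lower inclusion.} This is the main obstacle, since concentration from below for $h_A$ can be wasteful when $b/M$ is huge. Following Fresen and Chasapis--Giannopoulos, I would regularize. Set $T'=\conv(T\cup\rho B_2^n)$ with $\rho\simeq M(K)$. The support function can only increase, and $M^\ast(T')\ls M(K)+\rho\simeq M(K)$. Alternatively, I would first split into $k_0$ groups. By Theorem~\ref{th:m-rotation-main} applied to $m=k_0$ bodies, a convex hull of $k_0$ random rotations of $K$ contains $c\sqrt n B_2^n$ with probability $1-2^{-(k_0-1)n}$. The intersection $\bigcap U_jK^\circ$ then has small circumradius $C/\sqrt n$. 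Dually, the polar of the Minkowski average is dominated by the intersection, since $h_{\sum U_j^*T}\gr\max_j h_{U_j^*T}$. Hence
$$\frac1{k_0}\sum_{j\ls k_0}U_j^*T\supseteq\frac1{k_0}\conv\Bigl(\bigcup_j U_j^*T\Bigr)\supseteq \frac{c}{k_0\sqrt n}B_2^n\cdot\sqrt n\cdot\frac1{\sqrt n}.$$
Rather, $\conv\bigcup U_j^*K^\circ=(\bigcap U_j^*K)^\circ\supseteq \frac{1}{R(\bigcap U_j^*K)}B_2^n$. Applying Theorem~\ref{th:m-rotation-main} to the bodies $K$ (whose intersection of rotations has radius $\ls C\sqrt n$ after dualizing) gives inradius $\gr c/\sqrt n$.

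Splitting the $k$ rotations into $\lfloor k/k_0\rfloor$ blocks, each block average contains $\frac{c}{k_0\sqrt n}B_2^n$. A union bound over blocks, with failure probability $2^{-(k_0-1)n}$ each, succeeds since $k\ls e^{n}$ is unnecessary: we only need one good block? No — Minkowski averaging requires every block. The number of blocks can exceed $e^{cn}$, so instead I would use a Chernoff bound. A positive fraction of the blocks are good, and $h_A\gr\frac1k\sum_{\text{good}}h$, which keeps the constant. This yields $A\supseteq\frac{c}{\sqrt n}B_2^n$ with probability $1-e^{-cn}$. That suffices, since the upper bound ratio used $M\gr c/\sqrt n$: the distance is at most $C\sqrt n(M+b/\sqrt k)$. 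However, $\sqrt nM$ could be large (up to $\sqrt{\ln n}$), so the bound $C\max\{1,\ldots\}$ needs $M\ls C/\sqrt n$ as well.

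For that I would combine the two lower routes. Concentration gives $A\supseteq cM B_2^n$ whenever $k\gr C(b/M)^2$. In that regime the distance is at most $C$, which covers the regime where $\frac1{r(K)}\sqrt{n/k}\ls1$ up to the factor $\sqrt nM$. The remaining delicate point is reconciling $\sqrt n M(K)$ with the bound. I would handle it via the regularized body $T'$ and Litvak--Milman--Schechtman, using $\rho=c/\sqrt n$, and I expect this matching of constants to be the hardest step.
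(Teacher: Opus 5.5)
There is a genuine gap: you never establish a valid lower inclusion for $A=\frac1k\sum_iU_i^*(K^\circ)$, and each route you sketch fails.

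Your first plan, $A\supseteq c_1M(K)B_2^n$ for every $k\gr k_0$, is false. For $K=Q_n$, the deterministic bound of Theorem~\ref{th:unconditional-random-intersection-volume} gives $R(\bigcap_{i\ls k}U_i^*Q_n)\gr c\sqrt{n/\ln(e+k)}$. Hence there is a direction $\theta$ with $h_A(\theta)=\frac2k\sum_i\|U_i\theta\|_\infty\ls C\sqrt{\ln(e+k)/n}$. So $r(A)\ls C\sqrt{\ln(e+k)/n}$, far below $M(Q_n)\simeq\sqrt{\ln(en)/n}$ for bounded $k$.

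The regularization $T'=\conv(T\cup\rho B_2^n)$ goes the wrong way. It enlarges $T$, so the corresponding average contains $A$ and says nothing about $r(A)$.

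The block argument uses Theorem~\ref{th:m-rotation-main} in the wrong polarity. That theorem bounds $R(\bigcap_jU_jK^\circ)$, i.e.\ it gives $\conv\bigcup_jU_jK\supseteq c\sqrt nB_2^n$. Your bound on a block $B$, $\sum_{j\in B}h_{U_j^*K^\circ}\gr\max_{j\in B}p_K(U_j\,\cdot)$, instead needs $R(\bigcap_{j\in B}U_j^*K)\ls C\sqrt n$. The theorem cannot be applied to $K^\circ$, which is not isotropic. Running Corollary~\ref{cor:master-polynomial} directly on $\bigcap_jU_jK$ gives only tail scale $a\simeq\sqrt n$ against inradius $r(K)\gr c$. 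This costs a factor $(a/r)^{(n-1)/((k_0-1)n+1)}\approx n^{1/(2(k_0-1))}$, which is bounded only with about $\ln n$ rotations per block.

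The step you single out as hardest is actually harmless. Suppose $r(A)\gr c/\sqrt n$ holds with probability $1-e^{-cn}$, and combine it with your (correct) upper inclusion. If $k\ls C_0(b/M)^2$, then $\sqrt nM\ls\sqrt{C_0}\,r(K)^{-1}\sqrt{n/k}$. If $k\gr C_0(b/M)^2$, the Bourgain--Lindenstrauss--Milman theorem already gives bounded distance.

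So the missing piece is exactly the lower bound at scale $1/\sqrt n$ for a bounded number of rotations. It needs an isotropic input your proposal never uses: the reverse Brunn--Minkowski estimate \eqref{eq:reverse-BM-isotropic}. The paper regularizes $K$, not $K^\circ$.

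With $r=r(K)$ and $K_t=\conv(K\cup trB_2^n)$, one has $K\subseteq K_t\subseteq tK$, equivalently $t^{-1}K^\circ\subseteq K^\circ\cap(tr)^{-1}B_2^n=K_t^\circ\subseteq K^\circ$. Also $b(K_t)=1/(tr)$. Since $K_t\subseteq K+trB_2^n$, polar integration and \eqref{eq:reverse-BM-isotropic} give $M(K_t)\gr\omega_n^{1/n}|K_t|^{-1/n}\gr c/(\sqrt n\max\{1,tr/\sqrt n\})$. Thus $(b(K_t)/M(K_t))^2\ls C\max\{1,n/(tr)^2\}$.

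Choosing $t=\max\{1,\frac Ar\sqrt{n/k}\}$ makes the Bourgain--Lindenstrauss--Milman theorem applicable to $K_t$ for every $k\gr k_0$, with probability $1-e^{-cn}$. The sandwich $K\subseteq K_t\subseteq tK$ then transfers the two-sided norm estimate to $K$ at the cost of the factor $t$, which is exactly \eqref{eq:isotropic-global}.
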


Since $K$ has volume one, $r(K)\ls\omega_n^{-1/n}\simeq\sqrt n$, while every isotropic body satisfies $r(K)\gr c$.
Thus \eqref{eq:isotropic-global} gives bounded geometric distance as soon as $k\gr Cn/r(K)^2$.
In particular, the number of rotations ranges from order $n$ when $r(K)\simeq1$ to an absolute number when $r(K)\simeq\sqrt n$.
Proposition~\ref{prop:product-cylinder-global-sharpness} shows that this dependence on the inradius is optimal, up to absolute constants, throughout this range.

Finally, we consider the volume of intersections of independent rotations.
General estimates for the volume and radius of $C\cap U(C)$ were obtained by Brazitikos and Stavrakakis~\cite{Brazitikos-Stavrakakis-2014}.
In the unconditional isotropic class we determine the correct order for every number of rotations and show that the cube is extremal.
Put
$$ Q_n=[-1/2,1/2]^n,\qquad \Lambda_{m,n}=\min\{n,\ln(e+m)\}. $$
The lower estimate below is deterministic, while the matching upper estimate for the cube holds with exponentially high probability.

\begin{theorem}\label{th:unconditional-random-intersection-volume}
There exist absolute constants $c,C,c'>0$ with the following property.
Let $n,m\gr2$, let $K_1,\ldots,K_m\subseteq\R^n$ be unconditional isotropic convex bodies, and let $U_1,\ldots,U_m\in O(n)$.
Then
\begin{equation}\label{eq:unconditional-random-intersection-lower}
 \left|\bigcap_{i=1}^mU_iK_i\right|^{1/n}\gr\frac{c}{\sqrt{\Lambda_{m,n}}}.
\end{equation}
On the other hand, if $U_1=I_n$ and $U_2,\ldots,U_m$ are independent Haar-distributed orthogonal transformations, then
\begin{equation}\label{eq:cube-random-intersection-mean-volume}
 \E\left|\bigcap_{i=1}^mU_iQ_n\right|\ls\left(\frac{C}{\sqrt{\Lambda_{m,n}}}\right)^n.
\end{equation}
Consequently,
\begin{equation}\label{eq:cube-random-intersection-probability}
 \Pp\left\{\frac{c}{\sqrt{\Lambda_{m,n}}}\ls\left|\bigcap_{i=1}^mU_iQ_n\right|^{1/n}\ls\frac{C}{\sqrt{\Lambda_{m,n}}}\right\}\gr1-e^{-c'n}.
\end{equation}
In particular, the volume-one cube is extremal, up to absolute constants, for the median volume radius of intersections of $m$ independent rotations in the unconditional isotropic class.
\end{theorem}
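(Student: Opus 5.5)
Three parts need proof: the deterministic lower bound \eqref{eq:unconditional-random-intersection-lower}, the expectation bound \eqref{eq:cube-random-intersection-mean-volume} for the cube, and their combination \eqref{eq:cube-random-intersection-probability}.

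\emph{Lower bound.} I would use two classical facts about an unconditional isotropic body $K$ of volume one. First, $K\supseteq c\,B_2^n$; the standard argument via the Bobkov--Nazarov comparison gives $c\sqrt n\,B_1^n\subseteq K$, hence $cB_2^n\subseteq K$. Second, by Bobkov--Nazarov the normalized body $K/\sqrt n$ dominates a constant multiple of the body of the product exponential measure. Concretely, I would aim to show
\[
K\supseteq c\,\{x:\ \|x\|_{\psi_1\text{-type}}\ls 1\}\supseteq c\bigl(B_2^n\cap t^{-1}\sqrt n\,B_\infty^n\bigr)
\]
for $t$ to be chosen, so that each $U_iK_i$ contains a rotated copy of a fixed body. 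The cleaner route is Gaussian. Let $\gamma$ be the standard Gaussian measure scaled by $s$. Unconditionality together with $K\supseteq c\sqrt n B_1^n$ and $K\supseteq c B_2^n$ should give $\gamma_s(\R^n\setminus K)\ls e^{-c n}$ for $s\simeq 1/\sqrt{\ln(e+m)}$; the reason is that a Gaussian vector of variance $s^2$ has norm at most $\sqrt{\ln m}$-small relative to the relevant one-dimensional tails. This is a Gaussian tail estimate for the norm $p_K$ and is where unconditionality enters: a Lipschitz-type bound plus median estimates for unconditional bodies yields $\gamma_s(p_{K}>1)\ls e^{-\ln(e+m)\cdot c' n}$-type bounds. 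Since $\gamma_s$ is rotation invariant, a union bound gives
\[
\gamma_s\Bigl(\bigcap U_iK_i\Bigr)\gr 1-m\,e^{-c n\ln(e+m)}\gr \tfrac12 .
\]
Hence $\bigl|\bigcap U_iK_i\bigr|\gr \tfrac12(2\pi s^2)^{n/2}$, giving the bound $c/\sqrt{\ln(e+m)}$. When $\ln(e+m)>n$, the trivial inclusion $\bigcap U_iK_i\supseteq cB_2^n$ already gives order $n^{-1/2}$, which matches $\Lambda_{m,n}=n$. I expect the Gaussian tail estimate for unconditional isotropic bodies to be the main obstacle; I would try first to reduce it to $\sqrt n B_1^n$ and a coordinatewise comparison with the cross-polytope/cube.

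\emph{Upper bound for the cube.} Write $\E\bigl|\bigcap U_iQ_n\bigr|=\int_{Q_n}\Pp\{x\in U_iQ_n\}^{m-1}\,dx$. For fixed $x$, the vector $U_i^{-1}x$ is uniform on the sphere of radius $|x|$, and $\Pp\{\|\theta\|_\infty\ls 1/(2|x|)\}\ls (C/(|x|\sqrt{\ln n}))^{\,\cdot}$-type estimates hold. More precisely, $\Pp\{|x|\,\|\theta\|_\infty\ls 1/2\}\ls \exp(-c n e^{-c'n/|x|^2})$-type small-ball bounds for the maximum of spherical coordinates. I would split the integral at $|x|\simeq\sqrt{n/\Lambda_{m,n}}$. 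The inner part contributes at most $\bigl|\sqrt{n/\Lambda}\,B_2^n\bigr|\ls (C/\sqrt\Lambda)^n$. On the outer part the probability is at most $e^{-n}$ raised to the power $m-1$, or rather chosen so that the $(m-1)$-th power is $\ls (C/\sqrt\Lambda)^n$. This requires $\Pp\ls \exp(-cn e^{-c'\Lambda})$, and $(m-1)\,n\,e^{-c'\Lambda}\gr n\ln\Lambda$ holds for a suitable $c'$.

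\emph{Combination.} For \eqref{eq:cube-random-intersection-probability}, the lower bound is deterministic, and Markov's inequality applied to the expectation bound with the constant $C$ enlarged to $eC$ gives failure probability at most $e^{-n}$.
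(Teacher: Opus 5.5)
Your upper bound for the cube and the Markov step follow the paper's argument. You write $\E|\bigcap_iU_iQ_n|=\int_{Q_n}\Pp\{x\in UQ_n\}^{m-1}\,dx$, split at $|x|=A\sqrt{n/\Lambda_{m,n}}$, bound the outer part by $\alpha_n(a)^{m-1}|Q_n|$, and control $\alpha_n(a)$ by a small-ball estimate for $\|\theta\|_\infty$. That estimate still has to be proved, for instance via $G=|G|\Theta$ as in Lemma~\ref{lem:cube-random-intersection-tail}.

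The genuine gap is the deterministic lower bound. First, $K\supseteq c\sqrt n\,B_1^n$ is false: $c\sqrt n\,e_1\notin Q_n$ once $n>1/(4c^2)$. What Bobkov--Nazarov gives is $K\supseteq cB_\infty^n$, and that is the reduction you need.

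More seriously, the Gaussian tail bound you aim for cannot hold. For the cube, $\gamma_s(Q_n)=(1-q)^n$ exactly, with $q=\Pp\{|g_1|>1/(2s)\}$. For $s\simeq1/\sqrt{\ln(e+m)}$, the complement of $Q_n$ contains a half-space at distance $r(Q_n)=1/2$, so its measure is at least of order $q$. This is only polynomially small in $m$, never $e^{-cn}$ unless $\ln(e+m)\gtrsim n$. For bounded $m$ and $s\simeq1$, the cube itself has Gaussian measure $e^{-cn}$. So demanding $\gamma_s(\bigcap U_iK_i)\gr1/2$ through a union bound forces $mnq\lesssim1$, i.e.\ $s\lesssim1/\sqrt{\ln(e+mn)}$. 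This yields only $|\bigcap U_iK_i|^{1/n}\gr c/\sqrt{\ln(e+mn)}$, the weaker bound recorded at the end of Section~\ref{section:7}. It misses \eqref{eq:unconditional-random-intersection-lower} by a factor $\sqrt{\ln n}$ whenever $\ln m\ll\ln n$; for fixed $m$ the correct order is $1$.

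The missing idea is that you only need $\gamma_s(\bigcap U_iK_i)\gr e^{-Cn}$, not $\gr1/2$, because the density of $\gamma_s$ is at most $(2\pi s^2)^{-n/2}$. A product-type lower bound of this kind for an intersection calls for a correlation inequality, not a union bound. After reducing to $U_iK_i\supseteq U_i(cB_\infty^n)$, the set $\bigcap_iU_i(cB_\infty^n)$ is the intersection of the $mn$ symmetric slabs $\{|\langle x,U_ie_j\rangle|\ls c\}$. \v{S}id\'ak's lemma then gives $\gamma_s\gr(1-q)^{mn}$ with $q=\Pp\{|g_1|>c/s\}$. Choosing $s=c/\sqrt{2\ln(4m)}$ gives $q\ls1/(2m)$, hence $\gamma_s\gr e^{-n}$ and $|\bigcap U_iK_i|^{1/n}\gr\sqrt{2\pi}\,s/e\gr c'/\sqrt{\ln(e+m)}$. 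Your remark $\bigcap U_iK_i\supseteq cB_2^n$ covers the range $\ln(e+m)>n$.

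The paper reaches the same bound by polarity instead. After reducing to $Q_n$, $(\bigcap U_iQ_n)^\circ=2\conv\{\pm U_ie_j\}$ is a polytope in $B_2^n$ with $2mn$ vertices. The Carl--Pajor/B\'ar\'any--F\"uredi estimate gives $\vrad\ls C\sqrt{\Lambda_{m,n}/n}$, and the reverse Santal\'o inequality transfers this back to the intersection. The corrected \v{S}id\'ak route is more direct, since it bounds the intersection itself and avoids both the few-vertex estimate and reverse Santal\'o.
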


The paper is organized as follows.
Section~\ref{section:2} recalls notation and preliminary facts.
Section~\ref{section:3} contains the endpoint estimates and their applications to projections.
Section~\ref{section:4} treats independent rotations and the mixed rotation--projection theorem.
Section~\ref{section:5} studies small deviations, weighted Minkowski averages and their random projections.
Section~\ref{section:6} proves Theorem~\ref{th:global-dvoretzky}, its application to the cube and the sharpness of its dependence on the inradius.
Section~\ref{section:7} determines the sharp volume profile of random intersections in the unconditional isotropic class.
The calculations for the cube and the estimates for the weak projection profile introduced in Section~\ref{section:3} are collected in Appendix~\ref{appendix:special-calculations}.

\section{Notation and preliminary facts}\label{section:2}

We work in $\R^n$ with its standard Euclidean structure.
The Euclidean norm, unit ball and unit sphere are denoted by $|\cdot|$, $B_2^n$ and $S^{n-1}$, respectively.
Lebesgue measure in $\R^n$ is also denoted by $|\cdot|$, and $\omega_n=|B_2^n|$.
For $1\ls p\ls\infty$, $B_p^n$ denotes the unit ball of $\ell_p^n$; in particular, $B_\infty^n=[-1,1]^n$.
We write $\sigma$ for the rotationally invariant probability measure on $S^{n-1}$ and $\nu$ for Haar probability measure on the orthogonal group $O(n)$.
For $1\ls k\ls n$, $G_{n,k}$ denotes the set of $k$-dimensional linear subspaces of $\R^n$, and $\nu_{n,k}$ denotes its rotationally invariant probability measure.
If $E\subseteq\R^n$ is a nonzero linear subspace, we write $B_E=B_2^n\cap E$, $S_E=S^{n-1}\cap E$ and $\sigma_E$ for the rotationally invariant probability measure on $S_E$.
If $F\in G_{n,k}$, then $P_F$ is the orthogonal projection onto $F$.
For $I\subseteq\{1,\ldots,n\}$, let $E_I=\operatorname{span}\{e_i:i\in I\}$, let $P_I=P_{E_I}$ and write $B_2^I=B_2^n\cap E_I$, where $e_1,\ldots,e_n$ are the standard coordinate vectors.
We write $I_n$ for the identity operator on $\R^n$ and $U^*$ for the adjoint of $U$; in particular, $U^*=U^{-1}$ for $U\in O(n)$.
The symbols $\Pp$ and $\E$ denote probability and expectation with respect to all random objects under consideration.
Throughout, $G$ denotes a standard Gaussian vector in $\R^n$.
The symbols $c,C,c_1,C_1,\ldots$ denote positive absolute constants whose values may change from line to line; $a\simeq b$ means that $c_1a\ls b\ls C_1a$ for absolute constants $c_1,C_1>0$, and a subscript indicates the parameters on which these constants may depend.
For a set $A$, $\mathds{1}_A$ denotes its indicator function.
For $x\in\R^n$ and a nonempty set $A\subseteq\R^n$ put $\operatorname{dist}(x,A)=\inf\{|x-y|:y\in A\}$.

A convex body is a compact convex set with nonempty interior.
It is symmetric if $K=-K$ and centered if its barycenter is the origin.
It is unconditional if $(x_1,\ldots,x_n)\in K$ implies $(\varepsilon_1x_1,\ldots,\varepsilon_nx_n)\in K$ for every choice of $\varepsilon_i\in\{-1,1\}$.
For a body $K$ containing the origin in its interior, its radial function, support function and Minkowski functional are defined by
$$ \rho_K(\theta)=\sup\{t\gr0:t\theta\in K\},\qquad h_K(x)=\sup_{y\in K}\langle x,y\rangle,\qquad p_K(x)=\inf\{t>0:x\in tK\}. $$
Its polar body is
$$ K^\circ=\{x\in\R^n:\langle x,y\rangle\ls1\ \text{for all }y\in K\}. $$
If $K$ is symmetric, then $p_K=\|\cdot\|_K$.
On the sphere, $p_K=\rho_K^{-1}$, $\rho_{K^\circ}=h_K^{-1}$ and $h_{K^\circ}=p_K$.
We shall also use without further comment that $(UK)^\circ=U(K^\circ)$, that $(P_FK)^\circ_F=K^\circ\cap F$ (the polar being taken in $F$), and that
$$ \left(\bigcap_{j=1}^mK_j\right)^\circ=\conv\left(\bigcup_{j=1}^mK_j^\circ\right). $$
Moreover, $h_{K+L}=h_K+h_L$, $h_{\conv(K\cup L)}=\max\{h_K,h_L\}$ and $h_{U^*(K^\circ)}(x)=p_K(Ux)$ for $U\in O(n)$.

We write
$$ R(K)=\max_{x\in K}|x|,\qquad r(K)=\max\{r>0:rB_2^n\subseteq K\},\qquad \vrad(K)=\left(\frac{|K|}{\omega_n}\right)^{1/n}. $$
Thus $r(K)=1/R(K^\circ)$ and $r(K)=\min_{S^{n-1}}h_K$, while $b(K)=\max_{S^{n-1}}p_K=1/r(K)$.
Polar integration gives
\begin{equation}\label{eq:polar-integration}
 |K|=\omega_n\int_{S^{n-1}}\rho_K^n\,d\sigma=\omega_n\int_{S^{n-1}}p_K^{-n}\,d\sigma, \qquad |K^\circ|=\omega_n\int_{S^{n-1}}h_K^{-n}\,d\sigma,
\end{equation}
and $\omega_n^{1/n}\simeq n^{-1/2}$.
We use the notation $M_q(K)$, $w_q(K)$, $M(K)$, $w(K)$, $b(K)$ and $d_\ast(K,\vartheta)$ introduced in the Introduction.
Polarity gives $w_q(K)=M_q(K^\circ)$ and both $M_q(K)$ and $w_q(K)$ are nondecreasing functions of $q$.
For a symmetric body $K$, the function $p_K$ is $b(K)$-Lipschitz on $S^{n-1}$.

For a non-negative random variable $Y$ and $q>0$ we write
$$ \|Y\|_{L_{q,\infty}}=\sup_{s>0}s\,\Pp\{Y>s\}^{1/q}. $$
This is the usual weak $L_q$, or Lorentz $L_{q,\infty}$, quasi-norm. 
For $A>0$, the inequality $\|Y\|_{L_{q,\infty}}\ls A$ is equivalent to
$$ \Pp\{Y>A\tau\}\ls\tau^{-q}\qquad(\tau>0). $$
For a symmetric convex body $C\subseteq\R^n$ and $0\ls d<n$ we use the convention
$$ c_d(C,\ell_2^n)=\inf\{R(C\cap F):F\in G_{n,n-d}\} $$
for its $d$-th Gelfand width in $\ell_2^n$. 
This is equivalent to the convention in~\cite{Foucart-Pajor-Rauhut-Ullrich-2010}, where the infimum is taken over the kernels of $d\times n$ matrices.
For symmetric bodies
$$ d_{\rm G}(K,L)=\inf\{ab:a^{-1}L\subseteq K\subseteq bL,\ a,b>0\} $$
is the geometric distance of $K$ and $L$.

A convex body $K$ is isotropic if $|K|=1$, its barycenter is the origin and
$$ \int_K\langle x,\theta\rangle^2\,dx=L_K^2\qquad(\theta\in S^{n-1}) $$
for some $L_K>0$.
Every convex body has an isotropic affine image, unique up to orthogonal transformations; see \cite{GPT-survey-2025}.
The slicing problem was posed by Bourgain~\cite{Bourgain-1986}.
We shall use its solution by Klartag and Lehec~\cite{KL}, which gives
\begin{equation}\label{eq:universal-slicing}
 c\ls L_K\ls L_0
\end{equation}
for an absolute constant $L_0$.
We also use the optimal mean-width estimate of Bizeul~\cite{Bizeul-mmstar-2026}, which improves the earlier bound of E.~Milman~\cite{EMilman-2014}, and the gauge estimate of Bizeul and Klartag~\cite{Bizeul-Klartag-2025}. After rescaling from covariance-one normalization, and using the dimension-free bound for the third-moment parameter of Letwin~\cite{Letwin-2026} as recorded in~\cite{Bizeul-mmstar-2026}, these give
\begin{equation}\label{eq:isotropic-spherical-means}
 w(K)\ls CL_K\sqrt{n\ln(en)}, \qquad M(K)\ls C\sqrt{\frac{\ln(en)}n}.
\end{equation}
We refer to \cite{BGVV-book} for background on isotropic convex bodies and log-concave measures and to \cite{AGA-book,AGA-book-2} for the local theory of finite-dimensional normed spaces.

\section{Small support values and random projection inradii}\label{section:3}

We first justify the endpoint estimate used in the Introduction.
By \eqref{eq:polar-integration} and the Blaschke--Santal\'o inequality~\cite{BGVV-book}, every centered convex body $C$ satisfies
$$ |C^\circ|=\omega_n\int_{S^{n-1}}h_C(\theta)^{-n}\,d\sigma(\theta)\ls\frac{\omega_n^2}{|C|}. $$
This is equivalent to the first inequality in \eqref{eq:intro-santalo-negative-support}.
Markov's inequality yields the tail estimate
$$ \sigma\{h_C\ls t\,\vrad(C)\} \ls t^n\vrad(C)^n\int_{S^{n-1}}h_C^{-n}\,d\sigma \ls t^n. $$
Finally, if $|C|=1$, then $\vrad(C)=\omega_n^{-1/n}\simeq\sqrt n$, and the volume-one form \eqref{eq:intro-small-support} follows as well.

\begin{proof}[Proof of Theorem~$\ref{th:dstar}$]
Apply \eqref{eq:intro-small-support} with $t=\vartheta w(K)/\sqrt n$ and take minus logarithms.
The last assertion follows by decreasing the absolute constant $c>0$ so that the resulting probability is at most $e^{-n}$.
\end{proof}

For the proof of Theorem~\ref{th:inradius} we recall two standard estimates.
Klartag and Vershynin~\cite{Klartag-Vershynin-2007} proved that, if $K$ is symmetric, $1\ls q<n$ and $1\ls k<q/4$, then
$$ \left(\int_{G_{n,k}}r(P_FK)^{-k}\,d\nu_{n,k}(F)\right)^{1/k}\ls C\frac{w(K)}{w_{-q}(K)^2}. $$
We shall also use the low $M^\ast$ estimate in the following form (see, for example, \cite{AGA-book}): for a symmetric convex body $C$ and $1\ls k<n$, a Haar-distributed $F\in G_{n,k}$ satisfies
$$ R(C\cap F)\ls C\sqrt{\frac n{n-k}}\,w(C) $$
with probability at least $1-\exp(-c(n-k))$.

\begin{proof}[Proof of Theorem~$\ref{th:inradius}$]
Apply the first estimate with $q=n-1$.
By \eqref{eq:intro-santalo-negative-support} and monotonicity of the spherical moments,
$$ w_{-(n-1)}(K)\gr w_{-n}(K)\gr\omega_n^{-1/n}\gr c\sqrt n. $$
Hence
$$ \left(\int_{G_{n,k}}r(P_FK)^{-k}\,d\nu_{n,k}(F)\right)^{-1/k}\gr c\frac n{w(K)}. $$
Markov's inequality gives $r(P_FK)\gr cn/w(K)$ with probability at least $1-e^{-k}$.

Applying the low $M^\ast$ estimate to $C=K^\circ$ and using $w(K^\circ)=M(K)$, we get
$$ R(K^\circ\cap F)\ls C\sqrt{\frac n{n-k}}\,M(K) $$
outside a set of probability at most $e^{-c(n-k)}$.
Since $(P_FK)^\circ_F=K^\circ\cap F$,
$$ P_FK\supseteq c\sqrt{1-\frac kn}\,\frac1{M(K)}B_F. $$
Since $k<(n-1)/4$, the factor $\sqrt{1-k/n}$ is bounded below by an absolute constant.
Intersecting the two good events completes the proof.
\end{proof}

Let $X$ be uniformly distributed on an isotropic convex body $K$.
Paouris' deviation inequality~\cite{Paouris-2006} gives
\begin{equation}\label{eq:paouris-background}
 \Pp\{|X|>Ct\sqrt nL_K\}\ls e^{-ct\sqrt n}, \qquad t\gr1.
\end{equation}

\begin{lemma}\label{lem:recentered-core}
There exist absolute constants $c,C>0$ such that every isotropic convex body $K\subseteq\R^n$, $n\gr2$, admits a centered convex body $\widehat K$ of volume one satisfying
\begin{equation}
 R(\widehat K)\ls C\sqrt n,
\end{equation}
\begin{equation}
 \widehat K\subseteq\lambda_nK, \qquad 1\ls\lambda_n\ls1+Ce^{-c\sqrt n},
\end{equation}
and
\begin{equation}\label{eq:recentered-core-polar-comparison}
 K^\circ\subseteq\lambda_n\widehat K^\circ, \qquad \widehat K^\circ\supseteq\frac c{\sqrt n}B_2^n.
\end{equation}
Moreover, for every $t>0$,
\begin{equation}\label{eq:recentered-core-tail}
 \sigma\{\rho_{\widehat K^\circ}\gr t\} \ls\min\left\{1,\left(\frac C{t\sqrt n}\right)^n\right\}.
\end{equation}
\end{lemma}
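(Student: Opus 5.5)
Truncate $K$ to a ball, recenter, and rescale. Fix $t_0$ large and set $K_0=K\cap (Ct_0\sqrt n L_K) B_2^n$. Then $R(K_0)\ls C\sqrt n$ by \eqref{eq:universal-slicing}. By \eqref{eq:paouris-background}, applied with $X$ uniform on $K$,
$$ 1-|K_0|=\Pp\{|X|>Ct_0\sqrt nL_K\}\ls e^{-c\sqrt n}=:\varepsilon. $$
Since $K$ is centered, the barycenter $b_0$ of $K_0$ satisfies
$$ |K_0|\,b_0=-\int_{K\setminus K_0}x\,dx, \qquad |b_0|\ls \frac{1}{|K_0|}\E\big[|X|\mathds 1_{\{|X|>Ct_0\sqrt nL_K\}}\big]. $$
Integrating the Paouris tail gives $|b_0|\ls C\sqrt n\,e^{-c\sqrt n}$, which is exponentially small.

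Next, $K$ contains the ball $cB_2^n$ since $r(K)\gr c$ for isotropic bodies (a standard fact, also quoted in the Introduction). Because $K_0\supseteq K\cap (C\sqrt n)B_2^n\supseteq cB_2^n$, we have $K_0-b_0\subseteq K+|b_0|B_2^n\subseteq(1+|b_0|/c)K$. Now define
$$ \widehat K=|K_0|^{-1/n}(K_0-b_0). $$
This body is centered and has volume one, and
$$ \lambda_n:=|K_0|^{-1/n}(1+|b_0|/c)\ls(1-\varepsilon)^{-1/n}(1+Ce^{-c\sqrt n})\ls1+C'e^{-c\sqrt n}. $$
Thus $\widehat K\subseteq\lambda_nK$. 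The claim $\lambda_n\gr1$ can be arranged by replacing $\lambda_n$ with $\max\{1,\lambda_n\}$. The bound $R(\widehat K)\ls C\sqrt n$ is clear. Polarity turns $\widehat K\subseteq\lambda_nK$ into $K^\circ\subseteq\lambda_n\widehat K^\circ$, and $R(\widehat K)\ls C\sqrt n$ gives $\widehat K^\circ\supseteq (c/\sqrt n)B_2^n$.

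The tail estimate \eqref{eq:recentered-core-tail} follows from the endpoint inequality \eqref{eq:intro-small-support} applied to the centered volume-one body $C=\widehat K$. We have $\rho_{\widehat K^\circ}=h_{\widehat K}^{-1}$, so
$$ \sigma\{\rho_{\widehat K^\circ}\gr t\}=\sigma\{h_{\widehat K}\ls 1/t\}=\sigma\{h_{\widehat K}\ls s\sqrt n\}\ls (Cs)^n, \qquad s=\frac1{t\sqrt n}, $$
which is the claimed bound.

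The main obstacle is controlling the barycenter shift $b_0$ and getting a multiplicative inclusion into $K$ from an additive one. The shift bound needs Paouris' inequality integrated against $|x|$, not only the tail probability. The conversion $K+|b_0|B_2^n\subseteq(1+|b_0|/c)K$ needs $K$ to contain a ball of constant radius. If one prefers not to quote $r(K)\gr c$, it follows from centering and isotropicity: a centered body satisfies $\int_K\langle x,\theta\rangle_+^2\,dx\ls C h_K(\theta)^2$, so $h_K(\theta)\gr cL_K$. Equivalently, one could choose $t_0$ growing only mildly if needed. Everything else is routine.
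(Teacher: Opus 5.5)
Your proposal is correct and follows essentially the same route as the paper. You truncate $K$ at radius $\simeq\sqrt n\,L_K$, bound the barycenter shift by integrating Paouris' tail against $|x|$, and turn the additive shift into a multiplicative one via $r(K)\gr c$ (the paper derives this from the reverse H\"older inequality for $\langle X,\theta\rangle_+$). You then rescale to volume one and read off the tail estimate from centered Santal\'o. The only differences are cosmetic: your extra $\sqrt n$ factor in $|b_0|$ is absorbed into $e^{-c\sqrt n}$, and $\lambda_n\gr1$ holds automatically because $|K_0|\ls1$, so no $\max$ is needed.
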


\begin{proof}
Choose a sufficiently large absolute $A$ and put
$$ K_0=K\cap A\sqrt nL_KB_2^n, \qquad v=|K_0|, \qquad R_0=A\sqrt nL_K. $$
By \eqref{eq:paouris-background}, $1-v\ls e^{-c\sqrt n}$, and in particular $v\gr1/2$.
If $z$ is the barycenter of $K_0$, the fact that $K$ is centered gives
$$ z=\frac1v\int_{K_0}x\,dx=-\frac1v\int_{K\setminus K_0}x\,dx. $$
Consequently,
$$ |z|\ls\frac1v\E\left[|X|\mathds{1}_{\{|X|>R_0\}}\right]. $$
Integration of the tail gives
$$ \E\left[|X|\mathds{1}_{\{|X|>R_0\}}\right]=R_0\Pp\{|X|>R_0\}+\int_{R_0}^\infty\Pp\{|X|>s\}\,ds\ls CL_Ke^{-c\sqrt n}, $$
and therefore $|z|\ls CL_Ke^{-c\sqrt n}$.
On the other hand, for every $\theta\in S^{n-1}$, the one-dimensional reverse H\"older inequality for log-concave random variables (see, for example, \cite{BGVV-book}) applied to $\langle X,\theta\rangle$ gives
$$ h_K(\theta)\gr\E\langle X,\theta\rangle_+ =\frac12\E|\langle X,\theta\rangle|\gr cL_K. $$
Here the equality follows from $\E\langle X,\theta\rangle=0$ and the identity $y_+=(|y|+y)/2$.
Thus $r(K)\gr cL_K$ and $-z\in Ce^{-c\sqrt n}K$.
Therefore
$$ \widehat K=v^{-1/n}(K_0-z) \subseteq(1+Ce^{-c\sqrt n})K. $$
The body $\widehat K$ is centered, has volume one and satisfies $R(\widehat K)\ls C\sqrt nL_K\ls C\sqrt n$ by \eqref{eq:universal-slicing}.
Polarity gives \eqref{eq:recentered-core-polar-comparison}, while centered Santal\'o and polar integration yield \eqref{eq:recentered-core-tail}.
Consequently, every monotone positively homogeneous operation applied to $K^\circ$ is controlled, up to the factor $\lambda_n$, by the same operation applied to $\widehat K^\circ$.
\end{proof}

The following lemma is the spherical cap step underlying the low-$M^\ast$ and escape through a mesh methods of Pajor and Tomczak-Jaegermann~\cite{Pajor-Tomczak-Jaegermann-1986} and Gordon~\cite{Gordon-1988}.

\begin{lemma}\label{lem:cap-circumradius}
Let $D$ be a convex body in a $k$-dimensional Euclidean space $E$, $k\gr1$, and assume that $rB_E\subseteq D$.
If $R\gr r$ and $Ru\in D$ for some $u\in S_E$, then
$$ \rho_D(\theta)\gr\frac R2\qquad\text{whenever}\qquad |\theta-u|\ls\frac rR. $$
Moreover,
$$ \sigma_E\left\{\theta:\rho_D(\theta)\gr\frac R2\right\} \gr\frac12\left(\frac r{8R}\right)^{k-1}. $$
\end{lemma}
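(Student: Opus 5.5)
The plan is to use convexity of $D$ to show that $D$ contains a "cone" over the ball $rB_E$ with apex $Ru$, and then to estimate the measure of a spherical cap.

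\emph{Step 1: the pointwise claim.} Fix $\theta\in S_E$ with $|\theta-u|\ls r/R$ and set $x=\frac R2\theta$. Write
$$ x=\frac12(Ru)+\frac12\bigl(R(\theta-u)\bigr). $$
The vector $R(\theta-u)$ has norm at most $r$, so it lies in $rB_E\subseteq D$. Also $Ru\in D$. By convexity, $x$ is the midpoint of two points of $D$, hence $x\in D$. Therefore $\rho_D(\theta)\gr R/2$. This step needs neither $R\gr r$ nor any symmetry.

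\emph{Step 2: reduction to a cap.} By Step 1, the set $\{\rho_D\gr R/2\}$ contains the cap
$$ \{\theta\in S_E:|\theta-u|\ls\varepsilon\},\qquad \varepsilon=\frac rR\ls1. $$
It remains to show that in $S^{k-1}$ a cap of Euclidean radius $\varepsilon\in(0,1]$ has normalized measure at least $\frac12(\varepsilon/8)^{k-1}$.

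\emph{Step 3: the cap estimate.} The case $k=1$ is trivial: the cap contains $u$, so its measure is at least $1/2$.

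For $k\gr2$ I will use a standard packing argument. Take a maximal $\varepsilon$-separated set $N\subseteq S_E$. By maximality, the caps of radius $\varepsilon$ centred at the points of $N$ cover the sphere. By rotation invariance every such cap has the same measure, so each has measure at least $1/|N|$.

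To bound $|N|$, note that the balls of radius $\varepsilon/2$ centred at the points of $N$ are disjoint and contained in $(1+\varepsilon/2)B_E$. Comparing volumes gives
$$ |N|\ls\left(\frac{1+\varepsilon/2}{\varepsilon/2}\right)^k\ls\left(\frac3\varepsilon\right)^k. $$
This yields a lower bound $(\varepsilon/3)^k$, which has exponent $k$ instead of $k-1$.

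To recover the exponent $k-1$, I will run the same argument but count volume in a thin shell rather than in a ball. The points of $N$ lie on the sphere. Each ball of radius $\varepsilon/2$ around a point of $N$ meets the shell $\{1-\varepsilon/2\ls|x|\ls1+\varepsilon/2\}$ in volume at least $c^k\varepsilon^k\omega_k$, since it contains half of a ball of radius $\varepsilon/4$ lying within that shell. The shell itself has volume at most $k\,\varepsilon\,(1+\varepsilon/2)^{k-1}\omega_k$. Dividing gives
$$ |N|\ls k\,(C/\varepsilon)^{k-1}. $$
The extra factor $k$ is absorbed, because $8^{k-1}$ leaves plenty of room relative to the constants.

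An alternative is to integrate directly in polar coordinates around $u$. The Euclidean radius $\varepsilon$ corresponds to angular radius $\alpha=2\arcsin(\varepsilon/2)\gr\varepsilon$, and the cap measure is
$$ \frac{\int_0^\alpha\sin^{k-2}t\,dt}{\int_0^\pi\sin^{k-2}t\,dt}. $$
Using $\sin t\gr 2t/\pi$ on $[0,\pi/2]$, the numerator is at least $(2/\pi)^{k-2}\alpha^{k-1}/(k-1)$. The denominator is at most $\sqrt{2\pi/(k-1)}$, or is bounded directly for small $k$. The remaining work is to check that $(2/\pi)^{k-2}\sqrt{k-1}/\bigl(\sqrt{2\pi}(k-1)\bigr)\gr\frac12 8^{-(k-1)}$. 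This holds with room to spare, since $8\cdot 2/\pi>1$ beats the $1/\sqrt{k}$ factor.

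The only genuine obstacle is this bookkeeping of constants in the cap measure, in particular getting exponent $k-1$ rather than $k$. I would present the polar-coordinate computation, which is the cleanest route.
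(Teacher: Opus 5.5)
Your proof is correct and follows the paper's. Step~1 is exactly the paper's midpoint argument $\frac R2\theta=\frac12Ru+\frac12R(\theta-u)$, and the case $k=1$ is handled the same way. Where the paper simply cites the standard lower bound for the measure of a spherical cap (Ball's notes), you compute it yourself.

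There is one slip in that computation. The Wallis integral satisfies $\int_0^\pi\sin^{k-2}t\,dt\gr\sqrt{2\pi/(k-1)}$ (already $\pi>\sqrt{2\pi}$ for $k=2$), so your upper bound on the denominator is backwards. This is harmless. Use the trivial bound $\int_0^\pi\sin^{k-2}t\,dt\ls\pi$ together with your numerator estimate, which is legitimate because $\alpha\ls\pi/3$. This reduces the claim to $(16/\pi)^{k-1}\gr k-1$, which holds for all $k\gr2$.

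The same care is needed in your shell-packing alternative. The ``half ball of radius $\varepsilon/4$'' loses too much to reach $\frac12(\varepsilon/8)^{k-1}$ for small $k$. Use instead that the whole ball $x+\frac\varepsilon2B_E$ lies in the shell. This gives $|N|\ls2k(3/\varepsilon)^{k-1}$ and reduces the claim to $(8/3)^{k-1}\gr k$, which is true for all $k$.
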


\begin{proof}
If $|\theta-u|\ls r/R$, then $R(\theta-u)\in rB_E\subseteq D$.
Since $Ru\in D$ and $D$ is convex,
$$ \frac R2\theta=\frac12Ru+\frac12R(\theta-u)\in D. $$
For $k\gr2$, the usual estimate for the measure of a spherical cap, applied with Euclidean radius $r/(2R)$, gives
$$ \sigma_E\left\{\theta:|\theta-u|\ls\frac rR\right\}\gr\frac12\left(\frac r{8R}\right)^{k-1}; $$
see, for example, \cite[pp.~10--12]{Ball-1997}.
For $k=1$, the sphere consists of two points of mass $1/2$, and one of them is the extremal direction $u$.
\end{proof}

\begin{theorem}\label{th:master-amplification}
Let $(C,F)$ be a random pair such that $C$ is a convex body in $\R^n$ and $F\in G_{n,k}$, and assume that
$$ rB_2^n\subseteq C $$
for every realization.
Put $d=k-1$ and
$$ \overline\alpha(t)=\E\,\big(\sigma_F\{\theta\in S_F:\rho_C(\theta)\gr t\}\big). $$
Then, for every $s>0$,
\begin{equation}\label{eq:master-amplification}
 \Pp\{R(C\cap F)>s\} \ls2\sum_{\ell=0}^\infty \left(\frac{2^{\ell+4}s}{r}\right)^d \overline\alpha(2^{\ell-1}s).
\end{equation}
\end{theorem}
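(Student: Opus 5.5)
Throughout, $D=C\cap F$, viewed as a convex body in the $k$-dimensional space $F$. Since $rB_2^n\subseteq C$ for every realization, we have $rB_F\subseteq D$, and $\rho_D=\rho_C$ on $S_F$. The plan is a dyadic decomposition of the event $\{R(D)>s\}$ according to the size of $R(D)$, followed by Lemma~\ref{lem:cap-circumradius} and Markov's inequality on each dyadic piece.

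The first step disposes of small $s$. If $s<r$, every term on the right of \eqref{eq:master-amplification} with $2^{\ell-1}s\ls r$ has $\overline\alpha(2^{\ell-1}s)=1$, because $\rho_C\gr r$ on $S_F$. The $\ell=0$ term is then $2(16s/r)^d$. This is at least $1$ once $s\gr r/16$ (for $d\gr0$), so the bound is trivial in that range. For $s<r/16$ we have $R(D)\gr r>s$ always, so the left side equals $1$, and we need one term to be at least $1/2$. Choose $\ell_0$ with $r\ls 2^{\ell_0}s<2r$. Then $2^{\ell_0-1}s<r$, so $\overline\alpha=1$, and $(2^{\ell_0+4}s/r)^d\gr 16^d\gr1$. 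Hence the bound is trivial for all $s<r$, and from now on we assume $s\gr r$.

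For $s\gr r$, split
\[
\{R(D)>s\}=\bigcup_{\ell\gr0}E_\ell,\qquad E_\ell=\{2^\ell s<R(D)\ls 2^{\ell+1}s\}.
\]
Here $R(D)<\infty$ because $C$ is compact. Fix $\ell$ and work on $E_\ell$. Take $u\in S_F$ with $R(D)u\in D$. By convexity and $0\in D$, the point $R_\ell u$ lies in $D$, where $R_\ell=2^\ell s\gr r$. Lemma~\ref{lem:cap-circumradius}, applied in $E=F$ with $R=R_\ell$, gives
\[
\sigma_F\{\rho_C\gr 2^{\ell-1}s\}\gr\tfrac12\left(\frac{r}{2^{\ell+3}s}\right)^{d}.
\]

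It remains to integrate. Markov's inequality yields
\[
\Pp(E_\ell)\ls 2\left(\frac{2^{\ell+3}s}{r}\right)^d\E\,\sigma_F\{\rho_C\gr 2^{\ell-1}s\}=2\left(\frac{2^{\ell+3}s}{r}\right)^d\overline\alpha(2^{\ell-1}s).
\]
Summing over $\ell$ gives \eqref{eq:master-amplification}, even with $2^{\ell+3}$ in place of $2^{\ell+4}$. The stated constant $2^{\ell+4}$ leaves room for the lossier endpoint choice, namely using the scale $2^{\ell+1}s$ in the cap lemma, or for the small-$s$ case above.

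The main obstacle is measurability, that is, making sense of $\overline\alpha$ and of the events $E_\ell$ for a general random pair $(C,F)$. I would handle it by assuming, implicitly as the statement does, that $(C,F)\mapsto\sigma_F\{\rho_C\gr t\}$ and $R(C\cap F)$ are measurable. Upper semicontinuity of $\rho_C$ gives this under the standard Hausdorff-metric measurability of $C$. Everything else reduces directly to Lemma~\ref{lem:cap-circumradius}.
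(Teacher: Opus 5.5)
Your proof is correct and follows the paper's argument: the same dyadic events $\mathcal E_\ell=\{2^\ell s<R(C\cap F)\ls 2^{\ell+1}s\}$, Lemma~\ref{lem:cap-circumradius} on each event, Markov's inequality, and summation over $\ell$. The only difference is which radius you feed into the cap lemma. The paper's constant $2^{\ell+4}=8\cdot 2^{\ell+1}$ reflects taking $R=R(C\cap F)$, which always satisfies $R\gr r$ and is at most $2^{\ell+1}s$ on $\mathcal E_\ell$, so every $s>0$ is handled at once. Your choice $R=2^\ell s$ requires $s\gr r$ and hence a separate (correctly handled) case $s<r$, but in exchange it gains a factor $2^d$.
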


\begin{proof}
For $\ell\gr0$ put
$$ \mathcal E_\ell=\{2^\ell s<R(C\cap F)\ls2^{\ell+1}s\}. $$
On $\mathcal E_\ell$, Lemma~\ref{lem:cap-circumradius} gives
$$ \sigma_F\{\theta\in S_F:\rho_C(\theta)\gr2^{\ell-1}s\} \gr\frac12\left(\frac r{2^{\ell+4}s}\right)^d. $$
Markov's inequality therefore yields
$$ \Pp(\mathcal E_\ell) \ls2\left(\frac{2^{\ell+4}s}{r}\right)^d \overline\alpha(2^{\ell-1}s). $$
Summing over $\ell\gr0$ proves \eqref{eq:master-amplification}.
\end{proof}

\begin{corollary}\label{cor:master-polynomial}
Assume in addition that, for some $a,\beta>0$,
$$ \overline\alpha(t)\ls\min\left\{1,\left(\frac at\right)^\beta\right\},\qquad t>0. $$
If $\Delta=\beta-d>0$, then
\begin{equation}\label{eq:master-polynomial-exact}
 \Pp\{R(C\cap F)>s\} \ls\frac{2^{4d+\beta+1}}{1-2^{-\Delta}} \left(\frac ar\right)^d \left(\frac as\right)^\Delta.
\end{equation}
If moreover $\beta\gr2d$ and $\Delta\gr1$, then, for every $\tau\gr1$,
\begin{equation}\label{eq:master-polynomial-absolute}
 \Pp\left\{R(C\cap F)>Ca\left(\frac ar\right)^{d/\Delta}\tau\right\} \ls\tau^{-\Delta},
\end{equation}
where $C>0$ is absolute.
\end{corollary}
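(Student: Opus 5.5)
Both bounds follow by inserting the assumed tail bound into \eqref{eq:master-amplification} of Theorem~\ref{th:master-amplification} and summing the resulting series; the only work is in controlling the terms and the constants.

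\textbf{Step 1: the exact bound \eqref{eq:master-polynomial-exact}.} Discard the minimum and use only $\overline\alpha(t)\ls(a/t)^\beta$. The $\ell$-th term of \eqref{eq:master-amplification} is then at most
$$2\Big(\frac{2^{\ell+4}s}{r}\Big)^d\Big(\frac{a}{2^{\ell-1}s}\Big)^\beta =2^{4d+\beta+1}\Big(\frac ar\Big)^d\Big(\frac as\Big)^{\beta-d}2^{\ell(d-\beta)}.$$
This uses $s^d\cdot s^{-\beta}=s^{-\Delta}$ and $r^{-d}a^\beta=(a/r)^d a^{\Delta}$. Since $\Delta>0$, the factors $2^{-\ell\Delta}$ form a geometric series with sum $(1-2^{-\Delta})^{-1}$, and \eqref{eq:master-polynomial-exact} follows.

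\textbf{Step 2: the absolute form \eqref{eq:master-polynomial-absolute}.} Take $s=Ca(a/r)^{d/\Delta}\tau$. Then
$$\Big(\frac ar\Big)^d\Big(\frac as\Big)^\Delta=C^{-\Delta}\tau^{-\Delta},$$
so the right side of \eqref{eq:master-polynomial-exact} equals
$$\frac{2^{4d+\beta+1}}{1-2^{-\Delta}}C^{-\Delta}\tau^{-\Delta}.$$
We must choose $C$ absolute so that this is at most $\tau^{-\Delta}$.

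\textbf{Step 3: controlling the constant.}
\begin{itemize}
\item Since $\Delta\gr1$, we have $(1-2^{-\Delta})^{-1}\ls2$.
\item Since $\beta\gr2d$, we have $d\ls\beta-d=\Delta$, hence $4d\ls4\Delta$.
\end{itemize}
This handles the $2^{4d}$ factor, but the factor $2^{\beta}=2^{\Delta+d}\ls 2^{2\Delta}$ is also fine. So the whole prefactor is at most
$$2\cdot2\cdot2^{4\Delta}2^{2\Delta}\ls 2^{2+6\Delta}\ls 2^{8\Delta},$$
where the last step again uses $\Delta\gr1$. Choosing $C=2^8$ gives a bound of at most $\tau^{-\Delta}$.

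The hypothesis $\beta\gr2d$ enters exactly here: it is what lets the dimension-dependent factors $2^{4d}$ and $2^\beta$ be absorbed into $C^{\Delta}$. This is the only step needing care.

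The case $\tau\gr1$ is not even used, since the bound holds for all $\tau>0$. If one prefers to use the minimum with $1$ in the tail assumption, it only improves the small-$\ell$ terms, so it is unnecessary.
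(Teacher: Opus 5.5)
Your proposal is correct and follows the paper's proof. You insert the tail bound into \eqref{eq:master-amplification}, sum the geometric series in $2^{-\ell\Delta}$, and absorb the prefactor using $d\le\Delta$ and $\beta\le 2\Delta$; you go slightly further than the paper by making the absolute constant explicit ($C=2^8$) and by noting that $\Delta\ge1$ is what bounds $(1-2^{-\Delta})^{-1}$.
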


\begin{proof}
Insert the assumed bound into \eqref{eq:master-amplification}.
The $\ell$-th term is at most
$$ 2^{4d+\beta+1} \left(\frac ar\right)^d \left(\frac as\right)^\Delta2^{-\ell\Delta}. $$
Summation gives \eqref{eq:master-polynomial-exact}.
If $\beta\gr2d$, then $d/\Delta\ls1$ and $\beta/\Delta\ls2$, so all numerical factors in \eqref{eq:master-polynomial-exact} are absorbed by an absolute dilation of $s$.
\end{proof}

The use of negative moments to pass from one-dimensional small-ball information to diameters of random sections goes back to Klartag and Vershynin~\cite{Klartag-Vershynin-2007}; the formulation above records explicitly the residual exponent $\Delta=\beta-k+1$ for a random pair $(C,F)$.

The loss of $k-1$ is unavoidable: for $D=\conv(rB_E,Re_1)$, the directions on which $\rho_D$ is comparable to $R$ form a cap of measure comparable to $(r/R)^{k-1}$.

We now apply Theorem~\ref{th:master-amplification} to random sections.
Let $C\subseteq\R^n$ be a convex body and $F\in G_{n,k}$.
For $t>0$ put
$$ Z_t(F)=\sigma_F\{\theta\in S_F:\rho_C(\theta)\gr t\}. $$
The invariant probability measure on the incidence space gives
\begin{equation}\label{eq:grassmann-incidence}
 \int_{G_{n,k}}Z_t(F)\,d\nu_{n,k}(F) =\sigma\{\theta\in S^{n-1}:\rho_C(\theta)\gr t\}.
\end{equation}

\begin{proof}[Proof of Theorem~$\ref{th:projection-tail}$]
Let $\widehat K$ be the body from Lemma~\ref{lem:recentered-core} and set $C=\widehat K^\circ$.
Then
$$ C\supseteq\frac c{\sqrt n}B_2^n $$
and
$$ \sigma\{\rho_C\gr t\}\ls\min\left\{1,\left(\frac C{t\sqrt n}\right)^n\right\}. $$
Apply \eqref{eq:grassmann-incidence} and Corollary~\ref{cor:master-polynomial} to the pair $(C,F)$ with $ a\simeq r\simeq n^{-1/2}$, $\beta=n$ and $d=k-1.$
Then $\Delta=\beta-d=n-k+1$.
If $n\gr2(k-1)$, the estimate \eqref{eq:master-polynomial-absolute} gives the stronger bound without the exponential factor.
In general, \eqref{eq:master-polynomial-exact} gives
$$ \Pp\left\{R(C\cap F)>\frac C{\sqrt n}\exp\left(\frac{Ck}{\Delta}\right)\tau\right\}\ls\tau^{-\Delta}. $$
Indeed, $n=(k-1)+\Delta$, so the factor $2^{4(k-1)+n+1}$ in \eqref{eq:master-polynomial-exact} is absorbed after taking the $\Delta$-th root by $C\exp(Ck/\Delta)$.
Since $K^\circ\subseteq\lambda_nC$,
$$ R(K^\circ\cap F)\ls\lambda_nR(C\cap F). $$
Using $(P_FK)^\circ_F=K^\circ\cap F$ and $r(P_FK)=R(K^\circ\cap F)^{-1}$ proves \eqref{eq:projection-tail-all-dimensional}.
Writing $d=n-k$, the remaining assertions follow because $k/(d+1)$ is bounded in the stated ranges.
\end{proof}

A second estimate follows by applying Gordon's theorem to the set of directions on which the support function is small.
For a closed set $T\subseteq S^{n-1}$ put
$$ \omega(T)=\E\sup_{\theta\in T}\langle G,\theta\rangle, $$
and, for $t>0$,
$$ T_t(K)=\{\theta\in S^{n-1}:h_K(\theta)\ls t\sqrt n\}, \qquad \omega_K(t)=\omega(T_t(K)). $$

\begin{proposition}\label{prop:localized-width-reduction}
Let $K\subseteq\R^n$ contain the origin, let $1\ls d<n$, and let $F$ be Haar-distributed on $G_{n,n-d}$.
If $\omega_K(t)\ls c_0\sqrt d$ for a sufficiently small absolute constant $c_0>0$, then
$$ P_FK\supseteq t\sqrt nB_F $$
with probability at least $1-Ce^{-cd}$.
If $K$ is isotropic, then
\begin{equation}\label{eq:global-localized-width}
 \omega_K(t)\ls CtnM(K)\ls Ct\sqrt{n\ln(en)},
\end{equation}
and consequently
$$ P_FK\supseteq c\sqrt{\frac d{\ln(en)}}B_F $$
with the same probability estimate.
\end{proposition}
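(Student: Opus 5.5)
We argue by duality and Gordon's escape through a mesh theorem. Since $P_FK\supseteq t\sqrt nB_F$ holds exactly when $h_K(\theta)\gr t\sqrt n$ for every $\theta\in S_F$ (support functions of $P_FK$ on $F$ are restrictions of $h_K$), the inclusion fails precisely when $S_F$ meets $T_t(K)$. So the first assertion reduces to showing that a random subspace of codimension $d$ misses $T_t(K)$.

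Gordon's theorem gives exactly this. Let $a_d=\E|G_d|$ for a standard Gaussian $G_d$ in $\R^d$, so $a_d\simeq\sqrt d$. If $\omega(T)<a_d$, then a Haar-distributed $F\in G_{n,n-d}$ satisfies $F\cap T=\emptyset$ with high probability. In the quantitative form (Gaussian concentration for the Lipschitz functional $\min_{\theta\in T}|\Gamma\theta|$ of a $d\times n$ Gaussian matrix $\Gamma$, with kernel a uniformly random $(n-d)$-subspace), we have
$$\min_{\theta\in T}|\Gamma\theta|\gr a_d-\omega(T)-s$$
with probability at least $1-e^{-s^2/2}$. Taking $c_0$ small so that $\omega_K(t)\ls a_d/3$, and $s=a_d/3\simeq\sqrt d$, the kernel avoids $T_t(K)$ with probability at least $1-e^{-cd}$. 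The set $T_t(K)$ is closed by continuity of $h_K$, so the theorem applies.

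For isotropic $K$, the bound \eqref{eq:global-localized-width} comes from the pointwise inequality
$$\langle G,\theta\rangle\ls h_{K^\circ}(G)\,h_K(\theta)=p_K(G)\,h_K(\theta),$$
valid since $K$ contains the origin. For $\theta\in T_t(K)$ this gives $\langle G,\theta\rangle\ls t\sqrt n\,p_K(G)$. Taking expectations and writing $G=|G|\phi$ with $\E|G|\ls\sqrt n$, we get
$$\omega_K(t)\ls t\sqrt n\,\E|G|\,M(K)\ls tnM(K).$$
Then \eqref{eq:isotropic-spherical-means} gives $M(K)\ls C\sqrt{\ln(en)/n}$, hence $\omega_K(t)\ls Ct\sqrt{n\ln(en)}$. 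If $T_t(K)$ is empty, the bound is trivial.

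Finally, choose $t=c\sqrt{d/(n\ln(en))}$ with $c$ small enough that $Ct\sqrt{n\ln(en)}\ls c_0\sqrt d$. Then $t\sqrt n=c\sqrt{d/\ln(en)}$, and the first part gives $P_FK\supseteq c\sqrt{d/\ln(en)}\,B_F$ with probability at least $1-Ce^{-cd}$. The main obstacle is quoting Gordon's theorem in the right form, with the right probability estimate and the identification of the kernel of a Gaussian matrix with a Haar random subspace. The remaining steps are the duality $h_{P_FK}=h_K|_F$ and the Cauchy–Schwarz-type bound above.
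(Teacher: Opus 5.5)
Your proposal is correct and follows the paper's proof. You use Gordon's escape-through-a-mesh theorem to get $F\cap T_t(K)=\varnothing$ with probability $1-Ce^{-cd}$; you spell out the Gaussian-matrix form with concentration, where the paper simply cites Gordon's Corollary~3.4. Your pointwise bound $\langle G,\theta\rangle\ls p_K(G)h_K(\theta)$ is the paper's inclusion $T_t(K)\subseteq t\sqrt n\,K^\circ\cap B_2^n$, and it gives $\omega_K(t)\ls t\sqrt n\,\E p_K(G)\ls tnM(K)$.

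One small correction: the inclusion fails exactly when $S_F$ meets $\{h_K<t\sqrt n\}$, so ``precisely when $S_F$ meets $T_t(K)$'' should read ``only if''. That is the direction you actually use, so the argument is unaffected.
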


\begin{proof}
Gordon's theorem~\cite[Corollary~3.4]{Gordon-1988} implies that $F\cap T_t(K)=\varnothing$ outside a set of probability at most $Ce^{-cd}$ whenever $\omega_K(t)\ls c_0\sqrt d$.
On this event, $h_{P_FK}=h_K$ on $F$ and hence $P_FK\supseteq t\sqrt nB_F$.

For isotropic $K$, put $L_t=t\sqrt nK^\circ\cap B_2^n$.
Then $T_t(K)\subseteq L_t$ and
$$ \omega_K(t)\ls\E h_{L_t}(G) \ls t\sqrt n\,\E p_K(G) \ls CtnM(K). $$
The estimate $M(K)\ls C\sqrt{\ln(en)/n}$ gives \eqref{eq:global-localized-width}; take $t=c\sqrt{d/(n\ln(en))}$.
\end{proof}

Combining Theorem~\ref{th:projection-tail} and Proposition~\ref{prop:localized-width-reduction}, a Haar-distributed $F\in G_{n,n-d}$ satisfies
\begin{equation}
 r(P_FK)\gr c\sqrt n\max\left\{ \exp\left(-\frac{C(n-d)}{d+1}\right), \sqrt{\frac{d}{n\ln(en)}} \right\}
\end{equation}
with probability at least $1-Ce^{-cd}$.

For the cube, the Gaussian width of the set $T_t(Q_n)$ is computed in Lemma~\ref{lem:l1-spherical-width} in Appendix~\ref{appendix:special-calculations}.

\begin{proposition}\label{prop:cube-localized-width}
Let $Q_n=[-1/2,1/2]^n$.
For $ \frac1{2\sqrt n}\ls t\ls\frac12, $ one has
$$ \omega_{Q_n}(t) \simeq t\sqrt{n\ln\left(e+\frac1{t^2}\right)}. $$
For $0<t<1/(2\sqrt n)$, the set $T_t(Q_n)$ is empty.
\end{proposition}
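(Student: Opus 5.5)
Since $h_{Q_n}(\theta)=\frac12\|\theta\|_1$, we have
$$T_t(Q_n)=\{\theta\in S^{n-1}:\|\theta\|_1\ls 2t\sqrt n\}.$$
Put $s=2t\sqrt n$. The emptiness claim is immediate, because $\|\theta\|_1\gr|\theta|=1$ on the sphere; so $T_t$ is empty when $s<1$, i.e.\ $t<1/(2\sqrt n)$. For $1\ls s\ls n$, i.e.\ the stated range of $t$, the task is to compute the Gaussian width of $S^{n-1}\cap sB_1^n$, which is the content of Lemma~\ref{lem:l1-spherical-width}. The target is
$$\omega\big(S^{n-1}\cap sB_1^n\big)\simeq s\sqrt{\ln(e+n/s^2)},$$
which matches $t\sqrt{n\ln(e+1/t^2)}$ since $n/s^2=1/(4t^2)$. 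The regime $s\gr\sqrt n$, where the target is $\simeq\sqrt n$, is handled separately at the end.

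\emph{Upper bound.} I would use the standard inclusion
$$S^{n-1}\cap sB_1^n\subseteq B_2^n\cap sB_1^n\subseteq 2\conv\{\theta\in S^{n-1}:|\operatorname{supp}\theta|\ls \lceil s^2\rceil\}.$$
This gives
$$\omega\ls 2\E\max_{|I|\ls m}|P_IG|,\qquad m=\lceil s^2\rceil\wedge n.$$
That expectation is at most $C\sqrt{m\ln(en/m)}$, by a union bound over the $\binom nm$ subsets combined with Gaussian concentration of $|P_IG|$, or directly by summing the largest $m$ order statistics of the $|g_i|$. Since $m\simeq s^2$, this is $\simeq s\sqrt{\ln(en/s^2)}$. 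When $s^2\gr n$, the trivial bound $\E|G|\ls\sqrt n$ suffices instead.

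\emph{Lower bound.} Take $m=\lfloor s^2\rfloor\gr1$ (with $m\ls n$) and consider the vectors $\theta$ with $m$ nonzero coordinates, each equal to $\pm m^{-1/2}$. These lie on the sphere and satisfy $\|\theta\|_1=\sqrt m\ls s$. For each $G$, choose $\theta$ supported on the $m$ largest $|g_i|$, with matching signs. This gives
$$\omega\gr m^{-1/2}\,\E\sum_{i\ls m}g^*_i,$$
where $g^*_1\gr g^*_2\gr\cdots$ are the decreasing rearrangement of the $|g_i|$. The standard estimate $g_i^*\gtrsim\sqrt{\ln(en/i)}$ for $i\ls n/2$, valid in expectation, yields $\sum_{i\ls m}\E g_i^*\gtrsim m\sqrt{\ln(en/m)}$ for $m\ls n/2$. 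Hence $\omega\gtrsim\sqrt m\sqrt{\ln(en/m)}\simeq s\sqrt{\ln(e+n/s^2)}$.

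\emph{Regime $s\gr\sqrt{n/2}$.} Here I use instead that $S^{n-1}\cap sB_1^n$ contains $n^{-1/2}\{-1,1\}^n$. This gives
$$\omega\gr n^{-1/2}\,\E\|G\|_1\simeq\sqrt n,$$
which matches the target, since there $\ln(e+n/s^2)\simeq1$ and $s\simeq\sqrt n$.

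The main obstacle is the convex-hull inclusion in the upper bound together with the order-statistics lower bound, both of which must hold uniformly across regimes. Both are classical, however, and the only real bookkeeping is matching the logarithmic factors across the cases $s^2\ls n/2$ and $s^2>n/2$. Comparability up to absolute constants at the boundary $s\simeq\sqrt n$ closes the argument.
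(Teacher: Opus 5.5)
Your argument is essentially the paper's. The paper also reduces, via $T_t(Q_n)=\{\theta\in S^{n-1}:\|\theta\|_1\ls2t\sqrt n\}$, to Lemma~\ref{lem:l1-spherical-width} with $a=2t\sqrt n$. It gets the upper bound from $B_2^n\cap aB_1^n\subseteq2\conv\bigcup_{|I|\ls m}B_2^I$ with $m\simeq a^2$, and both bounds from $\E\max_{|I|=m}|P_IG|\simeq\sqrt{m\ln(e+n/m)}$. Your flat-sign-vector and order-statistics lower bound is a harmless variant of using all $m$-sparse unit vectors.

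Two bookkeeping slips should be fixed.

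\emph{The range of $s$.} The condition $t\ls1/2$ means $s=2t\sqrt n\ls\sqrt n$, not $s\ls n$, so the stated range is exactly the lemma's range $1\ls a\ls\sqrt n$. This matters: for $s>\sqrt n$ the set $T_t(Q_n)$ is the whole sphere, so its width stays $\simeq\sqrt n$ while $s\sqrt{\ln(e+n/s^2)}\simeq s$. The equivalence is false there, and your remark that ``the target is $\simeq\sqrt n$'' for $s\gr\sqrt n$ holds only at $s=\sqrt n$.

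\emph{The sign vectors.} The set $n^{-1/2}\{-1,1\}^n$ lies in $sB_1^n$ only when $s\gr\sqrt n$, not for all $s\gr\sqrt{n/2}$. So your separate lower-bound step fails for $\sqrt{n/2}\ls s<\sqrt n$. The repair uses your own sparse construction with $m=\lfloor n/2\rfloor$: these vectors satisfy $\|\theta\|_1=\sqrt m\ls s$, and they give $\omega\gr c\sqrt n$, which matches the target in that range.
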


The proof is given in Appendix~\ref{appendix:special-calculations}.

\begin{corollary}\label{cor:cube-projection}
Let $Q_n=[-1/2,1/2]^n$, let $1\ls d<n$, and let $F$ be Haar-distributed on $G_{n,n-d}$.
Then
$$ P_FQ_n\supseteq c\sqrt{\frac{d}{\ln(en/d)}}B_F $$
with probability at least $1-Ce^{-cd}$.
\end{corollary}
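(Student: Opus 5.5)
The plan is to apply the first part of Proposition~\ref{prop:localized-width-reduction} to $K=Q_n$, with the localized Gaussian width taken from Proposition~\ref{prop:cube-localized-width}. The only real work is to choose $t$ so that $\omega_{Q_n}(t)\ls c_0\sqrt d$, and to check that this $t$ falls in the range where Proposition~\ref{prop:cube-localized-width} holds. Once that is done, the first part of Proposition~\ref{prop:localized-width-reduction} gives
$$ P_FQ_n\supseteq t\sqrt nB_F $$
with probability at least $1-Ce^{-cd}$, and it remains to identify $t\sqrt n$ with $\sqrt{d/\ln(en/d)}$ up to constants.

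Concretely, I would set
$$ t=c_1\sqrt{\frac{d}{n\ln(en/d)}} $$
with a small absolute $c_1$. Since $d<n$, we have $\ln(en/d)\gr1$, so $t\ls c_1\ls 1/2$. The lower constraint $t\gr 1/(2\sqrt n)$ is not automatic, because $d/\ln(en/d)$ may be small. This splits into two cases.

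\emph{Case 1: $t\gr 1/(2\sqrt n)$.} Proposition~\ref{prop:cube-localized-width} gives $\omega_{Q_n}(t)\ls Ct\sqrt{n\ln(e+1/t^2)}$. Here
$$ 1/t^2=\frac{n\ln(en/d)}{c_1^2d}, $$
so $\ln(e+1/t^2)\ls C'\ln(en/d)$, using $\ln\ln(en/d)\ls\ln(en/d)$ and absorbing $\ln(1/c_1^2)$ into the constant. Therefore
$$ \omega_{Q_n}(t)\ls Cc_1\sqrt{\frac{d}{\ln(en/d)}}\sqrt{C'\ln(en/d)}=C''c_1\sqrt d\ls c_0\sqrt d $$
for $c_1$ small enough. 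This yields $P_FQ_n\supseteq c_1\sqrt{d/\ln(en/d)}\,B_F$.

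\emph{Case 2: $t<1/(2\sqrt n)$.} By Proposition~\ref{prop:cube-localized-width}, $T_t(Q_n)$ is empty. Its width is then $-\infty$, or $0$ by convention, and the hypothesis holds trivially. More simply, the conclusion is deterministic, because $Q_n\supseteq\frac12B_2^n$ gives $P_FQ_n\supseteq\frac12B_F$. In this case $c_1\sqrt{d/\ln(en/d)}<1/2$, so the claimed inclusion follows with probability one.

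The main obstacle is the logarithm comparison in Case 1: $\ln(e+1/t^2)$ must be bounded by $\ln(en/d)$ uniformly, including when $d$ is close to $n$. When $d\simeq n$, $t$ is of constant order and both logarithms are bounded, so the comparison holds there too. Everything else is a direct substitution into the earlier results.
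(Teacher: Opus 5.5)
Your proposal is correct and matches the paper's proof: the same choice $t=c_1\sqrt{d/(n\ln(en/d))}$, the same case split at $t=1/(2\sqrt n)$, and the same substitution of Proposition~\ref{prop:cube-localized-width} into Proposition~\ref{prop:localized-width-reduction}; in the degenerate case the paper notes $T_t(Q_n)=\varnothing$, which is equivalent to your deterministic inclusion $Q_n\supseteq\frac12B_2^n$. One point needs more careful phrasing: the constant in $\ln(e+1/t^2)\ls C'\ln(en/d)$ depends on $c_1$ (it is of order $1+\ln(1/c_1)$), so what you really need is that $c_1^2\bigl(1+\ln(1/c_1)\bigr)\to0$ as $c_1\to0$, which of course holds.
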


\begin{proof}
Put
$$ t=c_1\sqrt{\frac{d/n}{\ln(en/d)}}. $$
If $t<1/(2\sqrt n)$, then $T_t(Q_n)=\varnothing$ and the conclusion is immediate.
Otherwise, Proposition~\ref{prop:cube-localized-width} gives
$$ \omega_{Q_n}(t)^2 \ls Ct^2n\ln\left(e+\frac1{t^2}\right) \ls Cc_1^2d. $$
Choose $c_1$ sufficiently small and apply Proposition~\ref{prop:localized-width-reduction}.
\end{proof}

The same argument applies to every body which contains a coordinate cube.

\begin{proposition}\label{prop:cube-dominated-projection-profile}
Let $K\subseteq\R^n$ be a convex body containing the origin, assume that
$$ K\supseteq aB_\infty^n $$
for some $a>0$, let $1\ls d<n$, and let $F$ be Haar-distributed on $G_{n,n-d}$.
Then
\begin{equation}\label{eq:cube-dominated-projection-profile}
 r(P_FK)\gr ca\max\left\{1, \sqrt{\frac d{\ln(en/d)}}\right\}
\end{equation}
with probability at least $1-Ce^{-cd}$.
\end{proposition}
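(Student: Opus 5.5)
Since $K\supseteq aB_\infty^n$ and projections preserve inclusions, $P_FK\supseteq aP_F B_\infty^n=2aP_FQ_n$. Hence $r(P_FK)\gr 2a\,r(P_FQ_n)$ for every $F$, and it is enough to prove \eqref{eq:cube-dominated-projection-profile} for $K=Q_n$ with $a=1/2$. This reduction is deterministic, so the probability estimate carries over unchanged.

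For the cube I would handle the two terms of the maximum separately. The lower bound by a constant is deterministic. Since $P_FQ_n\supseteq P_F(\tfrac12B_2^n)=\tfrac12B_F$, we get $r(P_FQ_n)\gr 1/2$ for every $F$. Equivalently, $h_{Q_n}(\theta)=\tfrac12\|\theta\|_1\gr\tfrac12$ on $S^{n-1}$, and $h_{P_FQ_n}=h_{Q_n}$ on $F$. The second term, $\sqrt{d/\ln(en/d)}$, is exactly Corollary~\ref{cor:cube-projection}, which holds with probability at least $1-Ce^{-cd}$. Since the first bound holds on every realization, the maximum of the two bounds holds on the same event.

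Combining, we obtain $r(P_FK)\gr a\,c\max\{1,\sqrt{d/\ln(en/d)}\}$ after adjusting $c$. No step is a real obstacle. The only point to watch is that the constant in Corollary~\ref{cor:cube-projection} is absolute, so that it absorbs the factor $2a$ versus $a$ uniformly. In the regime where $t<1/(2\sqrt n)$, which appears in the proof of that corollary, the conclusion there is deterministic anyway.
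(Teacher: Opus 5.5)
Your proof is correct and is essentially the paper's argument. The paper uses $h_K\gr a\|\cdot\|_1$ to get $T_t(K)\subseteq\{\theta\in S^{n-1}:\|\theta\|_1\ls t\sqrt n/a\}$, then reruns Lemma~\ref{lem:l1-spherical-width} and Proposition~\ref{prop:localized-width-reduction} directly for $K$; this is the set-level form of your reduction $P_FK\supseteq 2aP_FQ_n$ followed by Corollary~\ref{cor:cube-projection}, and the deterministic bound $r(P_FK)\gr a$ comes from $aB_\infty^n\supseteq aB_2^n$ in both.
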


The proof is given in Appendix~\ref{appendix:special-calculations}.

For a volume-one isotropic body $K$ and Haar-distributed $F\in G_{n,n-d}$ put
$$ Y_K(F)=\sqrt n\,R(K^\circ\cap F). $$
For $1\ls d<n$ and $q>0$ define
$$ \mathfrak A_{n,d}(q)=\sup_K\|Y_K\|_{L_{q,\infty}}, $$
where the $L_{q,\infty}$ quasi-norm is taken with respect to the randomness of $F$ and the supremum is over all volume-one isotropic bodies.

The cube gives a deterministic lower bound for $\mathfrak A_{n,d}(q)$.
The relevant width estimate goes back to Kashin~\cite{Kashin-1975} and Garnaev--Gluskin~\cite{Garnaev-Gluskin-1984}; we use the sharp formulation \cite[Theorem~1.1]{Foucart-Pajor-Rauhut-Ullrich-2010}.

\begin{proposition}\label{prop:cube-gelfand-obstruction}
For every $1\ls d<n$, every $q>0$ and every subspace $F\subseteq\R^n$ of codimension $d$,
\begin{equation}\label{eq:cube-gelfand-obstruction}
 \sqrt n\,R(Q_n^\circ\cap F) \gr c\sqrt n\min\left\{1, \sqrt{\frac{\ln(en/d)}d}\right\}.
\end{equation}
Consequently,
\begin{equation}\label{eq:weak-profile-cube-lower}
 \mathfrak A_{n,d}(q) \gr c\sqrt n\min\left\{1, \sqrt{\frac{\ln(en/d)}d}\right\}.
\end{equation}
\end{proposition}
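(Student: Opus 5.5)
The plan is to translate the Gelfand width lower bound for $B_1^n$ into the statement via polarity. Since $Q_n=[-1/2,1/2]^n=\frac12B_\infty^n$, we have $Q_n^\circ=2B_1^n$. Hence, for every subspace $F$ of codimension $d$,
$$ R(Q_n^\circ\cap F)=2R(B_1^n\cap F)\gr2c_d(B_1^n,\ell_2^n), $$
where $c_d$ is the Gelfand width in the convention fixed in Section~\ref{section:2}. It therefore suffices to bound $c_d(B_1^n,\ell_2^n)$ from below.

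For this I would invoke \cite[Theorem~1.1]{Foucart-Pajor-Rauhut-Ullrich-2010}, which gives
$$ c_d(B_1^n,\ell_2^n)\simeq\min\left\{1,\sqrt{\frac{\ln(en/d)}d}\right\}\qquad(1\ls d<n). $$
The section notes that their convention (kernels of $d\times n$ matrices) is equivalent to ours. One small point: a kernel of a $d\times n$ matrix has codimension at most $d$, and a larger subspace only increases $R$. So the infimum over codimension exactly $d$ equals theirs, and the lower bound transfers.

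Multiplying by $\sqrt n$ yields \eqref{eq:cube-gelfand-obstruction}, deterministically for every $F$. This is the only nontrivial input and it is quoted, so I expect no real obstacle. If I wanted a self-contained check of the easy regime, note that $R(B_1^n\cap F)\gr n^{-1/2}$ trivially: for any $x\in F$ one has $|x|\gr\|x\|_1/\sqrt n$. The quoted theorem, however, is what gives the correct order.

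For \eqref{eq:weak-profile-cube-lower}, I take $K=Q_n$, which is volume-one and isotropic. By the first part, $Y_{Q_n}(F)\gr A:=c\sqrt n\min\{1,\sqrt{\ln(en/d)/d}\}$ for every $F$. For any $s<A$ we get $\Pp\{Y_{Q_n}>s\}=1$, so $\|Y_{Q_n}\|_{L_{q,\infty}}\gr\sup_{s<A}s=A$. Since $\mathfrak A_{n,d}(q)$ is the supremum over all such bodies, the bound follows for every $q>0$. The only care needed is to use strict inequality in the tail, which is handled by letting $s\uparrow A$ (or by halving the constant).
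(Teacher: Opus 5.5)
Your proposal is correct and matches the paper's proof: the polarity $Q_n^\circ=2B_1^n$ reduces the claim to the Gelfand width lower bound for $B_1^n$ from Foucart--Pajor--Rauhut--Ullrich, and since that bound holds for every codimension-$d$ subspace $F$, taking $K=Q_n$ gives the weak-profile estimate for every $q>0$. Your remarks on the codimension convention and on letting $s\uparrow A$ in the $L_{q,\infty}$ quasi-norm spell out details the paper leaves implicit.
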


The proof is given in Appendix~\ref{appendix:special-calculations}.

We next prove Theorem~\ref{th:unconditional-extremal-profile}.
The cube containment used below goes back to Bobkov and Nazarov~\cite{Bobkov-Nazarov-2003}; we use the formulation in~\cite[Proposition~2.3]{Giannopoulos-Hartzoulaki-Tsolomitis-2005}.

\begin{proof}[Proof of Theorem~$\ref{th:unconditional-extremal-profile}$]
Let $K\in\mathcal I_n^{\rm unc}$.
By~\cite[Proposition~2.3]{Giannopoulos-Hartzoulaki-Tsolomitis-2005},
\begin{equation}\label{eq:unconditional-cube-inclusion}
 K\supseteq cB_\infty^n.
\end{equation}
Proposition~\ref{prop:cube-dominated-projection-profile} now gives
$$ r(P_FK)\gr c\max\left\{1, \sqrt{\frac d{\ln(en/d)}}\right\} $$
with probability at least $1-Ce^{-cd}$.
For all sufficiently large $d$ this probability is at least $1/2$; for bounded $d$, the deterministic inclusion \eqref{eq:unconditional-cube-inclusion} gives the same conclusion after changing the absolute constant.
This proves the lower estimate in \eqref{eq:unconditional-extremal-profile-estimate}.

For the reverse estimate, take $K=Q_n$.
Proposition~\ref{prop:cube-gelfand-obstruction} gives, for every codimension-$d$ subspace $F$,
$$ R(Q_n^\circ\cap F) \gr c\min\left\{1, \sqrt{\frac{\ln(en/d)}d}\right\}. $$
Since $r(P_FQ_n)=R(Q_n^\circ\cap F)^{-1}$, we obtain
$$ r(P_FQ_n) \ls C\max\left\{1, \sqrt{\frac d{\ln(en/d)}}\right\} $$
for every $F$.
This proves the upper estimate and completes the proof.
\end{proof}

The comparison between $\mathfrak A_{n,d}(d)$ and $\mathfrak A_{n,d}(d+1)$ is given in Appendix~\ref{appendix:special-calculations}.

\section{Random intersections and independent rotations}\label{section:4}

Comparisons between sections and intersections of a few rotations go back to V.~D.~Milman~\cite{Milman-1991} and Giannopoulos and V.~D.~Milman~\cite{Giannopoulos-Milman-1997}.
We first recall two known estimates for the intersection of two random rotations.
A result of Klartag and E.~Milman~\cite{Klartag-EMilman-2012b}, together with \cite[Theorem~3.3]{GPT-survey-2025}, implies that for every symmetric isotropic convex body $K$ a Haar rotation $U$ satisfies
$$ K^\circ\cap U(K^\circ)\subseteq\frac C{\sqrt n}B_2^n $$
with probability at least $1-\exp(-cn)$.
The local-to-global theorem of Rudelson and Vershynin~\cite[Appendix]{Vershynin-2006}, combined with the low $M^\ast$ estimate, gives the related bound
$$ R(K^\circ\cap U(K^\circ))\ls CM(K)\ls C\sqrt{\frac{\ln(en)}n} $$
with the same type of probability, where the last inequality follows from \eqref{eq:isotropic-spherical-means}.
Related estimates may be found in \cite{Milman-1991,Giannopoulos-Milman-Tsolomitis-2005,Litvak-Pajor-Tomczak-2006,Brazitikos-Stavrakakis-2014}.
Our proof of Theorem~\ref{th:m-rotation-main} is different and starts from an exact radial-tail identity.

A star body is a compact set, star shaped about the origin, with positive continuous radial function.
For such a body $C$ put
$$ \alpha_C(t)=\sigma\{\theta:\rho_C(\theta)\gr t\}. $$

\begin{lemma}
Let $C_1,\ldots,C_m$ be star bodies, let $U_1,\ldots,U_m$ be independent Haar rotations and put $D_m=\bigcap_{j=1}^mU_jC_j$.
If
$$ \alpha_{C_j}(t)=\sigma\{\theta:\rho_{C_j}(\theta)\gr t\}, $$
then, for every $p>0$,
\begin{equation}\label{eq:exact-radial-tail-identity}
 \E\int_{S^{n-1}}\rho_{D_m}(\theta)^p\,d\sigma(\theta) =p\int_0^\infty t^{p-1}\prod_{j=1}^m\alpha_{C_j}(t)\,dt.
\end{equation}
The same identity holds with $U_1=I_n$.
In particular, if the $C_j$ are polars of centered volume-one bodies, then, for $0<p<mn$,
$$ \left(\E\int_{S^{n-1}}\rho_{D_m}^p\,d\sigma\right)^{1/p} \ls\frac C{\sqrt n}\left(\frac{mn}{mn-p}\right)^{1/p}. $$
\end{lemma}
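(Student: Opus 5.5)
The identity rests on three facts: a layer-cake formula, Fubini, and the observation that for a fixed direction $\theta$ the points $U_j^{-1}\theta$ are independent and uniformly distributed on the sphere.

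First, since $\rho_{D_m}=\min_j\rho_{U_jC_j}$ and $\rho_{UC}(\theta)=\rho_C(U^{-1}\theta)$, fix $\theta\in S^{n-1}$ and write the layer-cake formula
$$\rho_{D_m}(\theta)^p=p\int_0^\infty t^{p-1}\mathds 1\{\rho_{D_m}(\theta)\gr t\}\,dt=p\int_0^\infty t^{p-1}\prod_{j=1}^m\mathds 1\{\rho_{C_j}(U_j^{-1}\theta)\gr t\}\,dt.$$
Everything is non-negative, so Tonelli lets us take the expectation over $U_1,\ldots,U_m$ and integrate over $\theta$ and $t$ in any order. For fixed $\theta$, the points $U_j^{-1}\theta$ are independent. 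Each is $\sigma$-distributed, because Haar measure is invariant under inversion and the orbit map $U\mapsto U\theta$ pushes $\nu$ forward to $\sigma$. Hence the expectation of the product equals $\prod_j\alpha_{C_j}(t)$. This no longer depends on $\theta$, so the $\sigma$-integral is trivial and \eqref{eq:exact-radial-tail-identity} follows.

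If $U_1=I_n$, the $j=1$ factor becomes $\mathds 1\{\rho_{C_1}(\theta)\gr t\}$, which is deterministic given $\theta$. Integrating it over $\theta$ gives $\alpha_{C_1}(t)$, while the other factors are independent and unaffected. We only need to integrate over $\theta$ before taking out the $t$-integral, and Tonelli again permits this. Measurability is harmless, since radial functions of star bodies are continuous.

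For the quantitative part, let $C_j=K_j^\circ$ with $K_j$ centered of volume one. Then $\rho_{C_j}=h_{K_j}^{-1}$, and the Santal\'o tail \eqref{eq:intro-small-support} (equivalently the derivation at the start of Section~\ref{section:3}) gives
$$\alpha_{C_j}(t)=\sigma\{h_{K_j}\ls 1/t\}\ls\min\{1,(a/t)^n\},\qquad a=\frac{C}{\sqrt n}.$$
Inserting this into the identity, the right-hand side is at most
$$p\int_0^a t^{p-1}\,dt+p\,a^{mn}\int_a^\infty t^{p-1-mn}\,dt=a^p+\frac{p}{mn-p}\,a^p=a^p\,\frac{mn}{mn-p},$$
where the second integral converges precisely because $p<mn$. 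Taking $p$-th roots gives the claim. There is no real obstacle; the only point needing care is the inversion/orbit invariance of Haar measure, which is what makes each factor an independent copy of $\alpha_{C_j}(t)$.
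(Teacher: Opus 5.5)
Your proposal is correct and follows the paper's argument: the pointwise factorization $\Pp\{\rho_{D_m}(\theta)\gr t\}=\prod_j\alpha_{C_j}(t)$ from independence and rotation invariance, then layer cake plus Tonelli, integrating the deterministic $j=1$ indicator over $\theta$ when $U_1=I_n$, and finally inserting the Santal\'o tail bound $\alpha_{C_j}(t)\ls\min\{1,(C/(t\sqrt n))^n\}$. Your explicit evaluation of the final integral as $a^p\,mn/(mn-p)$ with $a=C/\sqrt n$ is the computation the paper leaves to the reader.
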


\begin{proof}
If all rotations are random, then for fixed $\theta$, independence and rotational invariance give
$$ \Pp\{\rho_{D_m}(\theta)\gr t\}=\prod_{j=1}^m\alpha_{C_j}(t). $$
Layer cake and Tonelli prove \eqref{eq:exact-radial-tail-identity}.
If $U_1=I_n$, the pointwise probability equals
$$ \mathds{1}_{\{\rho_{C_1}(\theta)\gr t\}}\prod_{j=2}^m\alpha_{C_j}(t), $$
and integration in $\theta$ gives the same product.
For centered volume-one bodies, \eqref{eq:intro-small-support} gives $\alpha_{C_j}(t)\ls\min\{1,(C/(t\sqrt n))^n\}$, and integration yields the last estimate.
\end{proof}

\begin{theorem}\label{th:mixed-rotation-projection}
There exist absolute constants $c,C>0$ such that the following holds.
Let $n\gr2$, $m\gr2$, let $K_1,\ldots,K_m\subseteq\R^n$ be isotropic convex bodies, and let $U_1=I_n,U_2,\ldots,U_m$ be independent Haar rotations.
Independently, let $F$ be Haar-distributed on $G_{n,k}$, where $1\ls k\ls n$.
Put
$$ H_m=\conv\{U_1K_1,\ldots,U_mK_m\}, \qquad q_{m,k}=mn-k+1. $$
Then, for every $\tau\gr1$,
\begin{equation}\label{eq:mixed-rotation-projection}
 \Pp\left\{P_FH_m\not\supseteq\frac{c\sqrt n}{\tau}B_F\right\}
 \ls\tau^{-q_{m,k}}.
\end{equation}
Equivalently,
$$ \Pp\left\{R\left(F\cap\bigcap_{j=1}^mU_jK_j^\circ\right)>\frac{C\tau}{\sqrt n}\right\} \ls\tau^{-q_{m,k}}. $$
\end{theorem}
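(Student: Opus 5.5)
We will derive the statement from Corollary~\ref{cor:master-polynomial}, applied to the random pair $(C,F)$ with $C=\bigcap_{j=1}^mU_j\widehat K_j^\circ$. Here $\widehat K_j$ is the recentered core of $K_j$ from Lemma~\ref{lem:recentered-core}. First we replace each $K_j$ by $\widehat K_j$. Since $K_j^\circ\subseteq\lambda_n\widehat K_j^\circ$, we have
$$ F\cap\bigcap_jU_jK_j^\circ\subseteq\lambda_n\Big(F\cap\bigcap_jU_j\widehat K_j^\circ\Big), $$
and $\lambda_n\ls C$. So it suffices to bound $R(C\cap F)$. Each $\widehat K_j^\circ$ contains $\frac c{\sqrt n}B_2^n$, and this ball is rotation invariant, so $C\supseteq rB_2^n$ with $r=c/\sqrt n$ for every realization. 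This is the deterministic inradius hypothesis of Theorem~\ref{th:master-amplification}.

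Next we bound the averaged tail
$$ \overline\alpha(t)=\E\,\sigma_F\{\theta\in S_F:\rho_C(\theta)\gr t\}. $$
We use independence of $F$ from the rotations together with the incidence identity \eqref{eq:grassmann-incidence}, applied conditionally on the $U_j$. This reduces $\overline\alpha(t)$ to $\E\,\sigma\{\rho_C\gr t\}$. By the pointwise computation in the radial-tail lemma (the case $U_1=I_n$), this equals $\prod_{j=1}^m\alpha_{\widehat K_j^\circ}(t)$. By \eqref{eq:recentered-core-tail}, each factor is at most $\min\{1,(C/(t\sqrt n))^n\}$. Hence
$$ \overline\alpha(t)\ls\min\{1,(a/t)^{mn}\},\qquad a\simeq n^{-1/2}, $$
so we may take $\beta=mn$ and $d=k-1$, which gives $\Delta=mn-k+1=q_{m,k}$.

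Then we check the hypotheses of \eqref{eq:master-polynomial-absolute}. Since $m\gr2$ and $k\ls n$, we get $\beta=mn\gr2n\gr2(k-1)=2d$ and $\Delta\gr n+1\gr1$. The conclusion is
$$ \Pp\{R(C\cap F)>Ca(a/r)^{d/\Delta}\tau\}\ls\tau^{-\Delta}. $$
Because $a\simeq r$ and $d/\Delta\ls1$, the threshold is $C\tau/\sqrt n$. This is the second (equivalent) form of the statement. The first form follows by polarity: $(P_FH_m)^\circ_F=F\cap H_m^\circ$, with $H_m^\circ=\bigcap_jU_jK_j^\circ$ (using $(UK)^\circ=UK^\circ$) and $r(P_FH_m)=R(F\cap H_m^\circ)^{-1}$.

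The main point to verify is the conditional incidence step, and specifically that fixing $U_1=I_n$ does not spoil the product formula. We handle it as in the radial-tail lemma. For fixed $\theta$, the event $\rho_C(\theta)\gr t$ requires $U_j^{*}\theta$ to lie in $\{\rho_{\widehat K_j^\circ}\gr t\}$ for each $j\gr2$; these $U_j^{*}\theta$ are independent and uniformly distributed on the sphere. Integrating $\theta$ over $S_F$ with $F$ independent and Haar, the point $\theta$ itself becomes uniform on $S^{n-1}$, which accounts for the $j=1$ factor. By Fubini, everything reduces to the product of the spherical tails, so no loss occurs beyond absolute constants.
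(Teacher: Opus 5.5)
Your proposal is correct and matches the paper's proof essentially step for step. You replace each $K_j$ by the recentered core $\widehat K_j$ of Lemma~\ref{lem:recentered-core}, use the incidence identity together with the averaging over $F$ (which supplies the uniform first direction when $U_1=I_n$) to bound the averaged cap tail by $\prod_j\alpha_{\widehat K_j^\circ}(t)\ls\min\{1,(C/(t\sqrt n))^{mn}\}$, apply \eqref{eq:master-polynomial-absolute} with $\beta=mn$, $d=k-1$ and $a\simeq r\simeq n^{-1/2}$, and then undo the recentering via $\lambda_n$ and take polars in $F$, exactly as the paper does.
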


\begin{proof}
For every $j$, let $\widehat K_j$ be the body given by Lemma~\ref{lem:recentered-core}, and put
$$ C_j=\widehat K_j^\circ, \qquad \widehat D_m=\bigcap_{j=1}^mU_jC_j. $$
Then
$$ \widehat D_m\supseteq\frac c{\sqrt n}B_2^n $$
and
$$ \alpha_{C_j}(t)\ls\min\left\{1,\left(\frac C{t\sqrt n}\right)^n\right\}. $$
For $t>0$ define
$$ Z_t(U,F)=\sigma_F\{\theta\in S_F:\rho_{\widehat D_m}(\theta)\gr t\}. $$
The incidence identity \eqref{eq:grassmann-incidence}, independence and rotational invariance give
$$ \E_{U,F}Z_t(U,F)=\prod_{j=1}^m\alpha_{C_j}(t) \ls\min\left\{1,\left(\frac C{t\sqrt n}\right)^{mn}\right\}. $$
The same formula is valid with $U_1=I_n$, because the averaging over $F$ makes the first sampled direction uniform on $S^{n-1}$.

Apply Corollary~\ref{cor:master-polynomial} to the pair $(\widehat D_m,F)$ with $ a\simeq r\simeq n^{-1/2}$, $\beta=mn$ and $d=k-1.$
Then $\Delta=q_{m,k}$.
Since $m\gr2$ and $k\ls n$ then $ mn\gr2(k-1), $ so the absolute-constant conclusion \eqref{eq:master-polynomial-absolute} applies.
Therefore
$$ \Pp\left\{R(\widehat D_m\cap F)>\frac{C\tau}{\sqrt n}\right\} \ls\tau^{-q_{m,k}}. $$
By \eqref{eq:recentered-core-polar-comparison},
$$ \bigcap_{j=1}^mU_jK_j^\circ \subseteq\lambda_n\widehat D_m, $$
so the same estimate holds, after changing the absolute constant, for $F\cap\bigcap_jU_jK_j^\circ$.

Finally,
$$ H_m^\circ=\bigcap_{j=1}^mU_jK_j^\circ, \qquad (P_FH_m)^\circ_F=H_m^\circ\cap F. $$
Taking polars in $F$ proves \eqref{eq:mixed-rotation-projection}.
\end{proof}

Taking $k=n$ in Theorem~\ref{th:mixed-rotation-projection} proves Theorem~\ref{th:m-rotation-main}.
In particular, for every fixed $\tau_0>1$ the exceptional probability is at most $\exp[-c_{\tau_0}((m-1)n+1)]$, where $c_{\tau_0}>0$ depends only on $\tau_0$.
Integration of the tail gives the corresponding circumradius moments for every $0<p<mn-k+1$.

The following isotropic cylinder shows that the cap-incidence exponent has the correct dimensional order.
Put
$$ Z_n=[-\sqrt3,\sqrt3]\times\sqrt{n+1}\,B_2^{n-1}, \qquad K_n=|Z_n|^{-1/n}Z_n. $$
Then $K_n$ is isotropic, $|Z_n|^{1/n}\simeq1$, and, writing $\theta=(\theta_1,\theta')$,
\begin{equation}
 \rho_{K_n^\circ}(\theta)
 \simeq\frac1{|\theta_1|+\sqrt n\,|\theta'|}.
\end{equation}
Consequently, there exist absolute constants $A,c,C>0$ such that, for $A\ls\tau\ls c\sqrt n$,
\begin{equation}\label{eq:cylinder-high-radius-caps}
 \left\{\operatorname{dist}(\theta,\{e_1,-e_1\})\ls\frac c\tau\right\} \subseteq\left\{\rho_{K_n^\circ}(\theta)\gr\frac{c\tau}{\sqrt n}\right\} \subseteq\left\{\operatorname{dist}(\theta,\{e_1,-e_1\})\ls\frac C\tau\right\}.
\end{equation}

\begin{proposition}
Let $m\gr2$, let $U_1=I_n,U_2,\ldots,U_m$ be independent Haar rotations, and put
$$ D_{m,n}=\bigcap_{j=1}^mU_jK_n^\circ. $$
There are absolute constants $A,c,c_0,C>0$ such that, for $A\ls\tau\ls c_0\sqrt n$,
$$ \left(\frac c\tau\right)^{(m-1)(n-1)} \ls\Pp\left\{R(D_{m,n})\gr\frac{c\tau}{\sqrt n}\right\} \ls\left(\frac C\tau\right)^{(m-1)(n-1)}. $$
\end{proposition}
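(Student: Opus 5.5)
The plan is to reduce the event $\{R(D_{m,n})\gr c\tau/\sqrt n\}$ to the event that the independent uniform random points $U_je_1\in S^{n-1}$, $j=2,\ldots,m$, all lie in a spherical cap of angular radius $\simeq1/\tau$ around $\pm e_1$. Each such event has probability $\simeq(1/\tau)^{n-1}$, and independence then gives the power $(m-1)(n-1)$.

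\emph{Reduction.} Since $\rho_{U_jK_n^\circ}(\theta)=\rho_{K_n^\circ}(U_j^{-1}\theta)$, we have
$$\rho_{D_{m,n}}(\theta)=\min_{1\ls j\ls m}\rho_{K_n^\circ}(U_j^{-1}\theta),\qquad R(D_{m,n})=\max_{\theta\in S^{n-1}}\rho_{D_{m,n}}(\theta).$$
We also use $\operatorname{dist}(U_j^{-1}\theta,\{\pm e_1\})=\operatorname{dist}(\theta,\{\pm U_je_1\})$. Throughout, we work with the two inclusions in \eqref{eq:cylinder-high-radius-caps}. Their thresholds may carry different constants, so we apply each one at a rescaled parameter $\tau'=\kappa\tau$ with a suitable absolute $\kappa$. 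This is why we restrict to $A\ls\tau\ls c_0\sqrt n$: it keeps $\tau'$ in the admissible range $[A,c\sqrt n]$.

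\emph{Upper bound.} Suppose $R(D_{m,n})\gr c\tau/\sqrt n$, and let $\theta$ be a maximizing direction. Then $\rho_{K_n^\circ}(U_j^{-1}\theta)\gr c\tau/\sqrt n$ for every $j$. By the right inclusion in \eqref{eq:cylinder-high-radius-caps}, $\operatorname{dist}(\theta,\{\pm U_je_1\})\ls C/\tau$ for every $j$. Since $U_1=I_n$, the case $j=1$ says $\theta$ is within $C/\tau$ of $\pm e_1$. The triangle inequality then gives $\operatorname{dist}(U_je_1,\{\pm e_1\})\ls2C/\tau$ for $j=2,\ldots,m$. The vectors $U_je_1$ are independent and uniform on $S^{n-1}$. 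A cap of Euclidean radius $\varepsilon$ has measure at most $(C\varepsilon)^{n-1}$, so each of these events has probability at most $2(C'/\tau)^{n-1}$. The factor $2$ coming from the two antipodal caps is absorbed into the constant, which yields $(C/\tau)^{(m-1)(n-1)}$.

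\emph{Lower bound.} Suppose instead that $\operatorname{dist}(U_je_1,\{\pm e_1\})\ls c/\tau$ for all $j\gr2$; for $j=1$ this holds trivially. Take $\theta=e_1$. Then $\operatorname{dist}(U_j^{-1}e_1,\{\pm e_1\})\ls c/\tau$ for every $j$. The left inclusion in \eqref{eq:cylinder-high-radius-caps} then gives $\rho_{K_n^\circ}(U_j^{-1}e_1)\gr c\tau/\sqrt n$ for all $j$, and hence $R(D_{m,n})\gr\rho_{D_{m,n})}(e_1)\gr c\tau/\sqrt n$. By the cap estimate used in Lemma~\ref{lem:cap-circumradius}, each event has probability at least $\frac12(c/(8\tau))^{n-1}$. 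By independence, the probability of the intersection is at least $(c'/\tau)^{(m-1)(n-1)}$; the factor $\frac12$ is absorbed since $n\gr2$.

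\emph{Main obstacle.} The substance of the argument is the two-sided cap description \eqref{eq:cylinder-high-radius-caps}, which I take as given. What remains delicate is bookkeeping: the same threshold $c\tau/\sqrt n$ has to appear in both directions. I would fix the constant $c$ in the statement as the smaller of the two constants from \eqref{eq:cylinder-high-radius-caps}. I would then choose $A$ large and $c_0$ small so that both rescaled applications remain in the admissible range and the cap radii $C/\tau$ stay below $1$.
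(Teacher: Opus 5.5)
Your proposal is correct and follows essentially the same route as the paper. The lower bound forces each $U_je_1$ ($j\gr2$) into a cap of radius $\simeq1/\tau$ around $\pm e_1$ and applies the left inclusion of \eqref{eq:cylinder-high-radius-caps} at $\theta=e_1$. The upper bound applies the right inclusion at a maximizing direction and finishes with two-sided cap-measure estimates and independence; your explicit triangle-inequality step through $U_1=I_n$ and the constant bookkeeping fill in details the paper leaves implicit.
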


\begin{proof}
For the lower estimate, require $U_j^{-1}e_1$ to lie within $c/\tau$ of $e_1$ for every $j\gr2$ and use the first inclusion in \eqref{eq:cylinder-high-radius-caps}.
For the upper estimate, if $R(D_{m,n})\gr c\tau/\sqrt n$, then some direction lies simultaneously in the corresponding high-radius caps, so $U_je_1$ lies within $C/\tau$ of $\{e_1,-e_1\}$ for every $j\gr2$.
For $0<a\ls c_0$, a spherical cap of Euclidean radius $a$ has measure between $(ca)^{n-1}$ and $(Ca)^{n-1}$; independence completes the proof.
\end{proof}

The endpoint argument gives the exponent $(m-1)n+1$, while the cylinder has exponent $(m-1)(n-1)$; for fixed $m$, the two have the same leading dimensional order.

\section{Small deviations, negative moments and Minkowski averages}\label{section:5}

We begin by recalling the small-deviation information needed below.
For a symmetric convex body $K$ put
$$ d(K)=d_\ast(K^\circ,1/2)=\min\{-\ln\sigma\{\theta:2p_K(\theta)\ls M(K)\},n\}, $$
and
$$ \beta(K)=\frac{\operatorname{Var}p_K(G)}{(\E p_K(G))^2},\qquad \beta_\ast(K)=\beta(K^\circ)=\frac{\operatorname{Var}h_K(G)}{(\E h_K(G))^2}. $$
We shall also use the critical dimension
$$ k(K)=n\left(\frac{M(K)}{b(K)}\right)^2. $$
Paouris and Valettas~\cite[Proposition~4.3]{Paouris-Valettas-variance} proved the general comparison
\begin{equation}\label{eq:beta-critical-dimension}
 \ln\frac{1}{\beta(K)}\ls Ck(K)\ls\frac{C}{\beta(K)}.
\end{equation}
The estimates of Klartag--Vershynin~\cite{Klartag-Vershynin-2007}, Paouris--Valettas~\cite{Paouris-Valettas-small-deviation} and Paouris--Pivovarov--Valettas~\cite{Paouris-Pivovarov-Valettas-Alexandrov} imply, for absolute constants $c,C>0$,
\begin{equation}\label{eq:negative-moment-d}
 M_{-q}(K)\gr cM(K)\qquad(0<q<cd(K)),
\end{equation}
and $d(K)\gr c/\beta(K)$.
More precisely, for $0<q<c/\beta(K)$,
$$ M_{-q}(K)\gr\left[1-C\min\left\{\frac{q}{k(K)},\max\{\sqrt{\beta(K)},q\beta(K)\}\right\}\right]M(K). $$
In particular,
\begin{equation}\label{eq:negative-moment-beta}
 M_{-q}(K)\gr cM(K)\qquad(0<q<c/\beta(K)).
\end{equation}
Equivalently, $w_{-q}(C)\gr cw(C)$ in the ranges $0<q<cd_\ast(C,1/2)$ and $0<q<c/\beta_\ast(C)$.

The use of random Minkowski averages as an empirical regularization goes back to Bourgain, Lindenstrauss and V.~Milman~\cite{BLM}.
Litvak, V.~D.~Milman and Schechtman~\cite{Litvak-Milman-Schechtman-1997,Litvak-Milman-Schechtman-1998} studied $q$-averages of norms and identified families of projective caps as a central geometric mechanism, while concentration and net implementations were developed by Artstein-Avidan, Friedland and V.~D.~Milman~\cite{Artstein-Friedland-Milman-2007}.
In particular, the bounded-distance threshold corresponding to $(b(K)/M(K))^2$ is classical.
The new point below is the quantitative estimate below this threshold: weighted negative moments provide the lower inclusion, while the upper inclusion follows the classical empirical method.

It is nevertheless false that $w_{-cn}(C)$ is always comparable to $w(C)$ for isotropic convex bodies.
The standard cross-polytope gives a simple example.

\begin{example}
Let $C_n=a_nB_1^n$, where
$$ a_n=|B_1^n|^{-1/n}=\left(\frac{n!}{2^n}\right)^{1/n}\simeq n. $$
Then $C_n$ is a symmetric isotropic convex body of volume one and
\begin{equation}\label{eq:cross-polytope-width}
 w(C_n)\simeq\sqrt{n\ln(n+1)}.
\end{equation}
For every fixed $\alpha\in(0,1)$,
\begin{equation}\label{eq:cross-polytope-negative}
 w_{-\alpha n}(C_n)\ls C_\alpha\sqrt n.
\end{equation}
Here $C_\alpha>0$ depends only on $\alpha$.
Consequently, $w_{-\alpha n}(C_n)/w(C_n)\to0$ as $n\to\infty$.
\end{example}

Indeed, if $G=(g_1,\ldots,g_n)$ is standard Gaussian and $\Theta=G/|G|$, then $\Theta$ is uniform on $S^{n-1}$, independent of $|G|$, and
$$ \E|G|\simeq\sqrt n,\qquad \E\|G\|_\infty\simeq\sqrt{\ln(n+1)}. $$
Hence
$$ \E\|\Theta\|_\infty\simeq\sqrt{\frac{\ln(n+1)}n}, $$
and \eqref{eq:cross-polytope-width} follows from $h_{C_n}(\theta)=a_n\|\theta\|_\infty$.
For \eqref{eq:cross-polytope-negative}, use the event $1\ls|g_i|\ls2$ for all $i$.
It has probability $p_0^n$ for an absolute $p_0\in(0,1)$ and implies $\|G\|_\infty/|G|\ls2/\sqrt n$.
Therefore
$$ \sigma\left\{\theta:\|\theta\|_\infty\ls\frac2{\sqrt n}\right\}\gr p_0^n, $$
which gives $w_{-\alpha n}(C_n)\ls C_\alpha\sqrt n$.

\smallskip 

Let $K_1,\ldots,K_m$ contain the origin, let $U_1,\ldots,U_m$ be independent Haar rotations and let $\lambda_j\gr0$, $\sum_j\lambda_j=1$.
Put
$$ A_\lambda=\sum_{j=1}^m\lambda_jU_j^*(K_j^\circ), \qquad b_j=b(K_j), \qquad b_\lambda=\sum_{j=1}^m\lambda_jb_j, $$
and, for $q>0$,
$$ a_q(\lambda)=\prod_{j=1}^mM_{-q\lambda_j}(K_j)^{\lambda_j}, $$
with the usual interpretation when $\lambda_j=0$.

\begin{theorem}\label{th:weighted-negative-profile}
Let the notation be as above.
Assume that $q\gr2n$ and put
$$ \Delta=q-n+1. $$
Then, for every $u\gr0$, with probability at least $1-e^{-u}$,
\begin{equation}\label{eq:weighted-profile-lower}
 A_\lambda\supseteq c\,a_q(\lambda) \left(\frac{c\,a_q(\lambda)}{b_\lambda}\right)^{\frac{n-1}{\Delta}} e^{-u/\Delta}B_2^n.
\end{equation}
Moreover, if
$$ \overline M_\lambda=\sum_{j=1}^m\lambda_jM(K_j), \qquad B_\lambda=\left(\sum_{j=1}^m\lambda_j^2b_j^2\right)^{1/2}, $$
then, with probability at least $1-e^{-cn}$,
\begin{equation}\label{eq:weighted-profile-upper}
 A_\lambda\subseteq C(\overline M_\lambda+B_\lambda)B_2^n.
\end{equation}
\end{theorem}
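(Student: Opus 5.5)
We prove the lower inclusion \eqref{eq:weighted-profile-lower} by applying the cap-incidence method of Theorem~\ref{th:master-amplification} to the support function of $A_\lambda$, and the upper inclusion \eqref{eq:weighted-profile-upper} by a standard net argument.

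\emph{Setup for the lower inclusion.}
Since $h_{A_\lambda}(x)=\sum_j\lambda_jp_{K_j}(U_jx)$, the inradius of $A_\lambda$ is $\min_{S^{n-1}}h_{A_\lambda}$. Consider the star body $C=A_\lambda^\circ$. Its radial function is $\rho_C=h_{A_\lambda}^{-1}$, so small support values of $A_\lambda$ are large radial values of $C$.

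\emph{Key tail estimate.}
For fixed $\theta\in S^{n-1}$ the vectors $U_j\theta$ are independent and uniform on the sphere. By the weighted arithmetic--geometric mean inequality, $h_{A_\lambda}(\theta)\gr\prod_jp_{K_j}(U_j\theta)^{\lambda_j}$. Hence, by independence,
$$\E\,h_{A_\lambda}(\theta)^{-q}\ls\prod_j\E\,p_{K_j}(U_j\theta)^{-q\lambda_j}=\prod_j M_{-q\lambda_j}(K_j)^{-q\lambda_j}=a_q(\lambda)^{-q}.$$
Markov's inequality then gives
$$\E_U\,\sigma\{\theta:\rho_C(\theta)\gr t\}\ls\min\{1,(1/(a_q(\lambda)t))^q\}.$$
So the hypothesis of Corollary~\ref{cor:master-polynomial} holds with $a=1/a_q(\lambda)$, $\beta=q$, and $F=\R^n$ (deterministic, $k=n$, $d=n-1$). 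We need a deterministic inradius for $C$. Since $K_j^\circ\subseteq b_jB_2^n$, we have $A_\lambda\subseteq b_\lambda B_2^n$, hence $C\supseteq b_\lambda^{-1}B_2^n$, i.e.\ $r=1/b_\lambda$.

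\emph{Conclusion of the lower inclusion.}
Since $q\gr2n\gr2d$, we have $\Delta=q-n+1\gr1$, and \eqref{eq:master-polynomial-absolute} gives
$$\Pp\left\{R(C)>\frac{C}{a_q(\lambda)}\left(\frac{b_\lambda}{a_q(\lambda)}\right)^{(n-1)/\Delta}\tau\right\}\ls\tau^{-\Delta}.$$
Set $\tau=e^{u/\Delta}$ and use $r(A_\lambda)=1/R(C)$ to obtain \eqref{eq:weighted-profile-lower}. One point needs checking: Lemma~\ref{lem:cap-circumradius} requires convexity of $C$, which holds because $A_\lambda$ is a convex body containing the origin in its interior, so its polar is convex. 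The cases $\lambda_j=0$ are simply dropped from the product.

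\emph{Upper inclusion.}
We must bound $\max_{\theta}h_{A_\lambda}(\theta)=\max_\theta\sum_j\lambda_jp_{K_j}(U_j\theta)$. For fixed $\theta$ this is a sum of independent variables with mean $\overline M_\lambda$. The $j$-th summand is a $\lambda_jb_j$-Lipschitz function of $U_j$ with respect to the geodesic metric on $O(n)$. Concentration on the product $O(n)^m$ (or equivalently on $(S^{n-1})^m$ for fixed $\theta$) gives
$$\Pp\{h_{A_\lambda}(\theta)>\overline M_\lambda+s\}\ls\exp(-cns^2/B_\lambda^2).$$
Take $s=CB_\lambda$ and a $1/2$-net $N$ of $S^{n-1}$ with $|N|\ls5^n$; a union bound gives $h_{A_\lambda}\ls\overline M_\lambda+CB_\lambda$ on $N$ with probability at least $1-e^{-cn}$. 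The standard net bootstrap for the norm $h_{A_\lambda}$ then gives $\max_{S^{n-1}}h_{A_\lambda}\ls2\max_Nh_{A_\lambda}$, which is \eqref{eq:weighted-profile-upper}.

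\emph{Main obstacle.}
The concentration step is the delicate part. The summands need not be symmetric if the $K_j$ are not, and one must verify that the Lipschitz constant of $(U_1,\ldots,U_m)\mapsto\sum_j\lambda_jp_{K_j}(U_j\theta)$ in the $\ell_2$-sum metric is $(\sum_j\lambda_j^2b_j^2)^{1/2}=B_\lambda$. This holds because $p_{K_j}$ is $b_j$-Lipschitz (the Lipschitz bound for gauges holds without symmetry). Concentration on $O(n)^m$ with constant of order $n$ then applies, using Gromov--Milman for $SO(n)$ products or passing through uniform points on spheres.
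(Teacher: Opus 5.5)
Your proposal is correct and follows the paper's proof essentially step for step. Weighted AM--GM and independence give $\E\,h_{A_\lambda}(\theta)^{-q}\ls a_q(\lambda)^{-q}$; Corollary~\ref{cor:master-polynomial} is applied to the pair $(A_\lambda^\circ,\R^n)$ with $r=b_\lambda^{-1}$, $a=a_q(\lambda)^{-1}$, $\beta=q$, $d=n-1$; and the upper inclusion comes from tensorized spherical concentration plus a $1/2$-net. Your extra remarks usefully fill in details the paper leaves implicit: that $p_{K_j}$ is $b_j$-Lipschitz without symmetry, and that $\max_{S^{n-1}}h_{A_\lambda}\ls2\max_{N}h_{A_\lambda}$ needs only sublinearity of $h_{A_\lambda}$, not that it be a norm.
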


\begin{proof}
For fixed $\theta\in S^{n-1}$ put $ X_j=p_{K_j}(U_j\theta). $
Then
$$ h_{A_\lambda}(\theta)=\sum_{j=1}^m\lambda_jX_j. $$
By weighted arithmetic-geometric mean,
$$ h_{A_\lambda}(\theta)^{-q} \ls\prod_{j=1}^mX_j^{-q\lambda_j}. $$
Independence and rotational invariance give
$$ \E h_{A_\lambda}(\theta)^{-q} \ls\prod_{j=1}^m\left(\int_{S^{n-1}}p_{K_j}^{-q\lambda_j}\,d\sigma\right) =a_q(\lambda)^{-q}. $$
Let $L=A_\lambda^\circ$.
Since $h_{A_\lambda}\ls b_\lambda$, we have $ L\supseteq b_\lambda^{-1}B_2^n. $
Also,
$$ \E\sigma\{\rho_L\gr t\} =\E\sigma\{h_{A_\lambda}\ls t^{-1}\} \ls(ta_q(\lambda))^{-q}. $$
Apply Corollary~\ref{cor:master-polynomial} to the pair $(L,\R^n)$ with $ r=b_\lambda^{-1}$, $a=a_q(\lambda)^{-1}$, $\beta=q$ and $d=n-1. $
Since $q\gr2n>2(n-1)$, the absolute-constant form applies and yields
$$ \Pp\left\{R(L)>\frac C{a_q(\lambda)} \left(\frac{Cb_\lambda}{a_q(\lambda)}\right)^{\frac{n-1}{\Delta}} e^{u/\Delta}\right\} \ls e^{-u}. $$
Taking polars proves \eqref{eq:weighted-profile-lower}.

For the upper inclusion we follow the standard empirical concentration-and-net scheme; see~\cite{BLM,Artstein-Friedland-Milman-2007}.
Each $p_{K_j}$ is $b_j$-Lipschitz on the sphere.
For fixed $\theta$, tensorized spherical concentration gives
$$ \Pp\{|h_{A_\lambda}(\theta)-\overline M_\lambda|>t\} \ls2\exp\left(-\frac{cnt^2}{B_\lambda^2}\right). $$
Take $t=C_0B_\lambda$ and use a $1/2$-net $\mathcal N\subseteq S^{n-1}$ with $|\mathcal N|\ls5^n$.
Outside a set of probability at most $e^{-cn}$,
$$ h_{A_\lambda}(\theta)\ls\overline M_\lambda+C_0B_\lambda \qquad(\theta\in\mathcal N). $$
For every convex body $A$ containing the origin,
$$ R(A)\ls2\max_{\theta\in\mathcal N}h_A(\theta). $$
This proves \eqref{eq:weighted-profile-upper}.
\end{proof}

For a single body and equal weights put
$$ A_m=\frac1m\sum_{j=1}^mU_j^*(K^\circ). $$

\begin{proof}[Proof of Theorem~$\ref{th:minkowski-average-negative-moments}$]
Apply Theorem~\ref{th:weighted-negative-profile} with $K_j=K$, $\lambda_j=1/m$ and the order $q$ from the statement.
Then
$$ a_q(\lambda)=M_{-q/m}(K)\gr c_0M(K), \qquad b_\lambda=b(K), $$
and the lower inclusion is exactly \eqref{eq:minkowski-average-lower-inclusion}, after changing the absolute constant.
Moreover,
$$ \overline M_\lambda=M(K), \qquad B_\lambda=\frac{b(K)}{\sqrt m}, $$
so \eqref{eq:weighted-profile-upper} gives \eqref{eq:minkowski-average-upper-inclusion}.
Combining the two inclusions proves \eqref{eq:minkowski-average-geometric-distance}.
\end{proof}

\begin{corollary}\label{cor:minkowski-small-deviation}
Let $K$, $U_1,\ldots,U_m$ and $A_m$ be as in Theorem~$\ref{th:minkowski-average-negative-moments}$.
If $md(K)\gr Cn$, then, with probability at least $1-e^{-cn}$,
\begin{equation}\label{eq:minkowski-average-d-bound}
 d_{\rm G}(A_m,B_2^n)
 \ls C\left(1+\frac{b(K)}{M(K)\sqrt m}\right)
 \left(\frac{Cb(K)}{M(K)}\right)^{\frac{Cn}{md(K)}}.
\end{equation}
The same conclusion holds with the last factor replaced by
$$ \left(\frac{Cb(K)}{M(K)}\right)^{Cn\beta(K)/m} $$
under the assumption $m/\beta(K)\gr Cn$.
\end{corollary}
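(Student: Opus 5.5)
The plan is to derive the corollary directly from Theorem~\ref{th:minkowski-average-negative-moments}. The choices are $p_0=c\,d(K)$ (or $p_0=c/\beta(K)$ in the second version), a specific $q$ of order $n$, and $u$ of order $n$.

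First, by \eqref{eq:negative-moment-d} we have $M_{-p}(K)\gr c_0M(K)$ for all $0<p\ls p_0:=c\,d(K)$. The hypothesis of Theorem~\ref{th:minkowski-average-negative-moments} therefore holds with this $p_0$ and an absolute $c_0$. The theorem needs an order $q$ with $2n\ls q\ls mp_0=cm\,d(K)$. Take
$$ q=\max\{2n,\,c\,m\,d(K)\}. $$
The assumption $md(K)\gr Cn$ with $C$ large forces $cmd(K)\gr 2n$, so $q=cmd(K)$ is admissible. Then
$$ \Delta=q-n+1\gr q/2\gr c'md(K). $$
We also have $\Delta\gr n+1$ because $q\gr 2n$.

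Next apply \eqref{eq:minkowski-average-geometric-distance} with $u=n$. The failure probability is at most $e^{-n}+e^{-cn}\ls e^{-c''n}$. In the resulting bound, the exponent satisfies $(n-1)/\Delta\ls n/(c'md(K))=Cn/(md(K))$. The term $e^{u/\Delta}=e^{n/\Delta}$ is at most $e$, since $\Delta\gr n$, and it is absorbed into the leading constant $C$. One also needs $Cb/M\gr1$, so that enlarging the exponent only weakens the bound. This holds because $b(K)\gr M(K)$ always, and we may take $C\gr1$. These steps give \eqref{eq:minkowski-average-d-bound}.

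For the $\beta$ version, the argument is identical with \eqref{eq:negative-moment-beta} in place of \eqref{eq:negative-moment-d}. Take $p_0=c/\beta(K)$ and $q=cm/\beta(K)$, which is at least $2n$ under the hypothesis $m/\beta(K)\gr Cn$. Then $\Delta\gr cm/\beta(K)$, so $(n-1)/\Delta\ls Cn\beta(K)/m$.

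There is no real obstacle here; the step needing care is the bookkeeping of constants. The constant in the hypothesis must be large enough that the admissible window for $q$ is nonempty and that $\Delta\simeq q$. This also keeps the factor $e^{u/\Delta}$ with $u\simeq n$ absolutely bounded.
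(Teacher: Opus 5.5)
Your proposal is correct and follows the paper's proof. The paper likewise feeds \eqref{eq:negative-moment-d} (respectively \eqref{eq:negative-moment-beta}) into Theorem~\ref{th:minkowski-average-negative-moments}, takes the order $q$ to be a small multiple of $md(K)$ (respectively $m/\beta(K)$), and chooses $u$ of order $n$. Your bookkeeping ($\Delta\gr\max\{q/2,n+1\}$, $e^{u/\Delta}\ls e$, and $b(K)\gr M(K)$ so that enlarging the exponent is harmless) fills in exactly the details the paper leaves implicit.
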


\begin{proof}
Use \eqref{eq:negative-moment-d}, respectively \eqref{eq:negative-moment-beta}, in Theorem~\ref{th:minkowski-average-negative-moments}, choose the order to be a sufficiently small multiple of $md(K)$, respectively $m/\beta(K)$, and take $u=cn$.
\end{proof}

The same argument gives a projected estimate without changing the negative-moment input.

\begin{theorem}\label{th:projected-weighted-negative-profile}
Let $K_1,\ldots,K_m$ contain the origin, let $U_1,\ldots,U_m$ be independent Haar rotations and let $\lambda_j\gr0$, $\sum_j\lambda_j=1$.
Put
$$ A_\lambda=\sum_{j=1}^m\lambda_jU_j^*(K_j^\circ), \qquad b_\lambda=\sum_{j=1}^m\lambda_jb(K_j), \qquad a_q(\lambda)=\prod_{j=1}^mM_{-q\lambda_j}(K_j)^{\lambda_j}. $$
As before, the last expression has the usual interpretation when $\lambda_j=0$.
Independently, let $F$ be Haar-distributed on $G_{n,k}$, where $1\ls k\ls n$.
Assume that $q\gr2k$ and put $\Delta=q-k+1$.
Then, for every $u\gr0$, with probability at least $1-e^{-u}$,
\begin{equation}\label{eq:projected-weighted-profile-lower}
 P_FA_\lambda\supseteq c\,a_q(\lambda)\left(\frac{c\,a_q(\lambda)}{b_\lambda}\right)^{\frac{k-1}{\Delta}}e^{-u/\Delta}B_F.
\end{equation}
Moreover, if
$$ \overline M_\lambda=\sum_{j=1}^m\lambda_jM(K_j), \qquad B_\lambda=\left(\sum_{j=1}^m\lambda_j^2b(K_j)^2\right)^{1/2}, $$
then, with probability at least $1-e^{-ck}$,
\begin{equation}\label{eq:projected-weighted-profile-upper}
 P_FA_\lambda\subseteq C\left(\overline M_\lambda+\sqrt{\frac kn}\,B_\lambda\right)B_F.
\end{equation}
\end{theorem}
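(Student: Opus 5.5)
The plan is to follow the proof of Theorem~\ref{th:weighted-negative-profile} and replace the full sphere by the random sphere $S_F$. For $\theta\in S^{n-1}$ put $X_j=p_{K_j}(U_j\theta)$, so that $h_{A_\lambda}(\theta)=\sum_j\lambda_jX_j$. For $\theta\in F$ one also has $h_{P_FA_\lambda}(\theta)=h_{A_\lambda}(\theta)$. Let $L=(P_FA_\lambda)^\circ_F=A_\lambda^\circ\cap F$. The weighted arithmetic--geometric mean inequality together with independence gives, for every fixed $\theta$, $\E_U h_{A_\lambda}(\theta)^{-q}\ls a_q(\lambda)^{-q}$. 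Next we average $\sigma_F\{\rho_L\gr t\}=\sigma_F\{\theta\in S_F:h_{A_\lambda}(\theta)\ls t^{-1}\}$ over $F$ and $U$. The incidence identity \eqref{eq:grassmann-incidence} (or simply rotation invariance of the pair $(U,F)$) makes a uniform point of $S_F$ uniform on $S^{n-1}$, and it is independent of $U$. Markov's inequality then yields
$$\overline\alpha(t)=\E\,\sigma_F\{\rho_L\gr t\}\ls\min\{1,(t\,a_q(\lambda))^{-q}\}.$$
Deterministically $h_{A_\lambda}\ls b_\lambda$ on $S^{n-1}$, so $L\supseteq b_\lambda^{-1}B_F$. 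Equivalently, $A_\lambda^\circ\supseteq b_\lambda^{-1}B_2^n$, so Theorem~\ref{th:master-amplification} applies to the pair $(A_\lambda^\circ,F)$ with $r=b_\lambda^{-1}$. We then use Corollary~\ref{cor:master-polynomial} with $a=a_q(\lambda)^{-1}$, $\beta=q$ and $d=k-1$. Since $q\gr2k>2(k-1)$ and $\Delta=q-k+1\gr1$, the absolute form \eqref{eq:master-polynomial-absolute} applies. Taking $\tau=e^{u/\Delta}$ and polarizing in $F$ gives \eqref{eq:projected-weighted-profile-lower}.

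For the upper inclusion \eqref{eq:projected-weighted-profile-upper} we use concentration combined with a net, now taken inside $F$. The needed concentration is over $\theta$ random, not $U$ random; conditioning on $F$, the argument runs as follows. The function $(U_1,\dots,U_m)\mapsto h_{A_\lambda}(\theta)$, for fixed $\theta$, depends only on the independent uniform vectors $U_j\theta\in S^{n-1}$. Each coordinate is $\lambda_jb_j$-Lipschitz, so tensorized spherical concentration gives deviation $\exp(-cns^2/B_\lambda^2)$ around the mean $\overline M_\lambda$. Take a $1/2$-net $\mathcal N$ of $S_F$ with $|\mathcal N|\ls5^k$ and set $s=C_0\sqrt{k/n}\,B_\lambda$, so that the failure probability per point is $e^{-cC_0^2k}$. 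A union bound then leaves probability at least $1-e^{-ck}$. On this event $h_{A_\lambda}\ls\overline M_\lambda+C_0\sqrt{k/n}\,B_\lambda$ on $\mathcal N$. Since $R(P_FA_\lambda)\ls2\max_{\mathcal N}h_{P_FA_\lambda}$, the inclusion follows.

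The main obstacle is the upper bound, where the net must live in a random $F$ while the concentration is over $U$. This is handled by the independence of $F$ and $U$: conditioning on $F$, the net is fixed and the $U_j$ are still Haar, so the per-point estimate is unchanged. A secondary point is the lower bound's averaging step, where one must check that the incidence argument needs no invariance of the $K_j$. This holds because every $U_j$ is Haar, so for fixed $\theta$ the law of $(U_j\theta)_j$ is a product of uniform measures regardless of $F$. If some $U_1=I_n$ is desired, the averaging over $F$ again supplies uniformity, as in Theorem~\ref{th:mixed-rotation-projection}.
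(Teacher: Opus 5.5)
Your proposal is correct and follows essentially the same route as the paper. For the lower inclusion you use the AM--GM negative-moment bound averaged through the incidence identity, then Corollary~\ref{cor:master-polynomial} for $(A_\lambda^\circ,F)$ with $r=b_\lambda^{-1}$, $a=a_q(\lambda)^{-1}$, $\beta=q$, $d=k-1$. For the upper inclusion you condition on $F$, apply concentration at level $C_0\sqrt{k/n}\,B_\lambda$, and take a union bound over a $1/2$-net of $S_F$ of size $5^k$. One small slip: your sentence saying the concentration is ``over $\theta$ random, not $U$ random'' is backwards, but the argument you then write, with $\theta$ fixed and the randomness in the $U_j$, is the right one.
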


\begin{proof}
The calculation in the proof of Theorem~\ref{th:weighted-negative-profile} gives, for every fixed $\theta\in S^{n-1}$,
$$ \E_Uh_{A_\lambda}(\theta)^{-q}\ls a_q(\lambda)^{-q}. $$
Let $L=A_\lambda^\circ$.
Since $h_{A_\lambda}\ls b_\lambda$, we have $L\supseteq b_\lambda^{-1}B_2^n$.
For $t>0$ put
$$ Z_t(U,F)=\sigma_F\{\theta\in S_F:\rho_L(\theta)\gr t\}. $$
Since $F$ is independent of the rotations, the preceding negative-moment estimate and the incidence identity give
$$ \E_{U,F}Z_t(U,F)=\E_F\int_{S_F}\Pp_U\{h_{A_\lambda}(\theta)\ls t^{-1}\}\,d\sigma_F(\theta) \ls(ta_q(\lambda))^{-q}. $$
Apply Corollary~\ref{cor:master-polynomial} to the pair $(L,F)$ with $ r=b_\lambda^{-1}$, $a=a_q(\lambda)^{-1}$, $\beta=q$ and $d=k-1.$
Since $q\gr2k>2(k-1)$, we obtain
$$ \Pp\left\{R(L\cap F)>\frac C{a_q(\lambda)}\left(\frac{Cb_\lambda}{a_q(\lambda)}\right)^{\frac{k-1}{\Delta}}e^{u/\Delta}\right\}\ls e^{-u}. $$
Since $(P_FA_\lambda)^\circ_F=L\cap F$, taking polars in $F$ proves \eqref{eq:projected-weighted-profile-lower}.

For the upper inclusion, condition on $F$ and let $\mathcal N_F$ be a $1/2$-net of $S_F$ with $|\mathcal N_F|\ls5^k$.
The concentration estimate used above, with $t=C_0\sqrt{k/n}\,B_\lambda$, and a union bound give, outside a set of probability at most $e^{-ck}$,
$$ h_{A_\lambda}(\theta)\ls\overline M_\lambda+C_0\sqrt{\frac kn}\,B_\lambda \qquad(\theta\in\mathcal N_F). $$
Since
$$ R(P_FA_\lambda)\ls2\max_{\theta\in\mathcal N_F}h_{A_\lambda}(\theta), $$
this proves \eqref{eq:projected-weighted-profile-upper}.
\end{proof}

For equal bodies and equal weights we obtain the following consequence.

\begin{corollary}\label{cor:projected-minkowski-average-negative-moments}
Let $K\subseteq\R^n$ be a symmetric convex body, let $U_1,\ldots,U_m$ be independent Haar rotations and put
$$ A_m=\frac1m\sum_{j=1}^mU_j^*(K^\circ), \qquad M=M(K), \qquad b=b(K). $$
Independently, let $F$ be Haar-distributed on $G_{n,k}$, where $1\ls k\ls n$.
Assume that, for some $p_0>0$,
$$ M_{-p}(K)\gr c_0M\qquad(0<p\ls p_0). $$
Let $2k\ls q\ls mp_0$ and put $\Delta=q-k+1$.
Then, for every $u\gr0$, with probability at least $1-e^{-u}$,
\begin{equation}\label{eq:projected-minkowski-average-lower}
 P_FA_m\supseteq cM\left(\frac{cM}{b}\right)^{\frac{k-1}{\Delta}}e^{-u/\Delta}B_F.
\end{equation}
Moreover, with probability at least $1-e^{-ck}$,
\begin{equation}\label{eq:projected-minkowski-average-upper}
 P_FA_m\subseteq C\left(M+b\sqrt{\frac{k}{mn}}\right)B_F.
\end{equation}
Consequently, with probability at least $1-e^{-u}-e^{-ck}$,
\begin{equation}\label{eq:projected-minkowski-average-distance}
 d_{\rm G}(P_FA_m,B_F)\ls C\left(1+\frac bM\sqrt{\frac{k}{mn}}\right)\left(\frac{Cb}{M}\right)^{\frac{k-1}{\Delta}}e^{u/\Delta}.
\end{equation}
\end{corollary}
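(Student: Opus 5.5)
This corollary is the equal-body, equal-weight specialization of Theorem~\ref{th:projected-weighted-negative-profile}. We follow the same route that took Theorem~\ref{th:weighted-negative-profile} to Theorem~\ref{th:minkowski-average-negative-moments}. Take $K_j=K$ and $\lambda_j=1/m$ for every $j$, and keep the order $q$ from the statement.

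\emph{Lower inclusion.} With these weights,
$$ a_q(\lambda)=\prod_{j=1}^mM_{-q/m}(K)^{1/m}=M_{-q/m}(K). $$
Since $q\ls mp_0$, we have $0<q/m\ls p_0$, so the hypothesis gives $a_q(\lambda)\gr c_0M$. Also $b_\lambda=b$. The hypothesis $q\gr2k$ is exactly what Theorem~\ref{th:projected-weighted-negative-profile} requires, and it produces the residual exponent $\Delta=q-k+1$. Inserting $a_q(\lambda)\gr c_0M$ into \eqref{eq:projected-weighted-profile-lower} is legitimate because the right-hand side there is increasing in $a_q(\lambda)$: both the factor $a_q(\lambda)$ and the factor $(ca_q(\lambda)/b)^{(k-1)/\Delta}$ increase with it. Absorbing $c_0$ into the absolute constants gives \eqref{eq:projected-minkowski-average-lower}.

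One caveat: if $cc_0M/b$ exceeded $1$, the monotonicity would still hold, but we want the constant in the base to be absolute. Since $M\ls b$ always, $cM/b\ls1$, so no issue arises. The dependence on $c_0$ is absorbed into $c$, in the same way as in the proof of Theorem~\ref{th:minkowski-average-negative-moments}.

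\emph{Upper inclusion.} Here $\overline M_\lambda=M$ and
$$ B_\lambda=\left(\sum_{j=1}^m m^{-2}b^2\right)^{1/2}=\frac b{\sqrt m}, $$
so $\sqrt{k/n}\,B_\lambda=b\sqrt{k/(mn)}$. Then \eqref{eq:projected-weighted-profile-upper} gives \eqref{eq:projected-minkowski-average-upper} with probability at least $1-e^{-ck}$.

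\emph{Distance bound.} On the intersection of the two good events, whose probability is at least $1-e^{-u}-e^{-ck}$ by a union bound, we have
$$ \rho B_F\subseteq P_FA_m\subseteq RB_F, \qquad \rho=cM\left(\frac{cM}{b}\right)^{\frac{k-1}{\Delta}}e^{-u/\Delta}, \quad R=C\left(M+b\sqrt{\frac{k}{mn}}\right). $$
By definition of $d_{\rm G}$, we get $d_{\rm G}(P_FA_m,B_F)\ls R/\rho$. Computing the ratio,
$$ \frac R\rho=\frac Cc\left(1+\frac bM\sqrt{\frac k{mn}}\right)\left(\frac b{cM}\right)^{\frac{k-1}{\Delta}}e^{u/\Delta}, $$
which is \eqref{eq:projected-minkowski-average-distance} after renaming constants.

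No step is a genuine obstacle. The only points needing care are checking that $q/m$ lies in the range $(0,p_0]$ where the negative-moment hypothesis applies, and keeping the constant inside the power $(Cb/M)^{(k-1)/\Delta}$ absolute. The latter holds because $(k-1)/\Delta\ls1$ when $q\gr2k$.
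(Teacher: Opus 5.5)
Your proposal is correct and follows the paper's own argument. Like the paper, you specialize Theorem~\ref{th:projected-weighted-negative-profile} to $K_j=K$, $\lambda_j=1/m$, read off $a_q(\lambda)=M_{-q/m}(K)\gr c_0M$, $b_\lambda=b$, $\overline M_\lambda=M$ and $B_\lambda=b/\sqrt m$, and then combine the two inclusions. Your caveats about $M\ls b$ and $(k-1)/\Delta\ls1$ are harmless but unnecessary, since $c_0$ is absorbed simply by replacing $c$ with $c\min\{1,c_0\}$ both in the prefactor and inside the base.
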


\begin{proof}
Apply Theorem~\ref{th:projected-weighted-negative-profile} with $K_j=K$ and $\lambda_j=1/m$.
Then
$$ a_q(\lambda)=M_{-q/m}(K)\gr c_0M(K), \qquad b_\lambda=b(K), \qquad \overline M_\lambda=M(K), \qquad B_\lambda=\frac{b(K)}{\sqrt m}. $$
The two inclusions and their combination give the assertion.
\end{proof}

\begin{corollary}\label{cor:projected-minkowski-small-deviation}
Let $K$, $U_1,\ldots,U_m$, $F$ and $A_m$ be as in Corollary~\ref{cor:projected-minkowski-average-negative-moments}.
If $md(K)\gr Ck$, then, with probability at least $1-e^{-ck}$,
\begin{equation}\label{eq:projected-minkowski-d-bound}
 d_{\rm G}(P_FA_m,B_F)\ls C\left(1+\frac{b(K)}{M(K)}\sqrt{\frac{k}{mn}}\right)\left(\frac{Cb(K)}{M(K)}\right)^{\frac{Ck}{md(K)}}.
\end{equation}
The same conclusion holds with the last factor replaced by
$$ \left(\frac{Cb(K)}{M(K)}\right)^{Ck\beta(K)/m} $$
under the assumption $m/\beta(K)\gr Ck$.
\end{corollary}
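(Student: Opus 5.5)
This corollary follows the same route as Corollary~\ref{cor:minkowski-small-deviation}, now starting from Corollary~\ref{cor:projected-minkowski-average-negative-moments}. We choose the moment order $q$ proportional to $md(K)$ and take $u$ proportional to $k$.

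\emph{Choice of $p_0$ and $q$.} By \eqref{eq:negative-moment-d}, the hypothesis of Corollary~\ref{cor:projected-minkowski-average-negative-moments} holds with $p_0=c_1d(K)$ and an absolute $c_0$; we take $c_1$ slightly below the constant in \eqref{eq:negative-moment-d} so that the range is closed. Set $q=mp_0=c_1md(K)$. Then $q\gr2k$ is exactly what the assumption $md(K)\gr Ck$ guarantees, once $C\gr2/c_1$. Since $\Delta=q-k+1\gr q/2$, we get
$$\frac{k-1}{\Delta}\ls\frac{2k}{c_1md(K)}=\frac{C'k}{md(K)}.$$

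\emph{Choice of $u$.} Take $u=c_2k$ with a small absolute $c_2$. Then $u/\Delta\ls 2c_2k/q\ls c_2$, because $q\gr2k$, so $e^{u/\Delta}$ is bounded by an absolute constant.

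\emph{Combining the bounds.} Inequality \eqref{eq:projected-minkowski-average-distance} now holds with probability at least $1-e^{-c_2k}-e^{-ck}\gr1-2e^{-c'k}$. This has the required form $1-e^{-c''k}$ whenever $k$ exceeds an absolute constant. For bounded $k$, the statement is read with the usual convention that the constants are adjusted. Since $b(K)\gr M(K)$, the base satisfies $Cb/M\gr1$. Hence replacing the exponent $(k-1)/\Delta$ by the larger quantity $C'k/(md(K))$ only increases the bound, and this gives \eqref{eq:projected-minkowski-d-bound}.

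\emph{The $\beta$ version.} The argument is identical, using \eqref{eq:negative-moment-beta} with $p_0=c/\beta(K)$ and $q=cm/\beta(K)$. The condition $q\gr2k$ becomes $m/\beta(K)\gr Ck$, and the exponent becomes $Ck\beta(K)/m$.

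The only point needing care is bookkeeping rather than a real obstacle. The constant $C$ in the hypothesis must be large enough that $q\gr2k$ and $\Delta\simeq q$, and $u$ must be tied to $k$, not $n$, so that the failure probability matches $e^{-ck}$ while $e^{u/\Delta}$ stays bounded.
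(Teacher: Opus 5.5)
Your proposal is correct and follows the paper's argument: the paper also inserts \eqref{eq:negative-moment-d} (respectively \eqref{eq:negative-moment-beta}) into Corollary~\ref{cor:projected-minkowski-average-negative-moments}, takes $q$ a small multiple of $md(K)$ (respectively $m/\beta(K)$), and sets $u=ck$. Your extra bookkeeping makes explicit what the paper leaves implicit: $\Delta\gr q/2$, the boundedness of $e^{u/\Delta}$, and the use of $b(K)\gr M(K)$ to enlarge the exponent.
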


\begin{proof}
Use \eqref{eq:negative-moment-d}, respectively \eqref{eq:negative-moment-beta}, in Corollary~\ref{cor:projected-minkowski-average-negative-moments}, choose $q$ to be a sufficiently small multiple of $md(K)$, respectively $m/\beta(K)$, and take $u=ck$.
\end{proof}

The projected theorem also accepts inputs from $L_p$-centroid bodies.
Let $T\subseteq\R^n$ be isotropic and let $Z_p(T)$ denote its $L_p$-centroid body.
Letwin and Mikulincer~\cite[Proposition~2.4]{Letwin-Mikulincer-2026} proved that $w_{-2p}(Z_p(T))\simeq\sqrt p$ for $1\ls p\ls2c_0n$.
Moreover, $w(Z_p(T))\ls C\sqrt{p\ln(ep)}$ by~\cite[Lemma~3.4]{GPT-subgaussian-2026}.
Since $p_{Z_p(T)^\circ}=h_{Z_p(T)}$ and $R(Z_p(T))\ls Cp$, put $A_{m,p}=m^{-1}\sum_jU_j^*Z_p(T)$.
The two parts of Theorem~\ref{th:projected-weighted-negative-profile}, applied with $K_j=Z_p(T)^\circ$, equal weights and $q=2mp$, show, with $U_j$ and $F$ as above, that if $mp\gr k$, then, for every $u\gr0$,
\begin{align*}
 P_FA_{m,p}&\supseteq c\sqrt p\,p^{-\frac{k-1}{2(2mp-k+1)}}e^{-\frac{u}{2mp-k+1}}B_F,\\
 P_FA_{m,p}&\subseteq C\left(\sqrt{p\ln(ep)}+p\sqrt{\frac{k}{mn}}\right)B_F,
\end{align*}
with probability at least $1-e^{-u}$ and $1-e^{-ck}$, respectively.

For the cube, $M_{-p}(B_\infty^n)$ can be estimated for every $p>0$.
Chasapis, K\"onig and Tkocz~\cite{Chasapis-Konig-Tkocz-2021} obtained sharp negative-moment estimates for one-dimensional sums associated with cube sections.
The proposition below concerns instead the spherical $\ell_\infty$ gauge.

\begin{proposition}\label{prop:cube-negative-moment-profile}
There exist absolute constants $c,C>0$ such that, for every $n\gr2$ and every $p>0$,
\begin{equation}\label{eq:cube-negative-moment-profile}
 c\sqrt{\frac{\ln\left(e+\frac{n}{1+p}\right)}n} \ls M_{-p}(B_\infty^n) \ls C\sqrt{\frac{\ln\left(e+\frac{n}{1+p}\right)}n}.
\end{equation}
\end{proposition}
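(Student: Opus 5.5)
We pass to Gaussian variables. Write $\Theta=G/|G|$, so that $p_{B_\infty^n}(\Theta)=\|G\|_\infty/|G|$ with $\Theta$ independent of $|G|$. Then
$$ \E\|G\|_\infty^{-p}=\E|G|^{-p}\cdot M_{-p}(B_\infty^n)^{-p}, $$
which gives
$$ M_{-p}(B_\infty^n)=\frac{\left(\E\|G\|_\infty^{-p}\right)^{-1/p}}{\left(\E|G|^{-p}\right)^{-1/p}}. $$
This identity holds for $p<n$. For $p\gr n$ we use a direct argument, described at the end.

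\emph{The denominator.} We have $\E|G|^{-p}=2^{-p/2}\Gamma((n-p)/2)/\Gamma(n/2)$. For $p\ls n/2$ this gives $(\E|G|^{-p})^{-1/p}\simeq\sqrt n$, and this is the only range where the identity will be used.

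\emph{The numerator: small-ball estimate for $\|G\|_\infty$.} The key input is a two-sided small-ball bound. For $0<s\ls 1$,
$$ \Pp\{\|G\|_\infty\ls s\}=\gamma_1([-s,s])^n\simeq(cs)^n, $$
and for $s\gr1$ it equals $(1-2\bar\Phi(s))^n$, where $\bar\Phi(s)\simeq e^{-s^2/2}/s$. Consequently:
\begin{itemize}
\item if $s=\sqrt{2\ln(n/(1+p))}$ (up to constants), this probability is about $e^{-c(1+p)}$;
\item if $s\ls c\sqrt{\ln(e+n/(1+p))}$, the probability is at most $e^{-C(1+p)}$ with a large constant $C$.
\end{itemize}
Set $s_p=\sqrt{\ln(e+n/(1+p))}$. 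We then estimate
$$ \E\|G\|_\infty^{-p}=p\int_0^\infty s^{-p-1}\Pp\{\|G\|_\infty\ls s\}\,ds. $$
\begin{itemize}
\item \emph{Lower bound on the moment.} Restrict to the event $\{\|G\|_\infty\ls c s_p\}$, which has probability at least $e^{-C(1+p)}$. This gives $(\E\|G\|_\infty^{-p})^{1/p}\gr c/s_p$, hence the upper bound on $M_{-p}$.
\item \emph{Upper bound on the moment.} Split the integral at $s=c s_p$. Above $c s_p$ the integral is at most $(cs_p)^{-p}$. Below it, use $\Pp\{\|G\|_\infty\ls s\}\ls\exp(-n\,\bar\Phi(s))$ for $s\gr1$ together with $(Cs)^n$ for $s\ls1$. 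Since $p<n/2$, the integrand $s^{-p-1}(Cs)^n$ is integrable at $0$. The choice of $s_p$ makes the lower piece at most a constant times $(cs_p)^{-p}$, because the small-ball probability decays faster than $s^{p}$ grows in this range. This gives $(\E\|G\|_\infty^{-p})^{-1/p}\gr cs_p$, hence the lower bound on $M_{-p}$.
\end{itemize}

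\emph{Main obstacle.} The hardest step is controlling the region $1\ls s\ls cs_p$ uniformly in $p$. There we need $\exp(-n\bar\Phi(s))\ls (s/(cs_p))^{p}e^{-(1+p)}$. When $p$ is comparable to $n$, $s_p\simeq1$ and the Gaussian regime $s\ls1$ dominates, which must be handled separately using $(Cs)^n$. I would treat the two ranges $p\ls\sqrt n$ and $p\gr\sqrt n$ separately, checking monotonicity of $\ln\Pp\{\|G\|_\infty\ls s\}$ in $\ln s$.

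\emph{The range $p\gr n/2$.} Here the target is $M_{-p}\simeq n^{-1/2}$. The upper bound follows from monotonicity in $p$ together with the case $p=n/2$. For the lower bound, the cube $B_\infty^n\subseteq\sqrt nB_2^n$ gives $\|\theta\|_\infty\gr n^{-1/2}$ on the sphere, so $M_{-p}\gr n^{-1/2}$ for every $p$. This bound is deterministic, and it covers all large $p$, including those where the Gaussian identity fails.
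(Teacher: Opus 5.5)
Your route is the paper's: the factorization $M_{-p}(B_\infty^n)=(\E|G|^{-p})^{1/p}(\E\|G\|_\infty^{-p})^{-1/p}$ for $p\ls n/2$, the chi-square negative moment, a small-ball event at probability level about $e^{-p}$ for one inequality, a split of $\E\|G\|_\infty^{-p}$ for the other, and monotonicity plus $\min_{S^{n-1}}\|\theta\|_\infty=n^{-1/2}$ for $p>n/2$. The paper's exponential variable $S=-n\ln H(Z)$ is only a reparametrization of your layer-cake integral.

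There is a genuine gap in your upper bound on $M_{-p}$ for small $p$. Restricting to an event $E$ gives only $(\E\|G\|_\infty^{-p})^{1/p}\gr\Pp(E)^{1/p}/(Cs_p)$. With $\Pp(E)\gr e^{-C(1+p)}$, the factor $e^{-C(1+p)/p}$ tends to $0$ as $p\to0$, so the bound is not uniform in $0<p<1$. A better event does not help: you would need $\Pp(E)\gr e^{-Cp}$, and this pushes the level up to about $\sqrt{\ln(n/p)}$. The paper handles $0<p<1$ separately by monotonicity of power means, $(\E Z^{-p})^{-1/p}\ls\E Z\simeq\sqrt{\ln(en)}$. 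Alternatively, use directly $M_{-p}(B_\infty^n)\ls M(B_\infty^n)\simeq\sqrt{\ln(en)/n}$, together with $\ln(e+n/(1+p))\simeq\ln(en)$ in this range. You need one of these to cover every $p>0$.

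Two smaller points. First, the event must be $\{\|G\|_\infty\ls Cs_p\}$ with $C$ at least about $\sqrt2$, not $cs_p$ with $c$ small. Writing $x=n/(1+p)$, one has $n\bar\Phi(\kappa s_p)\approx(1+p)\,x\,(e+x)^{-\kappa^2/2}/(\kappa s_p)$. For $\kappa<\sqrt2$ and large $x$ this is much larger than $1+p$, so $\Pp\{\|G\|_\infty\ls\kappa s_p\}$ is far below $e^{-C(1+p)}$. Your first bullet already has the correct threshold $\sqrt{2\ln(n/(1+p))}$.

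Second, the step you single out as the main obstacle is routine. For $\kappa\ls1/\sqrt2$ and $s\in[1,\kappa s_p]$, use the single bound $\Pp\{\|G\|_\infty\ls s\}\ls\exp(-2n\bar\Phi(\kappa s_p))\ls\exp(-c(1+p)\sqrt x)$, together with $p\int_1^\infty s^{-p-1}\,ds=1$. This range is nonempty only when $x$ is large, and then $c(1+p)\sqrt x$ dominates $p\ln s_p$. No pointwise comparison and no case split at $p\simeq\sqrt n$ is needed.
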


The proof is given in Appendix~\ref{appendix:special-calculations}.

The preceding proposition makes the projected estimate explicit for the cube.

\begin{corollary}\label{cor:projected-cube-profile}
Let $n,m\gr2$ and $1\ls k\ls n$.
Let $U_1,\ldots,U_m$ be independent Haar-distributed orthogonal transformations, let $F$ be an independent Haar-distributed element of $G_{n,k}$, and put
$$ A_m=\frac1m\sum_{j=1}^mU_j^*(Q_n^\circ), \qquad Q_n=[-1/2,1/2]^n. $$
For $p\gr2k/m$ set
$$ \ell_p=\ln\left(e+\frac{n}{1+p}\right), \qquad \Delta_p=mp-k+1. $$
Then, with probability at least $1-Ce^{-ck}$,
\begin{equation}\label{eq:projected-cube-profile-general}
 d_{\rm G}(P_FA_m,B_F)\ls C\left(\sqrt{\frac{\ln(en)}{\ell_p}}+\sqrt{\frac{k}{m\ell_p}}\right)\left(C\sqrt{\frac n{\ell_p}}\right)^{\frac{k-1}{\Delta_p}}\exp\left(\frac{Ck}{\Delta_p}\right).
\end{equation}
In particular, if
\begin{equation}\label{eq:projected-cube-logarithmic-scale}
 L_{m,k}=\ln\left(e+\min\left\{n,\frac{mn}{k\ln(en)}\right\}\right),
\end{equation}
then
\begin{equation}\label{eq:projected-cube-profile-optimized}
 d_{\rm G}(P_FA_m,B_F)\ls C\left(\sqrt{\frac{\ln(en)}{L_{m,k}}}+\sqrt{\frac{k}{mL_{m,k}}}\right)
\end{equation}
with probability at least $1-Ce^{-ck}$.
\end{corollary}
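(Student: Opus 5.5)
Apply Corollary~\ref{cor:projected-minkowski-average-negative-moments} with $K=Q_n$. First record the parameters. Since $p_{Q_n}=2\|\cdot\|_\infty$, Proposition~\ref{prop:cube-negative-moment-profile} gives $M_{-r}(Q_n)\simeq\sqrt{\ln(e+n/(1+r))/n}$ for every $r>0$. In particular $M=M(Q_n)\simeq\sqrt{\ln(en)/n}$, using $r\to0$ and monotonicity, and $b=b(Q_n)=2\sqrt n$. The corollary requires a uniform constant $c_0$ with $M_{-r}\gr c_0M$ on $(0,p_0]$, which fails for $p_0$ of order $n$. I would therefore not use $M$ itself as the reference value. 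Instead, apply Theorem~\ref{th:projected-weighted-negative-profile} directly with $K_j=Q_n$, $\lambda_j=1/m$ and $q=mp$ (admissible because $p\gr2k/m$ gives $q\gr2k$). Then $a_q(\lambda)=M_{-p}(Q_n)\simeq\sqrt{\ell_p/n}$, $b_\lambda=2\sqrt n$ and $\Delta=\Delta_p$.

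Next, combine the two inclusions of Theorem~\ref{th:projected-weighted-negative-profile}. Take $u=ck$ in the lower inclusion, so it holds with probability $1-e^{-ck}$. It gives
$P_FA_m\supseteq c\sqrt{\ell_p/n}\,(c\sqrt{\ell_p}/n)^{(k-1)/\Delta_p}e^{-ck/\Delta_p}B_F$.
The upper inclusion gives, with probability $1-e^{-ck}$,
$P_FA_m\subseteq C(\sqrt{\ln(en)/n}+\sqrt{k/n}\,\cdot 2\sqrt n/\sqrt m\,\cdot n^{-1/2})B_F$,
that is, $C\sqrt{1/n}\,(\sqrt{\ln(en)}+\sqrt{k/m})B_F$. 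The ratio of the outer to the inner radius is
$C(\sqrt{\ln(en)/\ell_p}+\sqrt{k/(m\ell_p)})\,(Cn/\sqrt{\ell_p})^{(k-1)/\Delta_p}e^{Ck/\Delta_p}$.
This is almost \eqref{eq:projected-cube-profile-general}, except that the base is $n/\sqrt{\ell_p}$ rather than $\sqrt{n/\ell_p}$. I would reconcile this by writing $(n/\sqrt{\ell_p})^{(k-1)/\Delta_p}=(\sqrt{n/\ell_p})^{(k-1)/\Delta_p}\,n^{(k-1)/(2\Delta_p)}$. The leftover factor $n^{(k-1)/(2\Delta_p)}$ is absorbed either by enlarging the constant inside the base, since $n\ls(C\sqrt{n/\ell_p})^2$ up to constants once $\ell_p\ls n$, or into the base by squaring, giving the form $(C\sqrt{n/\ell_p})^{C(k-1)/\Delta_p}$. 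The constant $C$ in the exponent is harmless because $\exp(Ck/\Delta_p)$ is already present. I expect this bookkeeping of the base to be the fiddliest point. Adding the two exceptional probabilities gives $1-Ce^{-ck}$.

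For \eqref{eq:projected-cube-profile-optimized}, choose $p$ so that $\Delta_p\gr Ck\ln(en)$ while $\ell_p\gtrsim L_{m,k}$. Then the factor $(C\sqrt{n/\ell_p})^{(k-1)/\Delta_p}e^{Ck/\Delta_p}$ is $\exp(O(k\ln(en)/\Delta_p))=O(1)$, and the prefactor becomes the claimed one. The natural choice is $p=C_1k\ln(en)/m$, with $p\gr 2k/m$ and $\Delta_p\gr C_1k\ln(en)/2$. This gives $n/(1+p)\gtrsim\min\{n,mn/(k\ln(en))\}$, hence $\ell_p\gtrsim L_{m,k}$, using $\ln(e+x/C)\gr c\ln(e+x)$. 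The one case needing care is $p<1$, where $1+p\simeq1$ and $\ell_p\simeq\ln(en)\simeq L_{m,k}$. The main obstacle is confirming that the exponent bound is uniform across the cases $p\ls1$ and $p>1$; both follow from $\Delta_p\gtrsim k\ln(en)$.
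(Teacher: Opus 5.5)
Your route is the paper's: apply Theorem~\ref{th:projected-weighted-negative-profile} directly with $K_j=Q_n$, $\lambda_j=1/m$, $q=mp$, then choose $p\simeq k\ln(en)/m$. There is, however, a concrete error that breaks the first part as you wrote it. The value $b(Q_n)=\max_{S^{n-1}}2\|\theta\|_\infty$ equals $2$ (equivalently $1/r(Q_n)=2$), not $2\sqrt n$.

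With your value, the lower inclusion produces the base $n/\sqrt{\ell_p}$, and your reconciliation does not work. The leftover factor $n^{(k-1)/(2\Delta_p)}=\exp\bigl((k-1)\ln n/(2\Delta_p)\bigr)$ is not $O(e^{Ck/\Delta_p})$, because $\ln n$ is unbounded. For instance, when $mp=2k$ and $k\gr3$, one has $\Delta_p=k+1$ and this factor is at least $n^{1/4}$. The inequality $n\ls C(\sqrt{n/\ell_p})^2=Cn/\ell_p$ would require $\ell_p\ls C$, whereas $\ell_p\simeq\ln(en)$ for $p\ls1$. Replacing the exponent $(k-1)/\Delta_p$ by $C(k-1)/\Delta_p$ gives a strictly weaker statement than \eqref{eq:projected-cube-profile-general}. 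The discrepancy $(C\sqrt{n/\ell_p})^{(C-1)(k-1)/\Delta_p}$ is again not controlled by $e^{Ck/\Delta_p}$, for the same reason.

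The same error makes your upper inclusion internally inconsistent. With your $B_\lambda=2\sqrt n/\sqrt m$, the theorem gives $\sqrt{k/n}\,B_\lambda=2\sqrt{k/m}$, and the extra factor $n^{-1/2}$ you append has no source. Without that factor, the second term of the prefactor would be $\sqrt{nk/(m\ell_p)}$, which would also spoil the optimized bound.

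Once you use $b_\lambda=2$ and $B_\lambda=2/\sqrt m$, both problems disappear, exactly as in the paper's proof:
\begin{itemize}
\item the inner radius is $c\sqrt{\ell_p/n}\,(c\sqrt{\ell_p/n})^{(k-1)/\Delta_p}e^{-ck/\Delta_p}$;
\item the outer radius is $Cn^{-1/2}(\sqrt{\ln(en)}+\sqrt{k/m})$;
\item their ratio is precisely \eqref{eq:projected-cube-profile-general}, with no bookkeeping of the base needed.
\end{itemize}
Your optimization step then goes through as you wrote it: $p=C_1k\ln(en)/m$, $\Delta_p\gtrsim k\ln(en)$ and $\ell_p\simeq L_{m,k}$.
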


The proof is given in Appendix~\ref{appendix:special-calculations}.

\section{Isomorphic global Dvoretzky theorem for isotropic convex bodies}\label{section:6}

We use two standard estimates.
Let $\overline B_2^n=\omega_n^{-1/n}B_2^n$ be the Euclidean ball of volume one.
The reverse Brunn--Minkowski estimate for isotropic convex bodies gives
\begin{equation}\label{eq:reverse-BM-isotropic}
 |K+s\overline B_2^n|^{1/n}\ls C(1+s),\qquad s\gr0,
\end{equation}
for every isotropic $K$; see \cite[Theorem~3.9]{GPT-survey-2025}.
We shall also use the theorem of Bourgain, Lindenstrauss and V.~Milman~\cite{BLM} in the Chernoff form of Artstein-Avidan, Friedland and V.~D.~Milman~\cite{Artstein-Friedland-Milman-2007}: if $K$ is symmetric, $M=M(K)$, $b=b(K)$, $0<\varepsilon<1/2$ and
$$ k\gr\frac{c_1}{\varepsilon^2}\left(\frac bM\right)^2, $$
then independent Haar rotations $U_1,\ldots,U_k$ satisfy
\begin{equation}\label{eq:BLM-norm-comparison}
 \frac{M}{1+\varepsilon}|x|\ls\frac1k\sum_{i=1}^k\|U_ix\|_K\ls(1+\varepsilon)M|x|\qquad(x\in\R^n)
\end{equation}
with probability at least
$$ 1-\exp\left(-c_2\varepsilon^2nk\left(\frac Mb\right)^2\right). $$

The regularization by adjoining a Euclidean ball follows Fresen~\cite{Fresen-2015} and Chasapis--Giannopoulos~\cite{Chasapis-Giannopoulos-2016}.
We compare $K$ with a body obtained by adjoining a Euclidean ball.
In the isotropic setting the inradius determines the value of the parameter $t$ used in the comparison below.

\begin{proof}[Proof of Theorem~$\ref{th:global-dvoretzky}$]
Let $r=r(K)$, so $rB_2^n\subseteq K$.
For $t\gr1$ set
$$ K_t=\conv(K\cup trB_2^n),\qquad M_t=\int_{S^{n-1}}\|x\|_{K_t}\,d\sigma(x),\qquad b_t=\max_{S^{n-1}}\|x\|_{K_t}. $$
Since $rB_2^n\subseteq K$,
\begin{equation}\label{eq:isotropic-comparison}
 K\subseteq K_t\subseteq tK, \qquad \frac1t\|x\|_K\ls\|x\|_{K_t}\ls\|x\|_K.
\end{equation}
Also, $h_{K_t}=\max\{h_K,tr\}$ and $\min_{S^{n-1}}h_K=r$, so $r(K_t)=tr$ and
$$ b_t=\frac1{tr}. $$
By \eqref{eq:polar-integration} and H\"older's inequality,
$$ M_t\gr\left(\int_{S^{n-1}}\|x\|_{K_t}^{-n}\,d\sigma(x)\right)^{-1/n} =\frac{\omega_n^{1/n}}{|K_t|^{1/n}}\gr\frac c{\sqrt n\,|K_t|^{1/n}}. $$
Moreover,
$$ K_t\subseteq K+trB_2^n\subseteq K+C\frac{tr}{\sqrt n}\overline B_2^n, $$
and \eqref{eq:reverse-BM-isotropic} gives
$$ |K_t|^{1/n}\ls C\max\left\{1,\frac{tr}{\sqrt n}\right\}. $$
Consequently,
\begin{equation}\label{eq:isotropic-ratio}
 \left(\frac{b_t}{M_t}\right)^2\ls C\max\left\{\frac n{t^2r^2},1\right\}.
\end{equation}

Fix $\varepsilon=1/4$ and set
$$ t_k=\max\left\{1,\frac A r\sqrt{\frac nk}\right\}, $$
where $A$ is a sufficiently large absolute constant.
Then $n/(t_k^2r^2)\ls k/A^2$, and \eqref{eq:isotropic-ratio} shows, after choosing $A$ and then an absolute $k_0$, that the hypothesis of the Bourgain--Lindenstrauss--Milman theorem is satisfied for every $k\gr k_0$.
Thus \eqref{eq:BLM-norm-comparison} applied to $K_{t_k}$ gives, with probability at least $1-e^{-c'n}$,
$$ \frac45M_{t_k}|x|\ls\frac1k\sum_{i=1}^k\|U_i x\|_{K_{t_k}}\ls\frac54M_{t_k}|x|. $$
Using \eqref{eq:isotropic-comparison},
\begin{equation}\label{eq:isotropic-transfer}
 \frac45M_{t_k}|x|\ls\frac1k\sum_{i=1}^k\|U_i x\|_K\ls\frac{5t_k}{4}M_{t_k}|x|.
\end{equation}
Finally,
$$ h_{\frac1k\sum_{i=1}^kU_i^*(K^\circ)}(x)=\frac1k\sum_{i=1}^k\|U_i x\|_K. $$
It follows from \eqref{eq:isotropic-transfer} that
$$ d_{\rm G}\left(\frac1k\sum_{i=1}^kU_i^*(K^\circ),B_2^n\right)\ls2t_k\ls C\max\left\{1,\frac1{r(K)}\sqrt{\frac nk}\right\}, $$
which proves \eqref{eq:isotropic-global}.
\end{proof}

Let $K\subseteq\R^n$ be a symmetric isotropic convex body, let $m\gr k_0$, and let $U_1,\ldots,U_m$ be independent Haar rotations.
Put
$$ A_m=\frac1m\sum_{j=1}^mU_j^*(K^\circ). $$
Combining Theorem~\ref{th:global-dvoretzky} with the estimate of Section~\ref{section:5}, we obtain, for every $p\gr2n/m$, outside an event of probability at most $2e^{-cn}$,
$$ d_{\rm G}(A_m,B_2^n) \ls C\min\left\{ \max\left\{1,\frac1{r(K)}\sqrt{\frac nm}\right\}, \frac{M(K)+1/(r(K)\sqrt m)} {M_{-p}(K)\left(c r(K)M_{-p}(K)\right)^{\frac{n-1}{mp-n+1}}e^{-cn/(mp-n+1)}} \right\}. $$

For the cube this gives an improvement over the inradius-dependent estimate.
Put
$$ Q_n=[-1/2,1/2]^n, \qquad L_m=\ln\left(e+\min\left\{n,\frac{m}{\ln(en)}\right\}\right). $$

\begin{corollary}\label{cor:cube-global-regularization}
Let $m\gr2$, let $U_1,\ldots,U_m$ be independent Haar-distributed orthogonal transformations and put
$$ A_m=\frac1m\sum_{j=1}^mU_j^*(Q_n^\circ). $$
Then, with probability at least $1-e^{-cn}$,
\begin{equation}\label{eq:cube-profile-distance}
 d_{\rm G}(A_m,B_2^n)
 \ls C\left( \sqrt{\frac{\ln(en)}{L_m}} +\sqrt{\frac n{mL_m}} \right).
\end{equation}
In particular, if $m\gr Cn/\ln(en)$, then
$$ d_{\rm G}(A_m,B_2^n)\ls C. $$
\end{corollary}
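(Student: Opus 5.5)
The plan is to apply Theorem~\ref{th:minkowski-average-negative-moments} (equivalently Corollary~\ref{cor:projected-cube-profile} with $k=n$, $F=\R^n$) to the cube with volume one. We take $K=Q_n$. Then $p_{Q_n}(x)=2\|x\|_\infty$, so $b=b(Q_n)=2$. Proposition~\ref{prop:cube-negative-moment-profile} gives
$$ M=M(Q_n)\simeq\sqrt{\ln(en)/n}, \qquad M_{-p}(Q_n)\simeq\sqrt{\ell_p/n}, \qquad \ell_p=\ln\bigl(e+n/(1+p)\bigr). $$
So for a fixed $p$ we may use the hypothesis of Theorem~\ref{th:weighted-negative-profile} directly, with $a_q(\lambda)=M_{-p}(Q_n)$ and $q=mp$. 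We do not need to go through $c_0M$. The point is that $M_{-p}$ may be much smaller than $M$ when $p\ll n$, and we keep it exactly.

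For $q=mp\gr2n$, $\Delta=mp-n+1$ and $u=cn$, the lower inclusion reads
$$ A_m\supseteq c\sqrt{\ell_p/n}\,\bigl(c\sqrt{\ell_p/n}\bigr)^{\frac{n-1}{\Delta}}e^{-cn/\Delta}B_2^n. $$
The upper inclusion gives
$$ A_m\subseteq C\Bigl(\sqrt{\ln(en)/n}+1/\sqrt m\Bigr)B_2^n $$
with probability $1-e^{-cn}$. Multiplying, we get
$$ d_{\rm G}\ls C\Bigl(\sqrt{\ln(en)/\ell_p}+\sqrt{n/(m\ell_p)}\Bigr)\bigl(C\sqrt{n/\ell_p}\bigr)^{\frac{n-1}{\Delta}}e^{Cn/\Delta}. $$
This is the $k=n$ case of \eqref{eq:projected-cube-profile-general}.

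The key step is choosing $p$ so that the residual factor is $O(1)$. This needs $\Delta\gtrsim n\ln n$, i.e. $mp\gr Cn\ln(en)$, while $\ell_p$ stays comparable to $L_m$.

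\emph{Case $m\gr Cn\ln(en)$.} We take $p=Cn\ln(en)/m\ls1$. Then $\ell_p\simeq\ln(en)$, and by definition $L_m\ls\ln(en)\simeq\ell_p$. The bound becomes $C(1+\sqrt{n/(m\ln(en))})\ls C$, which is consistent with the claim.

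\emph{Case $m< Cn\ln(en)$.} We take $p=C n\ln(en)/m\gr1$. Then $n/(1+p)\simeq m/\ln(en)$, so $\ell_p\simeq\ln(e+m/\ln(en))\simeq L_m$; here $\min\{n,m/\ln(en)\}=m/\ln(en)$ up to constants when $m\ls n\ln(en)$. The condition $mp\gr2n$ holds, and $\Delta\simeq n\ln(en)$. Hence $(C\sqrt n)^{(n-1)/\Delta}\ls C$ and $e^{Cn/\Delta}\ls C$, which yields \eqref{eq:cube-profile-distance}.

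For the final assertion: if $m\gr Cn/\ln(en)$, then $L_m\gr c\ln(en)$, because $\ln(e+n/\ln^2(en))\simeq\ln(en)$. Also $n/(mL_m)\ls C$. So both terms are bounded.

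The main obstacle is the bookkeeping that $\ell_p\simeq L_m$ across the transition between the two cases, including the $\min\{n,\cdot\}$ truncation and small $n$. This is where the constants and the exact form of $L_m$ must be matched carefully against Proposition~\ref{prop:cube-negative-moment-profile}. Alternatively, one can simply cite Corollary~\ref{cor:projected-cube-profile} with $k=n$. There $L_{m,n}=\ln(e+\min\{n,m/\ln(en)\})=L_m$, and the probability $1-Ce^{-cn}$ is adjusted to $1-e^{-cn}$ by shrinking $c$ for large $n$ and using the trivial bound for bounded $n$.
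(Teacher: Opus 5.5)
Your proposal is correct and is essentially the paper's argument: the paper also plugs $a_q(\lambda)=M_{-p}(Q_n)\simeq\sqrt{\ell_p/n}$ (with $q=mp$ and $u=cn$) into the Section~5 weighted inclusions, using Proposition~\ref{prop:cube-negative-moment-profile}, and makes the single choice $p=An\ln(en)/m$, for which $\ell_p\simeq L_m$ and $(n-1)/(mp-n+1)\ls C/\ln(en)$ hold directly, so no case split is needed. The paper likewise remarks that the statement is the case $k=n$ of Corollary~\ref{cor:projected-cube-profile}.
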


\begin{proof}
This is also the case $k=n$ of Corollary~\ref{cor:projected-cube-profile}; we retain the direct argument for the full-dimensional statement.
Since $p_{Q_n}=2\|\cdot\|_\infty$, Proposition~\ref{prop:cube-negative-moment-profile} gives
$$ M_{-p}(Q_n) \simeq\sqrt{\frac{\ln\left(e+\frac{n}{1+p}\right)}n}, \qquad M(Q_n)\simeq\sqrt{\frac{\ln(en)}n}, \qquad b(Q_n)=2. $$
Choose $p=A n\ln(en)/m$, with $A$ a sufficiently large absolute constant.
Then $mp\gr2n$ and
$$ M_{-p}(Q_n)\gr c\sqrt{\frac{L_m}{n}}. $$
Moreover, $(n-1)/(mp-n+1)\ls C/\ln(en)$, and hence
$$ \left(c r(Q_n)M_{-p}(Q_n)\right)^{\frac{n-1}{mp-n+1}}\gr c,\qquad e^{-cn/(mp-n+1)}\gr c. $$
Substitution gives \eqref{eq:cube-profile-distance}; the last assertion follows from $L_m\simeq\ln(en)$ in the stated range.
\end{proof}

Since $r(Q_n)=1/2$, Theorem~\ref{th:global-dvoretzky} alone gives bounded geometric distance only for $m\gr Cn$.
Thus Corollary~\ref{cor:cube-global-regularization} improves the number of rotations by the factor $\ln(en)$.

The following deterministic observation gives a lower estimate for every number of rotations.

\begin{proposition}\label{prop:rotation-average-obstruction}
Let $K\subseteq\R^n$ be a symmetric convex body containing the origin, let $m\gr1$, let $U_1,\ldots,U_m\in O(n)$ and put
$$ A_m=\frac1m\sum_{j=1}^mU_j^*(K^\circ). $$
Then
\begin{equation}\label{eq:rotation-average-obstruction}
 d_{\rm G}(A_m,B_2^n)\gr\max\left\{1,\frac{b(K)}{M(K)\sqrt m}\right\}.
\end{equation}
In particular, for the cube,
\begin{equation}\label{eq:cube-profile-lower}
 d_{\rm G}(A_m,B_2^n)\gr c\max\left\{1,\sqrt{\frac{n}{m\ln(en)}}\right\}.
\end{equation}
\end{proposition}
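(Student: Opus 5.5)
The bound $d_{\rm G}(A_m,B_2^n)\gr1$ holds trivially, so the work is in the second term. It rests on one observation: the support function of $A_m$ equals
$$ h_{A_m}(x)=\frac1m\sum_{j=1}^m\|U_jx\|_K . $$
Its spherical average is therefore exactly $M(K)$, for every choice of $U_j$. Its maximum, on the other hand, is at least $b(K)/m$, since some single term can be maximized. That only gives $b/(mM)$, so a sharper argument is needed to reach $b/(M\sqrt m)$. I would compare $\max h_{A_m}$ with $\min h_{A_m}$ through the inclusions $a^{-1}B_2^n\subseteq A_m\subseteq bB_2^n$. These force
$$ \min_{S^{n-1}}h_{A_m}\gr a^{-1},\qquad \max_{S^{n-1}}h_{A_m}\ls b, $$
and hence $ab\gr\max h_{A_m}/\min h_{A_m}$. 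The average $M(K)$ lies between the min and the max. So it suffices to show
$$ \max h_{A_m}\gr\frac{b(K)}{\sqrt m}, \qquad\text{or}\qquad \frac{\max h_{A_m}}{M(K)}\gr\frac{b(K)}{M(K)\sqrt m}, $$
using $\min h_{A_m}\ls M(K)$.

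To get $\max h_{A_m}\gr b/\sqrt m$, I would use an $L_2$ argument. For each $j$ let $\theta_j\in S^{n-1}$ satisfy $\|\theta_j\|_K=b$, and put $x_j=U_j^*\theta_j$. Since $K$ is symmetric, $\|U_jx\|_K\gr b|\langle U_jx,\theta_j'\rangle|$, where $\theta_j'$ is the outer normal direction: $\|y\|_K\gr h_{K^\circ}$-type duality gives $\|y\|_K\gr\langle y,z\rangle$ for $z\in K^\circ$, and $K^\circ$ contains $b\,\theta_j'$ for a suitable unit $\theta_j'$, because $R(K^\circ)=b$. Hence
$$ h_{A_m}(x)\gr\frac bm\sum_j|\langle x,v_j\rangle|,\qquad v_j=U_j^*\theta_j'. $$
Now take $x=\sum_j\varepsilon_jv_j/|\sum_j\varepsilon_jv_j|$, with signs chosen so that $|\sum_j\varepsilon_jv_j|\ls\sqrt m$; averaging over random signs shows such signs exist. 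Then
$$ \sum_j|\langle x,v_j\rangle|\gr\Big\langle x,\sum_j\varepsilon_jv_j\Big\rangle=\Big|\sum_j\varepsilon_jv_j\Big|. $$
This is the wrong direction: I need a lower bound on $|\sum_j\varepsilon_jv_j|$, so I would instead choose signs with $|\sum_j\varepsilon_jv_j|\gr\sqrt m$. Averaging gives $\E|\sum_j\varepsilon_jv_j|^2=m$, so such signs exist. This yields $\max h_{A_m}\gr b/\sqrt m$, which is the main step. Combining it with $\min h_{A_m}\ls M(K)$ gives \eqref{eq:rotation-average-obstruction}.

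For the cube, apply this with $K=Q_n$: $b(Q_n)=2$ and $M(Q_n)\simeq\sqrt{\ln(en)/n}$ by Proposition~\ref{prop:cube-negative-moment-profile} at $p=1$. This gives \eqref{eq:cube-profile-lower}. The only delicate point is choosing the sign vector correctly, so that its norm is at least $\sqrt m$.
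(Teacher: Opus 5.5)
Your argument is correct and is essentially the paper's proof, written through the support function: your points $\frac{b(K)}m\sum_j\varepsilon_jv_j$ are exactly the paper's $\frac1m\sum_j\varepsilon_jx_j\in A_m$ with $x_j\in U_j^*(K^\circ)$, $|x_j|=b(K)$; choosing signs with $|\sum_j\varepsilon_jv_j|\gr\sqrt m$ gives $R(A_m)\gr b(K)/\sqrt m$, and $r(A_m)=\min h_{A_m}\ls w(A_m)=M(K)$ finishes the proof. One small correction for the cube: you need the upper bound $M(Q_n)\ls C\sqrt{\ln(en)/n}$, which Proposition~\ref{prop:cube-negative-moment-profile} at $p=1$ does not supply (it controls the harmonic mean $M_{-1}$, which bounds $M$ only from below), but which follows from the standard estimates $\E\|G\|_\infty\simeq\sqrt{\ln(en)}$ and $\E|G|\simeq\sqrt n$ that the paper also uses.
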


\begin{proof}
For every $j$ choose $x_j\in U_j^*(K^\circ)$ with $|x_j|=b(K)=R(K^\circ)$.
Since $K^\circ$ is symmetric,
$$ \frac1m\sum_{j=1}^m\varepsilon_jx_j\in A_m $$
for every choice of signs $\varepsilon_j\in\{-1,1\}$.
On averaging over the signs,
$$ \E_\varepsilon\left|\sum_{j=1}^m\varepsilon_jx_j\right|^2=\sum_{j=1}^m|x_j|^2=mb(K)^2, $$
and hence $R(A_m)\gr b(K)/\sqrt m$.
On the other hand, rotation invariance and additivity of the mean width give $ w(A_m)=w(K^\circ)=M(K), $ so $r(A_m)\ls M(K)$.
Since $A_m$ is symmetric, $d_{\rm G}(A_m,B_2^n)=R(A_m)/r(A_m)$, and \eqref{eq:rotation-average-obstruction} follows.
The cube estimate follows from $b(Q_n)=2$ and $M(Q_n)\simeq\sqrt{\ln(en)/n}$.
\end{proof}

It follows that bounded geometric distance for the cube requires $m\gr cn/\ln(en)$.
The matching sufficiency also follows from the classical theorem of Litvak--Milman--Schechtman~\cite{Litvak-Milman-Schechtman-1997,Litvak-Milman-Schechtman-1998}, since $(b(Q_n)/M(Q_n))^2\simeq n/\ln(en)$.
Thus the additional content of Corollary~\ref{cor:cube-global-regularization} is the quantitative estimate \eqref{eq:cube-profile-distance} below this threshold.

We finally show that the dependence on the inradius in Theorem~\ref{th:global-dvoretzky} is optimal throughout its possible range.

\begin{proposition}\label{prop:product-cylinder-global-sharpness}
Let $n\gr2$ and let $1\ls s\ls n/2$ be an integer.
Put
$$ Z_{n,s}=\sqrt{s+2}\,B_2^s\times\sqrt{n-s+2}\,B_2^{n-s}, \qquad K_{n,s}=|Z_{n,s}|^{-1/n}Z_{n,s}. $$
Then $K_{n,s}$ is symmetric and isotropic, and
\begin{equation}\label{eq:product-cylinder-parameters}
 r(K_{n,s})\simeq\sqrt s, \qquad b(K_{n,s})\simeq\frac1{\sqrt s}, \qquad M(K_{n,s})\simeq\frac1{\sqrt n}.
\end{equation}
Consequently, for every $m\gr1$ and every $U_1,\ldots,U_m\in O(n)$,
\begin{equation}\label{eq:product-cylinder-lower}
 d_{\rm G}\left(\frac1m\sum_{j=1}^mU_j^*(K_{n,s}^\circ),B_2^n\right)\gr c\max\left\{1,\sqrt{\frac{n}{sm}}\right\}.
\end{equation}
If $m\gr k_0$ and the rotations are independent and Haar-distributed, then, with probability at least $1-e^{-cn}$,
\begin{equation}\label{eq:product-cylinder-two-sided}
 d_{\rm G}\left(\frac1m\sum_{j=1}^mU_j^*(K_{n,s}^\circ),B_2^n\right)\simeq\max\left\{1,\sqrt{\frac{n}{sm}}\right\}.
\end{equation}
In particular, the threshold $m\simeq n/r(K)^2$ in Theorem~\ref{th:global-dvoretzky} is optimal, up to absolute constants, throughout the possible range of the inradius.
\end{proposition}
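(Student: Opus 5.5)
The plan has three parts: check isotropy and compute the parameters in \eqref{eq:product-cylinder-parameters} directly; then derive \eqref{eq:product-cylinder-lower} from Proposition~\ref{prop:rotation-average-obstruction}; finally combine it with Theorem~\ref{th:global-dvoretzky}.

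\textbf{Isotropy.} Write $x=(y,z)$ with $y\in\R^s$, $z\in\R^{n-s}$. The body $Z_{n,s}$ is symmetric and invariant under $O(s)\times O(n-s)$. By this invariance the covariance is diagonal and constant on each block, and there are no cross terms between the blocks. For a ball $\rho B_2^d$ with uniform measure one has $\E y_i^2=\rho^2/(d+2)$. With $\rho^2=d+2$ this gives second moment $1$ in every coordinate of both factors. Since $Z_{n,s}$ is a product, the uniform measure is a product measure, so the covariance of $Z_{n,s}$ is the identity. Rescaling to volume one therefore gives an isotropic body.

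\textbf{Volume and inradius.} Next, $|Z_{n,s}|^{1/n}=\big(\omega_s(s+2)^{s/2}\omega_{n-s}(n-s+2)^{(n-s)/2}\big)^{1/n}$. Since $\omega_d^{1/d}\sqrt{d+2}\simeq1$, this quantity is $\simeq1$. The inradius of $Z_{n,s}$ is $\min\{\sqrt{s+2},\sqrt{n-s+2}\}=\sqrt{s+2}$ because $s\ls n/2$. Hence $r(K_{n,s})\simeq\sqrt s$ and $b(K_{n,s})=1/r(K_{n,s})\simeq1/\sqrt s$.

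\textbf{Mean of the gauge.} The gauge is $p_{Z}(y,z)=\max\{|y|/\sqrt{s+2},|z|/\sqrt{n-s+2}\}$. For $\theta$ uniform on $S^{n-1}$, with high probability $|\theta_y|\simeq\sqrt{s/n}$ and $|\theta_z|\simeq1$. More cleanly, pass to a Gaussian vector $G$: one gets $\E p_Z(G)\simeq1$, since $|G_y|/\sqrt{s+2}$ and $|G_z|/\sqrt{n-s+2}$ are both of order one in expectation. Dividing by $\E|G|\simeq\sqrt n$ gives $M(Z)\simeq1/\sqrt n$, and the volume rescaling changes this only by an absolute factor. This Gaussian comparison is the only step needing care; I would handle it with the lower bound $p_Z\gr|G_z|/\sqrt{n-s+2}$ and the upper bound $p_Z\ls|G|/\sqrt{s+2}\cdot{}$… no — rather, I would bound the maximum by the sum of the two terms and use $\E|G_y|\ls\sqrt s$ and $\E|G_z|\ls\sqrt{n-s}$.

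\textbf{Lower bound \eqref{eq:product-cylinder-lower}.} Proposition~\ref{prop:rotation-average-obstruction} gives
$$d_{\rm G}\ge\max\left\{1,\frac{b}{M\sqrt m}\right\},$$
and by the parameters above $b/M\simeq\sqrt{n/s}$. This holds deterministically for every choice of $U_1,\ldots,U_m$.

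\textbf{Two-sided estimate \eqref{eq:product-cylinder-two-sided}.} For the upper bound, apply Theorem~\ref{th:global-dvoretzky} to $K_{n,s}$, which is symmetric and isotropic. With probability at least $1-e^{-cn}$ it gives $C\max\{1,r^{-1}\sqrt{n/m}\}\simeq\max\{1,\sqrt{n/(sm)}\}$, since $r\simeq\sqrt s$. Together with the lower bound this proves \eqref{eq:product-cylinder-two-sided}.

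\textbf{Optimality of the threshold.} As $s$ ranges over $1,\ldots,\lfloor n/2\rfloor$, the inradius $r(K_{n,s})\simeq\sqrt s$ covers the whole range from order $1$ to order $\sqrt n$. For each such body the threshold $m\simeq n/s\simeq n/r^2$ is attained, which shows that the threshold in Theorem~\ref{th:global-dvoretzky} is optimal throughout.
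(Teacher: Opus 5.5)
Your proposal is correct and follows essentially the same route as the paper: isotropy from the identity covariance of the uniform measure on $\sqrt{d+2}\,B_2^d$, the gauge formula for $r$ and $b$, the bound of the maximum by the sum for $M$, and then Proposition~\ref{prop:rotation-average-obstruction} for the deterministic lower bound and Theorem~\ref{th:global-dvoretzky} for the upper bound. The differences are cosmetic: the paper estimates $M$ directly on the sphere, using the pointwise bound $p_{Z_{n,s}}\gr1/\sqrt{n+4}$ for the lower estimate, whereas you pass through a Gaussian vector; and in the write-up you should delete the abandoned bound $p_Z\ls|G|/\sqrt{s+2}$.
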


\begin{proof}
The uniform probability measure on $\sqrt{d+2}\,B_2^d$ has covariance matrix $I_d$.
It follows that the uniform probability measure on $Z_{n,s}$ has covariance matrix $I_n$, and hence its volume-one homothetic image $K_{n,s}$ is isotropic.
Moreover,
$$ |Z_{n,s}|^{1/n}=\left[\omega_s(s+2)^{s/2}\omega_{n-s}(n-s+2)^{(n-s)/2}\right]^{1/n}\simeq1. $$
Writing $\theta=(\theta',\theta'')\in S^{n-1}\subseteq\R^s\times\R^{n-s}$, we have
$$ p_{Z_{n,s}}(\theta)=\max\left\{\frac{|\theta'|}{\sqrt{s+2}},\frac{|\theta''|}{\sqrt{n-s+2}}\right\}. $$
This proves the first two estimates in \eqref{eq:product-cylinder-parameters}.
The same formula gives $p_{Z_{n,s}}(\theta)\gr1/\sqrt{n+4}$, while
$$ \int_{S^{n-1}}|\theta'|\,d\sigma(\theta)\ls\sqrt{\frac sn}, \qquad \int_{S^{n-1}}|\theta''|\,d\sigma(\theta)\ls\sqrt{\frac{n-s}{n}}. $$
Since the maximum is bounded by the sum, we obtain $M(K_{n,s})\simeq n^{-1/2}$.
Now \eqref{eq:product-cylinder-lower} follows from Proposition~\ref{prop:rotation-average-obstruction}, while the reverse estimate in \eqref{eq:product-cylinder-two-sided} follows from Theorem~\ref{th:global-dvoretzky}.
\end{proof}

\section{Volumes of random intersections}\label{section:7}
\bigskip

We finally turn to the proof of Theorem~\ref{th:unconditional-random-intersection-volume}.
The deterministic part is a consequence of polarity and the classical estimates for polytopes with few vertices.
The random upper estimate uses the complete radial profile of the cube.
We isolate the only estimate that is needed.

\begin{lemma}\label{lem:cube-random-intersection-tail}
There exist absolute constants $A,C>0$ such that the following holds.
Let $n,m\gr2$, put $\Lambda=\Lambda_{m,n}$ and
$$ a=A\sqrt{\frac n\Lambda},\qquad \alpha_n(t)=\sigma\left\{\theta\in S^{n-1}:\|\theta\|_\infty\ls\frac1{2t}\right\}. $$
Then
\begin{equation}\label{eq:cube-random-intersection-tail}
 \alpha_n(a)^{m-1}\ls\left(\frac C{\sqrt\Lambda}\right)^n.
\end{equation}
\end{lemma}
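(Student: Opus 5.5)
We plan to pass to Gaussian coordinates, bound $\alpha_n(a)$ by a product of one-dimensional small-ball probabilities, and then let the power $m-1$ overwhelm the target $(C/\sqrt\Lambda)^n$.

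\textbf{Gaussian representation.} Write $\theta=G/|G|$ with $G$ a standard Gaussian vector in $\R^n$. Put
$$ s=\frac1{2a}=\frac1{2A}\sqrt{\frac\Lambda n},\qquad u=2s\sqrt n=\frac{\sqrt\Lambda}A . $$
Splitting on the event $\{|G|\ls2\sqrt n\}$ gives
$$ \alpha_n(a)\ls\Pp\{\|G\|_\infty\ls u\}+\Pp\{|G|>2\sqrt n\} \ls\Pp\{|g_1|\ls u\}^n+e^{-c_0n}. $$
Since $1-x\ls e^{-x}$ and the Gaussian tail satisfies $\Pp\{|g_1|>u\}\gr c\,e^{-u^2/2}/(1+u)$, we get
$$ \Pp\{|g_1|\ls u\}^n\ls\exp\left(-c\,n\,\frac{e^{-\Lambda/(2A^2)}}{1+\sqrt\Lambda}\right). $$
Because $\Lambda\ls\ln(e+m)$, we have $e^{-\Lambda/(2A^2)}\gr(e+m)^{-1/(2A^2)}$. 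Together these give
$$ \alpha_n(a)\ls2\exp(-n\gamma),\qquad \gamma=c\min\left\{c_0,\frac{(e+m)^{-1/(2A^2)}}{1+\sqrt{\ln(e+m)}}\right\}. $$

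\textbf{Raising to the power $m-1$.} This gives
$$ \alpha_n(a)^{m-1}\ls\exp\big((m-1)\ln2-(m-1)n\gamma\big). $$
The goal is to check that this is at most $\exp(-n\ln(\sqrt\Lambda/C))$. Fix $A$ large, say $2A^2\gr4$. Then
$$ (m-1)\gamma\gr c\min\left\{(m-1),\ \frac{(m-1)(e+m)^{-1/4}}{\sqrt{\ln(e+m)}}\right\}\gr c'm^{1/2}\quad\text{for } m\gr2. $$
This dominates $\tfrac12\ln\Lambda\ls\tfrac12\ln\ln(e+m)$ by an arbitrarily large factor once $m$ exceeds an absolute constant. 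The bounded range of $m$ is handled by enlarging $C$, since then $\ln\Lambda$ is bounded and $(m-1)\gamma\gr c$.

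\textbf{Main obstacle: the prefactor $2^{m-1}$.} The stray factor $e^{(m-1)\ln 2}$ must be absorbed into $e^{-(m-1)n\gamma/2}$. This needs $n\gamma\gr2\ln2$, which fails when $n$ is small relative to $(e+m)^{1/(2A^2)}$. That regime falls into the case $\Lambda=n$, i.e. $m\gr e^n$. There I would argue more crudely: $\alpha_n(a)\ls1-\delta$ with $\delta\gr c\,e^{-n/(2A^2)}/\sqrt n$, directly from $\Pp\{|g_1|>u\}$ together with a constant-probability lower bound for $|G|\ls2\sqrt n$. This yields
$$ \alpha_n(a)^{m-1}\ls\exp(-(m-1)\delta)\ls\exp(-c\,e^{n(1-1/(2A^2))}/\sqrt n), $$
which is far below $(C/\sqrt n)^n$.

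For $m<e^n$ (so $\Lambda=\ln(e+m)\ls n+1$), we have $(e+m)^{1/(2A^2)}\ls e^{n/4}$. Hence $n\gamma\gr c\,n\,e^{-n/4}/\sqrt n$, which is bounded below only when $n$ is bounded. For bounded $n$ the claim is trivial by choosing $C$ large, since $\Lambda\ls n+1$. When $n$ is large, $n\gamma$ is large and the $2^{m-1}$ is absorbed. Balancing these two regimes carefully is the delicate point; the rest is routine.
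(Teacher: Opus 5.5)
Your Gaussian reduction matches the paper's, but the proof has a genuine gap, exactly where you flag the difficulty. Two moves create it: you replace $e^{-\Lambda/(2A^2)}$ by $(e+m)^{-1/(2A^2)}$, and you add the two terms as $\alpha_n(a)\ls2e^{-n\gamma}$. Together these produce the factor $2^{m-1}$, and absorbing it requires $n\gamma\gr2\ln2$.

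You claim this fails only when $\Lambda=n$, i.e. $m\gr e^n$. In fact $n\gamma\approx n(e+m)^{-1/(2A^2)}/\sqrt{\ln(e+m)}$ drops below $2\ln2$ once $m$ exceeds roughly $n^{2A^2}$. For example, for $m\approx e^{n/2}$ one gets $n\gamma\approx c\sqrt n\,e^{-n/(4A^2)}\to0$, so $2^{m-1}e^{-(m-1)n\gamma}\gr1$ and the bound is empty. Your last paragraph actually derives $n\gamma\gr c\sqrt n\,e^{-n/4}$, which is not bounded below, and then asserts that $n\gamma$ is large for large $n$; these two statements are incompatible. Since your crude estimate is invoked only for $m\gr e^n$, the whole range of roughly $n^{2A^2}\ls m<e^n$ is left uncovered.

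The missing observation, which the paper uses, is that $\Lambda\ls n$ gives
$$q=\Pp\{|g_1|>u\}\gr ce^{-\Lambda/(2A^2)}/\sqrt\Lambda\gr ce^{-n/(2A^2)}/\sqrt n.$$
Choosing $A$ with $1/(2A^2)<c_0$ makes $e^{-c_0n}$ negligible compared with $nq$ for large $n$. Hence $e^{-nq}+e^{-c_0n}\ls e^{-cnq}$ with no prefactor; bounded $n$ is trivial, since then $\Lambda$ is bounded. Raising to the power $m-1$ and using $m-1\gr ce^\Lambda$ gives
$$(m-1)nq\gr cn\,e^{(1-1/(2A^2))\Lambda}/\sqrt\Lambda,$$
which beats $\frac n2\ln\Lambda$ for large $\Lambda$.

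Alternatively, your own crude bound $\alpha_n(a)\ls1-\delta$ with $\delta\gr ce^{-\Lambda/(2A^2)}/\sqrt\Lambda$ also closes the gap, because it carries no prefactor. If you use it whenever $\Lambda\gr2\ln n$, then
$$(m-1)\delta\gr c\,n^{2-1/A^2}/\sqrt{\ln n}\gg n\ln n.$$
For $\Lambda<2\ln n$ one has $\Lambda=\ln(e+m)$, so $(e+m)^{1/(2A^2)}\ls n^{1/A^2}$, and there your $n\gamma$ really is large. When proving the bound on $\delta$, note that $\{|g_1|>u\}$ and $\{|G|\ls2\sqrt n\}$ are not independent. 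Condition on $u<|g_1|\ls u+1$ and bound $|(g_2,\ldots,g_n)|$ separately.
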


\begin{proof}
Let $G=(g_1,\ldots,g_n)$ be a standard Gaussian vector and write $G=R\Theta$, where $R=|G|$ and $\Theta$ is uniform on $S^{n-1}$ and independent of $R$.
Put
$$ s=\frac1{2a}=\frac{\sqrt\Lambda}{2A\sqrt n},\qquad x=2s\sqrt n=\frac{\sqrt\Lambda}{A}. $$
On the event $\{R\ls2\sqrt n\}$, the inequality $\|\Theta\|_\infty\ls s$ implies $\max_i|g_i|\ls x$.
Thus, if $q=\Pp\{|g_1|>x\}$, then
\begin{equation}\label{eq:cube-spherical-small-ball-gaussian}
 \alpha_n(a)\ls(1-q)^n+\Pp\{R>2\sqrt n\}\ls e^{-nq}+e^{-c_0n}
\end{equation}
for an absolute constant $c_0>0$.
The standard Gaussian tail estimate gives
\begin{equation}\label{eq:cube-spherical-small-ball-q}
 q\gr\frac{c}{1+x}e^{-x^2/2}\gr\frac{c}{\sqrt\Lambda}\exp\left(-\frac{\Lambda}{2A^2}\right).
\end{equation}
Choose $A$ sufficiently large and put $\eta=1/(2A^2)$.
Since $\Lambda\ls n$, after changing the absolute constants in the bounded range of $n$ we may combine \eqref{eq:cube-spherical-small-ball-gaussian} and \eqref{eq:cube-spherical-small-ball-q} to obtain
$$ \alpha_n(a)\ls e^{-cnq}. $$
For all sufficiently large $\Lambda$, the definition of $\Lambda$ gives $m-1\gr ce^\Lambda$.
Hence
$$ (m-1)nq\gr c n\frac{e^{(1-\eta)\Lambda}}{\sqrt\Lambda}\gr C_1n\ln\Lambda, $$
and the last quantity dominates $n\ln\Lambda$ for all sufficiently large $\Lambda$.
It follows that
$$ \alpha_n(a)^{m-1}\ls\Lambda^{-n}. $$
For bounded $\Lambda$ the assertion follows after increasing $C$.
\end{proof}

\begin{proof}[Proof of Theorem~\ref{th:unconditional-random-intersection-volume}]
We first prove the deterministic lower estimate.
By~\cite[Proposition~2.3]{Giannopoulos-Hartzoulaki-Tsolomitis-2005}, every unconditional isotropic convex body $K$ satisfies $K\supseteq cB_\infty^n$.
Since $Q_n=\frac12B_\infty^n$, it is enough to prove \eqref{eq:unconditional-random-intersection-lower} for $K_1=\cdots=K_m=Q_n$.
Put
$$ P_{m,n}=\conv\{\pm U_ie_j:1\ls i\ls m,\ 1\ls j\ls n\}. $$
Since $Q_n^\circ=2B_1^n$, polarity gives
\begin{equation}\label{eq:cube-random-intersection-polar}
 \left(\bigcap_{i=1}^mU_iQ_n\right)^\circ=2P_{m,n}.
\end{equation}
The body $P_{m,n}$ is contained in $B_2^n$ and has at most $2mn$ vertices.
The classical few-vertex estimates of Carl and Pajor~\cite{Carl-Pajor-1988} and B\'ar\'any and F\"uredi~\cite{Barany-Furedi-1988} imply
$$ \vrad(P_{m,n})\ls C\min\left\{1,\sqrt{\frac{\ln(e+m)}n}\right\}\ls C\sqrt{\frac{\Lambda_{m,n}}n}. $$
By the reverse Santal\'o inequality and \eqref{eq:cube-random-intersection-polar},
$$ \left|\bigcap_{i=1}^mU_iQ_n\right|^{1/n}\gr c\frac{\omega_n^{1/n}}{\vrad(P_{m,n})}\gr\frac c{\sqrt{\Lambda_{m,n}}}. $$
This proves \eqref{eq:unconditional-random-intersection-lower}.

We next prove \eqref{eq:cube-random-intersection-mean-volume}.
Since $\rho_{Q_n}(\theta)=(2\|\theta\|_\infty)^{-1}$, the exact radial-tail identity \eqref{eq:exact-radial-tail-identity} with $p=n$ gives
$$ \E\left|\bigcap_{i=1}^mU_iQ_n\right|=n\omega_n\int_0^\infty t^{n-1}\alpha_n(t)^m\,dt. $$
Let $a$ be as in Lemma~\ref{lem:cube-random-intersection-tail}.
Since $\alpha_n$ is non-increasing and $|Q_n|=1$,
\begin{align*}
 \E\left|\bigcap_{i=1}^mU_iQ_n\right|
 &\ls\omega_na^n+\alpha_n(a)^{m-1}n\omega_n\int_a^\infty t^{n-1}\alpha_n(t)\,dt\\
 &\ls\omega_na^n+\alpha_n(a)^{m-1}\\
 &\ls\left(\frac C{\sqrt{\Lambda_{m,n}}}\right)^n.
\end{align*}
This proves \eqref{eq:cube-random-intersection-mean-volume}.
Finally, Markov's inequality gives
$$ \Pp\left\{\left|\bigcap_{i=1}^mU_iQ_n\right|^{1/n}>\frac{A_0}{\sqrt{\Lambda_{m,n}}}\right\}\ls\left(\frac C{A_0}\right)^n. $$
Choosing $A_0$ sufficiently large and using the deterministic lower estimate proves \eqref{eq:cube-random-intersection-probability}.
\end{proof}

We finish by recording some more general consequences of the same point of view, without pursuing them further here.
For symmetric convex bodies $C_1,\ldots,C_m$ of volume one and every non-empty proper subset $S\subsetneq\{1,\ldots,m\}$, the exact radial-tail identity gives
\begin{equation}\label{eq:volume-upper-subset-prose}
 \E\left|\bigcap_{i=1}^mU_iC_i\right|\ls\min\left\{1,\omega_n\left(\frac{2}{\min_{i\in S}M(C_i)}\right)^n+\exp\left(-\sum_{i\in S}d(C_i)\right)\right\}.
\end{equation}
Indeed, one splits the radial integral at $2/\min_{i\in S}M(C_i)$, uses the definition of $d(C_i)$ on the tail factors corresponding to $i\in S$, and keeps one remaining radial factor in order to integrate the tail.
The Euclidean term in \eqref{eq:volume-upper-subset-prose} is necessary, as is seen from the volume-one Euclidean ball.
More generally, for $\overline B_p^n=|B_p^n|^{-1/n}B_p^n$ and $p>2$ one has $\sqrt n\,M(\overline B_p^n)\simeq\sqrt{\min\{p,\ln(en)\}}$; compare~\cite[Remark~4.7]{Brazitikos-Stavrakakis-2014}.
If $\sqrt n\,M(C_i)\gr B_0$ on a set of $r$ indices, \eqref{eq:volume-upper-subset-prose} gives an exponential estimate involving the sum of the corresponding $d(C_i)$'s, with one index omitted when $r=m$.
Since $d(C)\gr c\sqrt n$ for symmetric isotropic bodies by~\cite[Section~4]{Brazitikos-Stavrakakis-2014}, this yields
$$ \E\left|\bigcap_{i=1}^mU_iC_i\right|\ls C\exp\left[-c\min\left\{n,\min\{r,m-1\}\sqrt n\right\}\right] $$
whenever $r$ of the bodies satisfy the above non-Euclideanity condition.

There is also a general deterministic lower estimate which contains all positive spherical moments simultaneously.
If $K_1,\ldots,K_m$ contain the origin in their interiors, then for every $q>0$ and every $U_1,\ldots,U_m\in O(n)$,
\begin{equation}\label{eq:volume-intersection-all-orders-prose}
 \left|\bigcap_{i=1}^mU_iK_i\right|\gr\omega_n\left(\sum_{i=1}^mM_q(K_i)^q\right)^{-n/q}.
\end{equation}
This follows immediately from $p_{\cap_iU_iK_i}=\max_i p_{K_i}\circ U_i^*$, the power-mean inequality and polar integration.
For isotropic bodies, the standard spherical concentration inequality applied to the gauge gives $M_q(K)\ls M(K)+C\sqrt{q/n}$ for $1\ls q\ls n$.
Combining this with \eqref{eq:isotropic-spherical-means} and choosing $q\simeq\ln(em)$ in \eqref{eq:volume-intersection-all-orders-prose} gives
$$ \left|\bigcap_{i=1}^mU_iK_i\right|\gr\left(\frac c{\sqrt{\min\{n,\ln(enm)\}}}\right)^n. $$
For a fixed number of isotropic bodies one can do better and obtain $|\cap_iU_iK_i|\gr c^{n(m-1)}$.
One way to see this is to represent the volume of the intersection as the value at the origin of a convolution-type log-concave density, use Pr\'ekopa's theorem~\cite{Prekopa-1973} and Fradelizi's barycentric estimate~\cite{Fradelizi-1997}, and then apply the reverse Brunn--Minkowski estimate in isotropic $M$-position~\cite[Theorem~4.5 and Remark~4.6]{Giannopoulos-Paouris-Vritsiou-2014}.
We do not develop these extensions here, since Theorem~\ref{th:unconditional-random-intersection-volume} gives a complete sharp answer in the unconditional class.

\appendix
\section{Calculations for the cube and estimates for \texorpdfstring{$\mathfrak A_{n,d}(q)$}{A(n,d,q)}}\label{appendix:special-calculations}

We collect here the calculations for the cube, including the optimization in Corollary~\ref{cor:projected-cube-profile}, and the comparison of $\mathfrak A_{n,d}(d)$ with $\mathfrak A_{n,d}(d+1)$.

\begin{lemma}\label{lem:l1-spherical-width}
For $1\ls a\ls\sqrt n$,
$$ \omega\{\theta\in S^{n-1}:\|\theta\|_1\ls a\} \simeq a\sqrt{\ln\left(e+\frac n{a^2}\right)}. $$
\end{lemma}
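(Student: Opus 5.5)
We prove the two inequalities separately. Write $T_a=\{\theta\in S^{n-1}:\|\theta\|_1\ls a\}$ and $s=\lceil a^2\rceil\ls n$.

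\emph{Upper bound.} Every $\theta\in T_a$ satisfies $\|\theta\|_2=1$ and $\|\theta\|_1\ls a$. By the standard decomposition for $\ell_1$-balls (for instance, Lemma~3.4 of Plan--Vershynin type), this gives
$$T_a\subseteq 2\,\conv\{x:\ |{\rm supp}\,x|\ls s,\ |x|\ls1\}.$$
The proof of this inclusion is to sort the coordinates into consecutive blocks of size $s$. The first block has norm at most $1$. For each later block, its $\ell_2$-norm is at most $s^{-1/2}$ times the $\ell_1$-norm of the preceding block, so the later blocks together have total $\ell_2$-norm at most $a/\sqrt s\ls1$.

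Hence
$$\omega(T_a)\ls 2\,\E\max_{|I|=s}\Big(\sum_{i\in I}g_i^2\Big)^{1/2}.$$
The sum of the $s$ largest values of $g_i^2$ is at most $Cs\ln(en/s)$ in expectation. By Jensen's inequality this gives $\omega(T_a)\ls C\sqrt{s\ln(en/s)}\simeq a\sqrt{\ln(e+n/a^2)}$. Here we use $s\simeq a^2$ because $a\gr1$, and $\ln(en/s)\simeq\ln(e+n/a^2)$.

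\emph{Lower bound.} For a given $G$, let $I$ be the set of indices of the $s'=\lfloor a^2\rfloor\gr1$ largest values of $|g_i|$. Take
$$\theta=\frac{(\operatorname{sign}g_i\,\mathds 1_{i\in I})_i}{\sqrt{s'}}.$$
Then $\|\theta\|_1=\sqrt{s'}\ls a$, so $\theta\in T_a$, and
$$\langle G,\theta\rangle=\frac1{\sqrt{s'}}\sum_{i\in I}|g_i|\gr\sqrt{s'}\,g^*_{s'},$$
where $g^*_k$ denotes the $k$-th largest of the $|g_i|$.

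It remains to bound $\E g^*_{s'}\gr c\sqrt{\ln(en/s')}$. When $s'\ls n/2$, this follows from a binomial count: take $t$ with $\Pp\{|g_1|>t\}=2s'/n$, so that $t\simeq\sqrt{\ln(en/s')}$ when $s'\ls n/e^2$. The number of indices with $|g_i|>t$ is a binomial variable with mean $2s'$, and it is at least $s'$ with probability bounded below. When $s'$ is comparable to $n$, we have $\ln(e+n/a^2)\simeq1$, and it suffices to use $\E|g^*_{s'}|\gr c$, which holds since $s'\ls n$ and half the values of $|g_i|$ exceed the median of $|g_1|$ with constant probability. (If $s'>n/2$, first restrict to $s'=\lfloor n/2\rfloor$ and lose only a constant factor.)

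The two bounds together give the claimed equivalence on the full range $1\ls a\ls\sqrt n$.

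The only step that needs care is the order-statistic estimate across all regimes of $s'/n$. The block-decomposition inclusion is routine.
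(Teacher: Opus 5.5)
Your proof is correct and follows essentially the same route as the paper: the block decomposition $B_2^n\cap aB_1^n\subseteq 2\conv\bigcup_{|I|\ls m}B_2^I$ with $m\simeq a^2$ for the upper bound, and $m$-sparse unit vectors for the lower bound (you take the particular sign vector on the top $\lfloor a^2\rfloor$ coordinates). The only difference is that you verify the sparse Gaussian width estimate $\E\max_{|I|=m}|P_IG|\simeq\sqrt{m\ln(e+n/m)}$ by hand via order statistics, including the regimes of $s'$ comparable to $n$, whereas the paper cites it from Vershynin's book.
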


\begin{proof}
Let $m\simeq a^2$.
The set of $m$-sparse unit vectors is contained in $B_2^n\cap aB_1^n$, while the usual block decomposition gives
$$ B_2^n\cap aB_1^n\subseteq2\conv\bigcup_{|I|\ls m}B_2^I. $$
The assertion follows from the sparse Gaussian-width estimate
$$ \E\max_{|I|=m}|P_IG| \simeq\sqrt{m\ln(e+n/m)}; $$
see \cite[Exercise~9.27]{Vershynin-HDP-2}.
\end{proof}

\begin{proof}[Proof of Proposition~$\ref{prop:cube-localized-width}$]
Since $ h_{Q_n}(\theta)=\frac12\|\theta\|_1, $ we have
$$ T_t(Q_n)=\{\theta\in S^{n-1}:\|\theta\|_1\ls2t\sqrt n\}. $$
Apply Lemma~\ref{lem:l1-spherical-width} with $a=2t\sqrt n$.
If $t<1/(2\sqrt n)$, the assertion follows from $\|\theta\|_1\gr1$ on the sphere.
\end{proof}

\begin{proof}[Proof of Proposition~$\ref{prop:cube-dominated-projection-profile}$]
Since $h_K(\theta)\gr a\|\theta\|_1$, for every $t>0$,
$$ T_t(K) \subseteq\left\{\theta\in S^{n-1}: \|\theta\|_1\ls\frac{t\sqrt n}{a}\right\}. $$
Put
$$ L_d=\ln\frac{en}{d}, \qquad t=c_0a\sqrt{\frac d{nL_d}}, \qquad A=\frac{t\sqrt n}{a} =c_0\sqrt{\frac d{L_d}}. $$
If $A<1$, then $T_t(K)$ is empty.
Otherwise, Lemma~\ref{lem:l1-spherical-width} gives
$$ \omega_K(t) \ls CA\sqrt{\ln\left(e+\frac n{A^2}\right)} \ls Cc_0\sqrt{\frac d{L_d}} \sqrt{\ln\left(e+\frac{nL_d}{c_0^2d}\right)} \ls C'c_0\sqrt d. $$
Choose $c_0>0$ sufficiently small and apply Proposition~\ref{prop:localized-width-reduction}.
This gives
$$ r(P_FK)\gr ca\sqrt{\frac d{\ln(en/d)}} $$
with the stated probability.
Finally, $ K\supseteq aB_\infty^n\supseteq aB_2^n, $ so $r(P_FK)\gr a$ deterministically.
Combining the two estimates proves \eqref{eq:cube-dominated-projection-profile}.
\end{proof}

\begin{proof}[Proof of Proposition~$\ref{prop:cube-gelfand-obstruction}$]
Since $Q_n^\circ=2B_1^n$,
$$ R(Q_n^\circ\cap F) =2\sup\{|x|:x\in F,\ \|x\|_1\ls1\}. $$
The right-hand side is bounded below by twice the $d$-th Gelfand width of $B_1^n$ in $\ell_2^n$.
By~\cite[Theorem~1.1]{Foucart-Pajor-Rauhut-Ullrich-2010},
$$ c_d(B_1^n,\ell_2^n) \simeq\min\left\{1, \sqrt{\frac{1+\ln(n/d)}d}\right\}, $$
which is equivalent to the right-hand side of \eqref{eq:cube-gelfand-obstruction}.
Since the estimate holds for every $F$, it gives \eqref{eq:weak-profile-cube-lower} for every $q>0$.
\end{proof}

We next compare the two quantities $\mathfrak A_{n,d}(d)$ and $\mathfrak A_{n,d}(d+1)$.

\begin{theorem}\label{th:adjacent-weak-projection-scales}
For every $1\ls d<n$,
\begin{equation}\label{eq:adjacent-projection-exponents}
 \mathfrak A_{n,d}(d) \ls\mathfrak A_{n,d}(d+1) \ls C\mathfrak A_{n,d}(d).
\end{equation}
If $\mathfrak A_{n,d}=\mathfrak A_{n,d}(d)$, then
\begin{equation}\label{eq:weak-projection-profile-bounds}
 c\sqrt n\min\left\{1, \sqrt{\frac{\ln(en/d)}d}\right\} \ls\mathfrak A_{n,d} \ls C\min\left\{\sqrt n, \exp\left(\frac{C(n-d)}{d+1}\right)\right\}.
\end{equation}
\end{theorem}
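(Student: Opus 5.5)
The plan is to treat the two parts separately. The comparison \eqref{eq:adjacent-projection-exponents} is a property of weak $L_q$ quasi-norms of a bounded random variable. The profile bounds \eqref{eq:weak-projection-profile-bounds} come from results already proved: Proposition~\ref{prop:cube-gelfand-obstruction} for the lower bound, and Theorem~\ref{th:projection-tail} together with a deterministic inradius bound for the upper bound.

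\emph{First inequality in \eqref{eq:adjacent-projection-exponents}.} For a probability measure, $\Pp\{Y>s\}^{1/d}\ls\Pp\{Y>s\}^{1/(d+1)}$ because $\Pp\{Y>s\}\ls1$. Hence $\|Y\|_{L_{d,\infty}}\ls\|Y\|_{L_{d+1,\infty}}$ for each $K$, and taking suprema gives $\mathfrak A_{n,d}(d)\ls\mathfrak A_{n,d}(d+1)$.

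\emph{Second inequality in \eqref{eq:adjacent-projection-exponents}.} The point is that $Y_K$ is uniformly bounded in a way comparable to $\mathfrak A_{n,d}(d)$.
\begin{itemize}
\item Deterministically, $K\supseteq r(K)B_2^n$ with $r(K)\gr c$ (Lemma~\ref{lem:recentered-core} gives $r(K)\gr cL_K\gr c$). So $R(K^\circ\cap F)\ls C$, and $Y_K\ls C\sqrt n$.
\item By Proposition~\ref{prop:cube-gelfand-obstruction}, $\mathfrak A_{n,d}(d)\gr c\sqrt n\min\{1,\sqrt{\ln(en/d)/d}\}$.
\end{itemize}
This alone does not give $Y_K\ls C\mathfrak A_{n,d}(d)$ when $d$ is large, so I would instead interpolate. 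Put $A=\|Y_K\|_{L_{d,\infty}}$ and $B=C\sqrt n$. Then
$$\Pp\{Y_K>s\}\ls\min\{(A/s)^d,\mathds 1_{s<B}\}.$$
The quantity $s^{d+1}\Pp\{Y_K>s\}$ is at most $s\,A^d$, and for $s<B$ this is at most $BA^d$. This gives $\|Y_K\|_{L_{d+1,\infty}}\ls A^{d/(d+1)}B^{1/(d+1)}$. It then suffices to have $(B/A)^{1/(d+1)}\ls C$. Using the Gelfand lower bound, $B/A\ls C\sqrt{d/\ln(en/d)}\ls Cd$, and $(Cd)^{1/(d+1)}$ is bounded by an absolute constant.

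One gap remains. $A$ is the norm for the specific $K$, not the supremum, and for some bodies $A$ could be tiny, e.g.\ $Y_K\approx1$. In that case I would replace $A$ by $\mathfrak A_{n,d}(d)$ in the tail bound, which is legitimate since $\Pp\{Y_K>s\}\ls(\mathfrak A_{n,d}(d)/s)^d$ for every $K$. This yields $\|Y_K\|_{L_{d+1,\infty}}\ls\mathfrak A_{n,d}(d)^{d/(d+1)}B^{1/(d+1)}$, and the ratio $B/\mathfrak A_{n,d}(d)\ls Cd$ is controlled as above. The main obstacle is exactly this reduction: the boundedness of $Y_K$ must be compared with the supremum, not with the individual norm. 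The Gelfand-width lower bound supplies that comparison.

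\emph{Profile bounds \eqref{eq:weak-projection-profile-bounds}.}
\begin{itemize}
\item The lower bound is \eqref{eq:weak-profile-cube-lower} with $q=d$.
\item The bound $\mathfrak A_{n,d}\ls C\sqrt n$ is the deterministic bound $Y_K\ls C\sqrt n$.
\item For the exponential bound, Theorem~\ref{th:projection-tail} with $k=n-d$ and $\Delta=d+1$ states that $\Pp\{Y_K>C\exp(C(n-d)/(d+1))\tau\}\ls\tau^{-(d+1)}$ for $\tau\gr1$. This is the statement $\|Y_K\|_{L_{d+1,\infty}}\ls C\exp(C(n-d)/(d+1))$; the range $\tau<1$ is trivial after enlarging the constant.
\item Finally, the first inequality of \eqref{eq:adjacent-projection-exponents} transfers this bound from exponent $d+1$ to $\mathfrak A_{n,d}(d)$.
\end{itemize}
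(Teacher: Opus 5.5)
Your argument is correct and is essentially the paper's proof. It uses monotonicity of the weak quasi-norms in $q$; the interpolation $\|Y\|_{L_{d+1,\infty}}\ls A^{d/(d+1)}M^{1/(d+1)}$ with $A=\mathfrak A_{n,d}(d)$ and the deterministic bound $M=C\sqrt n$ from $r(K)\gr cL_K\gr c$; the Gelfand-width lower bound to keep $(M/A)^{1/(d+1)}$ bounded; and Theorem~\ref{th:projection-tail} with $k=n-d$, $\Delta=d+1$ for the upper profile bound. One harmless slip: the ratio is really $M/\mathfrak A_{n,d}(d)\ls C\max\{1,\sqrt{d/\ln(en/d)}\}\ls C\sqrt d$ rather than $C\sqrt{d/\ln(en/d)}$, which does not affect the conclusion.
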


\begin{proof}
Monotonicity gives the first inequality in \eqref{eq:adjacent-projection-exponents}.
Moreover, $r(K)\gr cL_K\gr c$, so
$$ Y_K(F)=\sqrt n\,R(K^\circ\cap F)\ls C\sqrt n $$
for every $F\in G_{n,n-d}$.
If a non-negative random variable $Y\ls M$ satisfies $\Pp\{Y>A\tau\}\ls\tau^{-d}$, then
$$ \|Y\|_{L_{d+1,\infty}}\ls A^{d/(d+1)}M^{1/(d+1)}. $$
Applying this with $Y=Y_K$ and using Proposition~\ref{prop:cube-gelfand-obstruction}, which gives $\mathfrak A_{n,d}(d)\gr c\sqrt{n/d}$, proves the second inequality in \eqref{eq:adjacent-projection-exponents}.
The lower bound in \eqref{eq:weak-projection-profile-bounds} is Proposition~\ref{prop:cube-gelfand-obstruction}; the upper bound follows from Theorem~\ref{th:projection-tail} and the deterministic estimate $Y_K\ls C\sqrt n$.
\end{proof}

Consequently, $\mathfrak A_{n,d}\simeq\sqrt n$ when $d\ls c\ln(en/d)$, while $\mathfrak A_{n,d}\simeq_\delta1$ when $d\gr\delta n$.

\begin{proof}[Proof of Proposition~$\ref{prop:cube-negative-moment-profile}$]
Let $G=(g_1,\ldots,g_n)$ be a standard Gaussian vector and write $ G=R\Theta$, $R=|G|, $ where $\Theta$ is uniform on $S^{n-1}$ and independent of $R$.
Put $ Z=\|G\|_\infty. $
We first assume $0<p\ls n/2$.
Since $Z=R\|\Theta\|_\infty$,
\begin{equation}\label{eq:cube-gaussian-factorization}
 M_{-p}(B_\infty^n) =(\E R^{-p})^{1/p}(\E Z^{-p})^{-1/p}.
\end{equation}
Since $R^2$ has the chi-square distribution with $n$ degrees of freedom,
$$ \E R^{-p}=2^{-p/2}\frac{\Gamma((n-p)/2)}{\Gamma(n/2)}. $$
The standard gamma-ratio estimates give
\begin{equation}\label{eq:negative-chi-moment}
 (\E R^{-p})^{1/p}\simeq\frac1{\sqrt n}, \qquad 0<p\ls n/2.
\end{equation}

We claim that
\begin{equation}\label{eq:gaussian-maximum-negative-moment}
 (\E Z^{-p})^{-1/p} \simeq\sqrt{\ln\left(e+\frac n{1+p}\right)}, \qquad 0<p\ls n/2.
\end{equation}
Let $ H(t)=\Pp\{|g_1|\ls t\}. $
For $1\ls p\ls n/2$ define $t_p$ by $ H(t_p)=e^{-p/n}. $
The two-sided Gaussian tail estimates imply
$$ t_p\simeq\sqrt{\ln\left(e+\frac np\right)}. $$
Since $\Pp\{Z\ls t_p\}=e^{-p}$ then $ \E Z^{-p}\gr e^{-p}t_p^{-p}, $ which gives $ (\E Z^{-p})^{-1/p}\ls et_p. $

For the reverse inequality, the variables $H(|g_i|)$ are independent and uniform on $[0,1]$.
Hence $ S=-n\ln H(Z) $ has the standard exponential distribution.
If $ f_n(s)=H^{-1}(e^{-s/n}), $ then
\begin{equation}\label{eq:maximum-exponential-representation}
 \E Z^{-p}=\int_0^\infty f_n(s)^{-p}e^{-s}\,ds.
\end{equation}
Uniform Gaussian estimates give
\begin{equation}\label{eq:fn-two-ranges}
 f_n(s)\simeq\sqrt{\ln\frac{en}{s}} \quad(0<s\ls n/2), \qquad f_n(s)\simeq e^{-s/n} \quad(s\gr n/2).
\end{equation}
Put $L=\ln(en/p)$ and $s_0=pe^{L/2}=\sqrt{enp}$.
On $0<s\ls p$ we have $f_n(s)\gr f_n(p)=t_p$.
On $ p\ls s\ls\min\{s_0,n/2\}, $ we have $f_n(s)\gr c\sqrt L\gr ct_p$.
If $s_0<n/2$, then the contribution of $[s_0,n/2]$ is at most $C^pe^{-s_0}$, which is bounded by $(C/t_p)^p$ because $s_0\gr cpL$.
Finally, by the second estimate in \eqref{eq:fn-two-ranges} and $p\ls n/2$,
$$ \int_{n/2}^\infty f_n(s)^{-p}e^{-s}\,ds \ls C^p\int_{n/2}^\infty e^{ps/n-s}\,ds \ls2C^pe^{-n/4} \ls(C/t_p)^p. $$
Together with \eqref{eq:maximum-exponential-representation}, this proves $ \E Z^{-p}\ls(C/t_p)^p. $
Thus \eqref{eq:gaussian-maximum-negative-moment} holds for $1\ls p\ls n/2$.
If $0<p<1$, monotonicity of power means gives $ (\E Z^{-1})^{-1}\ls(\E Z^{-p})^{-1/p}\ls\E Z, $ and both endpoints are comparable to $\sqrt{\ln(en)}$.

Combining \eqref{eq:cube-gaussian-factorization}, \eqref{eq:negative-chi-moment} and \eqref{eq:gaussian-maximum-negative-moment} proves \eqref{eq:cube-negative-moment-profile} for $p\ls n/2$.
If $p>n/2$, monotonicity and $ \min_{S^{n-1}}\|\theta\|_\infty=1/{\sqrt n} $ give
$$ \frac1{\sqrt n}\ls M_{-p}(B_\infty^n) \ls M_{-n/2}(B_\infty^n)\ls\frac C{\sqrt n}. $$
Since $\ln(e+n/(1+p))\simeq1$ in this range, the proof is complete.
\end{proof}

\begin{proof}[Proof of Corollary~$\ref{cor:projected-cube-profile}$]
Apply Theorem~\ref{th:projected-weighted-negative-profile} with $K_j=Q_n$, $\lambda_j=1/m$, $q=mp$ and $u=k$.
Since $p_{Q_n}=2\|\cdot\|_\infty$, Proposition~\ref{prop:cube-negative-moment-profile} gives
$$ a_q(\lambda)=M_{-p}(Q_n)\simeq\sqrt{\frac{\ell_p}{n}}, \qquad M(Q_n)\simeq\sqrt{\frac{\ln(en)}n}, \qquad b(Q_n)=2. $$
Moreover, $ \overline M_\lambda=M(Q_n)$ and $B_\lambda=2/{\sqrt m}. $
Combining the two inclusions in Theorem~\ref{th:projected-weighted-negative-profile} proves \eqref{eq:projected-cube-profile-general}.

For the second assertion choose $ p=({Ak\ln(en)})/m, $ where $A>0$ is a sufficiently large absolute constant.
Then $mp\gr2k$ and $\Delta_p\gr ck\ln(en)$.
Moreover,
$$ 1+\frac{Ak\ln(en)}m\simeq\max\left\{1,\frac{k\ln(en)}m\right\}, $$
and hence $\ell_p\simeq L_{m,k}$.
Since
$$ \frac{k-1}{\Delta_p}\ln\left(C\sqrt{\frac n{\ell_p}}\right)\ls C, \qquad \frac{k}{\Delta_p}\ls\frac C{\ln(en)}, $$
the last two factors in \eqref{eq:projected-cube-profile-general} are bounded by an absolute constant.
This proves \eqref{eq:projected-cube-profile-optimized}.
\end{proof}

\bigskip

\noindent {\bf Acknowledgements.} The first named author acknowledges support from a PhD scholarship of the National Technical University of Athens.
The second named author acknowledges support from the Hellenic Foundation for Research and Innovation (H.F.R.I.) under the ``4th Call for H.F.R.I. research projects to support Postdoctoral Researchers'' (Project Number: 28948).
The authors thank Apostolos Giannopoulos for useful discussions.

\bigskip

\noindent {\bf Keywords:} 
isotropic convex bodies, support functions, negative moments, random rotations, random projections, Dvoretzky theorem.

\smallskip

\noindent {\bf 2020 Mathematics Subject Classification:}
Primary 52A23; Secondary 46B06, 46B07, 52A22, 60D05.

\bigskip

\noindent \textsc{Antonios Hmadi}: School of Applied Mathematical and Physical Sciences, National Technical University of Athens, Department of Mathematics, Zografou Campus, GR-157 80, Athens, Greece.

\smallskip

\noindent \textit{E-mail:} \texttt{ahmadi@mail.ntua.gr}

\bigskip

\noindent \textsc{Dimitrios-Marios Liakopoulos}: School of Applied Mathematical and Physical Sciences, National Technical University of Athens, Department of Mathematics, Zografou Campus, GR-157 80, Athens, Greece.

\smallskip

\noindent \textit{E-mail:} \texttt{dm\_liakopoulos@mail.ntua.gr}

\end{document}